\numberwithin{equation}{section}
\definecolor{webgreen}{rgb}{0,.5,0}
\definecolor{webbrown}{rgb}{.6,0,0}
\newtheorem{theorem}{Theorem}
\newtheorem*{thm}{Theorem}
\newtheorem*{lemma}{Lemma}
\newtheorem{proposition}[theorem]{Proposition}
\newtheorem*{remark}{Remark}
\title{Fibonacci Identities via Fibonacci Functions}
\author[]{Kunle Adegoke \\\href{mailto:adegoke00gmail.com}{\tt adegoke00gmail.com}}
\affil{Department of Physics and Engineering Physics, \mbox{Obafemi Awolowo University}, 220005 Ile-Ife, Nigeria}
\begin{document}
\date{}

\maketitle

\begin{abstract}\noindent
We present a differential-calculus-based method which allows one to derive more identities from {\it any} given Fibonacci-Lucas identity containing a finite number of terms and having at least one free index. The method has two {\it independent} components. The first component allows new identities to be obtained directly from an existing identity while the second yields a generalization of the existing identity. The strength of the first component is that no additional information is required about the given original identity. We illustrate the method by providing new generalizations of some well-known identities such as d'Ocagne identity, Candido's identity, Gelin-Ces\`aro identity and Catalan's identity. The method readily extends to a generalized Fibonacci sequence.
\end{abstract}
\noindent 2010 {\it Mathematics Subject Classification}:
Primary 11B39; Secondary 11B37.

\noindent \emph{Keywords: }
Fibonacci number, Lucas number, gibonacci sequence, generalized Fibonacci sequence, Fibonacci function, summation identity, Gelin-Ces\`aro identity, Candido's identity, d'Ocagne identity.

\section{Introduction}
Let $F_j$ and $L_j$ be the $j$th Fibonacci and Lucas numbers, defined for all integers by
\begin{equation}\label{eq.ei9m9n1}
F_j  = \frac{{\alpha ^j  - \beta ^j }}{{\alpha  - \beta }}\,,\quad L_j  = \alpha ^j  + \beta ^j\,,
\end{equation}
where $\alpha=(1+\sqrt 5)/2$, the golden ratio, and $\beta=(1-\sqrt 5)/2=-1/\alpha$. Of course, $\alpha+\beta=1$, $\alpha\beta=-1$, $\alpha-\beta=\sqrt 5$. Let $(G_j)_{j\in\mathbb Z}$ be the gibonacci sequence having the same recurrence relation as the Fibonacci sequence but starting with arbitrary initial values; that is, let
\begin{equation*}
G_j  = G_{j - 1}  + G_{j - 2},\quad (j \ge 2),
\end{equation*}
with $G_0$ and $G_1$ arbitrary numbers (usually integers) not both zero; and
\begin{equation*}
G_{-j} = G_{-(j - 2)} - G_{-(j - 1)}.
\end{equation*} 
If, inspred by~\eqref{eq.ei9m9n1}, we introduce infinite times differentiable, complex-valued Fibonacci and Lucas functions, $f(x)$ and $l(x)$, defined by
\begin{equation}\label{eq.r280nsg}
f(x)=\frac{\alpha^x - \beta^x}{\alpha - \beta},\quad l(x)=\alpha^x + \beta^x,\quad x\in\mathbb R;
\end{equation}
then, clearly,
\begin{equation}
\left. f(x) \right|_{x = j\in\mathbb Z}  = F_j ,\quad\left. l(x) \right|_{x = j\in\mathbb Z}  = L_j;
\end{equation}
and we will show that (see ~\S\ref{sec.ts07vcf} and~\S\ref{sec.additional}):
\begin{equation}\label{eq.wyg8jr6}
\Re\left( {\left. {\frac{d}{{dx}}f(x)} \right|_{x = j\in\mathbb Z} } \right) = \frac{{L_j }}{{\sqrt 5 }}\,\ln \alpha ,\quad\Re\left( {\left. {\frac{d}{{dx}}l(x)} \right|_{x = j\in\mathbb Z} } \right) = F_j \sqrt 5\, \ln \alpha,
\end{equation}
and
\begin{equation}\label{eq.pagcd4g}
\Im\left( {\left. {\frac{d}{dx}f(x)} \right|_{x = j\in\mathbb Z} } \right) =  - \frac{{\pi \beta ^j }}{{\sqrt 5 }},\quad\Im\left( {\left. {\frac{d}{{dx}}l(x)} \right|_{x = j\in\mathbb Z} } \right) = \pi \beta ^j;
\end{equation}
where, here and throughout this paper, $\Re(X)$ or $\Re X$ denotes the real part of $X$ and $\Im(X)$ or $\Im X$ stands for the imaginary part of $X$. Many authors have studied various Fibonacci and Lucas functions in the past; we mention Halsey~\cite{halsey65}, Parker~\cite{parker68}, Spickerman~\cite{spickerman70}, Horadam and Shannon~\cite{horadam88} and Han et. al~\cite{hans12}. The main difference between the approach in this paper and that in previous work by other authors is that the latter focused on seeking real-valued Fibonacci and Lucas functions. It is, precisely, the complex-valued nature of the Fibonacci and Lucas functions defined in~\eqref{eq.r280nsg} and their derivatives that motivated the method developed in this paper.

\bigskip

Our goal is to present a two-component method, based on~\eqref{eq.r280nsg}--\eqref{eq.pagcd4g} and their extensions, which allows the discovery of more identities from any known Fibonacci-Lucas identity or any gibonacci identity having at least one free index; that is an index that is not being summed over.

\bigskip

To illustrate what we mean, consider the identity
\begin{equation}\label{eq.iigew21}
\sum_{j = 0}^{4n + 1} {( - 1)^{j - 1} \binom{4n + 1}jF_{j + k}^4 }  = 25^n \left( {F_{2n + k + 1}^4  - F_{2n + k}^4 } \right),
\end{equation}
derived, among other similar results, by Hoggatt and Bicknell~\cite{hoggatt64b}. This identity has a free index, $k$. Working only with the knowledge of~\eqref{eq.iigew21}, our method (first component) allows us to derive the following presumably new identity:
\begin{equation}\label{eq.f6y5qaq}
\sum_{j = 0}^{4n + 1} {( - 1)^{j - 1} \binom{4n + 1}jF_{j + k}^3 L_{j + k} }  = 25^n \left( {F_{2n + k + 1}^3 L_{2n + k + 1}  - F_{2n + k}^3 L_{2n + k} } \right);
\end{equation}
which, in turn, implies the identity
\begin{equation}\label{eq.jesqs9v}
\sum_{j = 0}^{4n + 1} {( - 1)^{j - 1} \binom{4n + 1}jF_{2j + k}^2 }  = 25^n F_{2(4n + k + 1)}.
\end{equation}
We are not done yet, as~\eqref{eq.jesqs9v} implies
\begin{equation}\label{eq.rtq2eat}
\sum_{j = 0}^{4n + 1} {( - 1)^{j - 1} \binom{4n + 1}jF_{4j + 2k} }  = 25^n L_{2(4n + k + 1)};
\end{equation}
which finally implies
\begin{equation}\label{eq.pk54e82}
\sum_{j = 0}^{4n + 1} {( - 1)^{j - 1} \binom{4n + 1}jL_{4j + 2k} }  = 5^{2n + 1} F_{2(4n + k + 1)}.
\end{equation}
Thus, the four identities~\eqref{eq.f6y5qaq},~\eqref{eq.jesqs9v},~\eqref{eq.rtq2eat} and~\eqref{eq.pk54e82} all follow from a knowledge of~\eqref{eq.iigew21}.

\bigskip

Our method (second component) provides a generalization of~\eqref{eq.iigew21} to
\begin{equation*}\tag{\ref{eq.p7si1ht}}
\sum_{j = 0}^{4n + 1} {( - 1)^{j - 1} \binom{4n + 1}jF_{j + k}^3 G_{j + r}} = 25^n \left( {F_{2n + k + 1}^3 G_{2n + r + 1} - F_{2n + k}^3 G_{2n + r}} \right).
\end{equation*}
A further generalization is derived in Proposition~\ref{prop.ri7x357} on page~\pageref{prop.ri7x357}.

\bigskip

As another example, consider the following well-known identity (see, for example, Hoggatt and Ruggles~\cite[Theorem 4]{hoggatt66}):
\begin{equation}\label{eq.k9bx5vw}
\tan ^{ - 1} \frac{1}{{F_{2k + 1} }} = \tan ^{ - 1} \frac{1}{{F_{2k} }} - \tan ^{ - 1} \frac{1}{{F_{2k + 2} }}.
\end{equation}
Our method (first component) shows that~\eqref{eq.k9bx5vw} implies the following apparently new identity:
\begin{equation*}
\frac{{L_{2k + 1} }}{{F_{2k + 1}^2  + 1}} = \frac{{L_{2k} }}{{F_{2k}^2  + 1}} - \frac{{L_{2k + 2} }}{{F_{2k + 2}^2  + 1}};
\end{equation*}
and the method (second component) yields a generalization:
\begin{equation*}
\frac{{G_{2k + r + 3}  + G_{2k + r + 1} }}{{F_{2k + 1} (F_{2k + 1}^2  + 1)}} - \frac{{G_{r + 1} }}{{F_{2k + 1}^2  + 1}} = \frac{{G_{r + 2} }}{{F_{2k}^2  + 1}} - \frac{{G_r }}{{F_{2k + 2}^2  + 1}}.
\end{equation*}
Yet another example, our method (first component) shows that the following identity of Howard~\cite[Corollary 3.5]{howard}:
\begin{equation}\label{eq.e715lgu}
F_sG_{k + r} + (-1)^{r - 1}F_{s - r}G_k = F_r G_{k + s},
\end{equation}
having three free indices $r$, $s$ and $k$, implies the following identities:
\begin{gather}
L_s G_{k + r}  + ( - 1)^{r - 1} L_{s - r} G_k  = F_r (G_{k + s + 1}  + G_{k + s - 1} ),\label{eq.nh7xbr5}\\
F_s \left( {G_{k + r + 1}  + G_{k + r - 1} } \right) + ( - 1)^r L_{s - r} G_k  = L_r G_{k + s}\label{eq.llqxumw},\\
L_s \left( {G_{k + r + 1}  + G_{k + r - 1} } \right) + ( - 1)^r 5F_{s - r} G_k  = L_r \left( {G_{k + s + 1}  + G_{k + s - 1} } \right).
\end{gather}
The method (second component) provides the following generalization of~\eqref{eq.e715lgu}:
\begin{equation}\label{eq.smzdu8d}
H_s G_{k + r}  + ( - 1)^{r - 1} H_{s - r} G_k  = F_r \left( {G_0 H_{k + s - 1} + G_1 H_{k + s} } \right);
\end{equation}
where here, and throughout this paper, $(H_{j})_{j\in\mathbb Z}$ is a gibonacci sequence with seeds $H_0$ and~$H_1$. Another generalization of~\eqref{eq.e715lgu} is given in Proposition~\ref{prop.o7pee2v} on page~\pageref{prop.o7pee2v}.

\bigskip

In section~\ref{sec.additional} we will apply the method (second component) to provide a generalization of Candido's identity:
\begin{equation*}
2\left( {F_k^4  + F_{k + 1}^4  + F_{k + 2}^4 } \right) = \left( {F_k^2  + F_{k + 1}^2  + F_{k + 2}^2 } \right)^2,
\end{equation*}
to the following:
\begin{equation*}\tag{\ref{eq.y80lcc4}}
\begin{split}
&2\left( {H_rG_k^3  + H_{r + 1} G_{k + 1}^3  + H_{r + 2} G_{k + 2}^3 } \right)\\
&\qquad = \left( {G_k^2  + G_{k + 1}^2  + G_{k + 2}^2 } \right)\left( {H_r G_k  + H_{r + 1} G_{k + 1}  + H_{r + 2} G_{k + 2} } \right),
\end{split}
\end{equation*}
with a further generalization given in Proposition~\ref{prop.wzey6tg} on page~\pageref{prop.wzey6tg}; a particular case of which is
\begin{equation*}
\begin{split}
&6\left( {F_k F_r F_s F_t  + F_{k + 1} F_{r + 1} F_{s + 1} F_{t + 1}  + F_{k + 2} F_{r + 2} F_{s + 2} F_{t + 2} } \right)\\
& = \left( {F_k F_s  + F_{k + 1} F_{s + 1}  + F_{k + 2} F_{s + 2} } \right)\left( {F_r F_t  + F_{r + 1} F_{t + 1}  + F_{r + 2} F_{t + 2} } \right)\\
&\quad + \left( {F_k F_r  + F_{k + 1} F_{r + 1}  + F_{k + 2} F_{r + 2} } \right)\left( {F_s F_t  + F_{s + 1} F_{t + 1}  + F_{s + 2} F_{t + 2} } \right)\\
&\quad + \left( {F_k F_t  + F_{k + 1} F_{t + 1}  + F_{k + 2} F_{t + 2} } \right)\left( {F_r F_s  + F_{r + 1} F_{s + 1}  + F_{r + 2} F_{s + 2} } \right).
\end{split}
\end{equation*}
Our method (second component) extends the d'Ocagne identity:
\begin{equation*}
F_{r + 1}F_k - F_rF_{k + 1}=(-1)^rF_{k - r},
\end{equation*}
to the gibonacci sequence as
\begin{equation*}
G_{r + 1}G_k - G_rG_{k + 1}=(-1)^r\left(G_1G_{k - r} - G_0G_{k - r + 1}\right);
\end{equation*} 
and extends the well-known formula for the sum of the squares of two consecutive Fibonacci numbers, namely,
\begin{equation*}
F_{k + 1}^2 + F_k^2 =F_{2k + 1},
\end{equation*}
to the gibonacci sequence as
\begin{equation*}
G_{k + 1}^2 + G_k^2 =G_0G_{2k} + G_1G_{2k + 1}.
\end{equation*}
We will also establish the following generalization of Catalan's identity:
\begin{equation*}\tag{\ref{eq.ghaetrs}}
F_{k + r} G_{k - r + s}  + F_{k - r} G_{k + r + s}  = 2F_k G_{k + s}  + ( - 1)^{k + r + 1} F_r^2 \left( {G_{s + 1}  + G_{s - 1} } \right),
\end{equation*}
and of the Gelin-Ces\`aro identity:
\begin{equation*}
\begin{split}
&H_{k + r - 2} G_{k - 1} G_{k + 1} G_{k + 2}  + G_{k - 2} H_{k + r - 1} G_{k + 1} G_{k + 2} \\
&\quad + G_{k - 2} G_{k - 1} H_{k + r + 1} G_{k + 2}  + G_{k - 2} G_{k - 1} G_{k + 1} H_{k + r + 2} \\
& = 4H_{k + r} G_k^3  - 2e_G G_0 \left( {H_{r + 2}  + H_r } \right) + 2e_G G_1 \left( {H_{r + 1}  + H_{r - 1} } \right),
\end{split}
\end{equation*}
The method (second component) extends the fundamental identity of Fibonacci and Lucas numbers,
\begin{equation*}
5F_k^2 - L_k^2 = (-1)^{k - 1}4,
\end{equation*}
to the gibonacci sequence as
\begin{equation*}
5G_k^2  - \left( {G_{k + 1}  + G_{k - 1} } \right)^2  = ( - 1)^k 4e_G;
\end{equation*}
and offers an extension of the triple-angle formula of Lucas
\begin{equation*}
F_{3k}=F_{k + 1}^3 + F_k^3 - F_{k - 1}^3,
\end{equation*}
to
\begin{equation*}
G_0^2 G_{3k - 2}  + 2G_0 G_1 G_{3k - 1}  + G_1^2 G_{3k}  = G_{k + 1}^3  + G_k^3  - G_{k - 1}^3.
\end{equation*}
Consider the generalized Fibonacci sequence $(W_j) = \left(W_j(W_0,W_1; p)\right)$ defined, for all integers and arbitrary real numbers $W_0$, $W_1$ and $p\ne 0$, by the recurrence relation
\begin{equation}\label{eq.k54ys0s}
W_j  = pW_{j - 1}  + W_{j - 2}, \quad  j \ge 2,
\end{equation}
with $W_{ - j}  =W_{-j + 2} - pW_{-j + 1}$.

\bigskip

Two important cases of  $(W_j)$ are the special Lucas sequences of the first kind, $(U_j(p))=(W_j(0,1;p))$, and of the second kind, $(V_j(p))=(W_j(2,p;p))$; so that 
\begin{equation}\label{eq.s6z1bcx}
U_0  = 0,\,U_1  = 1,\quad U_j  = pU_{j - 1}  + U_{j - 2}, \quad  j \ge 2,
\end{equation}
and
\begin{equation}\label{eq.zq47g0a}
V_0  = 2,\,V_1  = p,\quad V_j  = pV_{j - 1}  + V_{j - 2}, \quad  j \ge 2,
\end{equation}
with $U_{ - j}  =U_{-j + 2} - pU_{-j + 1}$ and $V_{ - j}  =V_{-j + 2} - pV_{-j + 1}$.

\bigskip 

We will show that the new method also applies to the generalized Fibonacci sequence. For example, the method (first component) shows that the identity~\cite[Equation (3.14)]{horadam65}:
\begin{equation}
U_r W_{k + 1}  + U_{r - 1} W_k  = W_{k + r}
\end{equation}
implies
\begin{equation}
V_r W_{k + 1}  + V_{r - 1} W_k  = W_{k + r + 1}  + W_{k + r - 1}\label{eq.o0dg2ly}.
\end{equation}
The new method presented in this paper provides some illumination on some observations noted by researchers (for example Long~\cite{long86}, Dresel~\cite{dresel93} and Melham~\cite{melham99}). 

\bigskip

The rest of the paper is arranged as follows. In section~\ref{sec.method} we describe the method (first component) and give examples. We cast about for identities to apply the method (first component) in section~\ref{sec.nglclif}. Further justification of the method (first component) is addressed in section~\ref{sec.justification}. A description of the method (second component), with examples including various extensions and generalizations of some known identities, is presented in section~\ref{sec.additional}. Finally, an extension of the method (second component) to the general second order (Horadam) sequence is offered in section~\ref{sec.horadam}.
\section{The method, first component}\label{sec.method}
Delaying further rigorous justification to section \ref{sec.justification}, we present the method (first component) and give examples. 

\bigskip

Here then is how to obtain more identities from any given Fibonacci-Lucas identity having a free index:
\begin{enumerate}
\item \label{step1} Let $k$ be a free index in the known identity. Replace each Fibonacci number, say $F_{h(k,\cdots)}$, with a certain differentiable function of $k$, namely, $f(h(k,\cdots))$, with $k$ now considered a variable; and replace each Lucas number, say $L_{h(k,\cdots)}$, with a certain differentiable function $l(h(k,\cdots))$. The subscript $h$ will be considered a function of several variables; that is variable $k$ and other parameters (if any) indicated by ellipses $\cdots$. The explicit form of $f(h(k,\cdots))$ or $l(h(k,\cdots))$ will not enter into play.
\item\label{step2} By applying the usual rules of calculus, differentiate, with respect to $k$, through the identity obtained in step~\ref{step1}.
\item\label{step3} Simplify the equation obtained in step~\ref{step2} and take the real part of the whole expression/equation, using also the following prescription:
\begin{quote}
\begin{align}
&\Re \left(f(h(k,\cdots))Y\right)=F_{h(k,\cdots)}\Re Y,\label{eq.dvyw1rc}\\
&\Re \left(l(h(k,\cdots))Y\right)=L_{h(k,\cdots)}\Re Y,\label{eq.pdu0l2a}\\
&\Re\frac{\partial }{{\partial k}}f(h(k, \cdots )) = \frac{L_{h(k,\cdots)}}{{\sqrt 5 }}\ln\alpha,\label{eq.wjannba}\\
&\Re\frac{\partial }{{\partial k}}l(h(k, \cdots )) = F_{h(k,\cdots)}\sqrt 5 \ln\alpha\label{eq.wpu2vfi}.
\end{align}
\end{quote}

\end{enumerate}
\begin{remark}
Formally, the method (first component) of obtaining new identities from a known Fibonacci-Lucas identity proceeds in two quick steps:
\begin{enumerate}
\item[i]\label{rule1} Treat the subscripts of Fibonacci and Lucas numbers as variables and differentiate through the given identity, with respect to the free index of interest, using the rules of differential calculus.
\item[ii]\label{rule2} Make the following replacements:
\begin{gather}
\frac{\partial }{{\partial k}}F_{h(k, \cdots )}  \to \frac{{L_{h(k, \cdots )} }}{{\sqrt 5 }}\ln\alpha\frac{\partial }{{\partial k}}h(k, \cdots ),\label{eq.v15rnmx}\\
\frac{\partial }{{\partial k}}L_{h(k, \cdots )}  \to F_{h(k, \cdots )} \sqrt 5\, \ln\alpha\frac{\partial }{{\partial k}}h(k, \cdots )\label{eq.rws86w7},\\
i\to 0;
\end{gather}
where $i=\sqrt{-1}$ is the imaginary unit.
\end{enumerate}
For example, given the double-angle identity:
\begin{equation*}
F_{2k}=L_kF_k,
\end{equation*}
we have, by step~i,
\begin{equation*}
\frac{d}{{dk}}F_{2k}  = \frac{d}{{dk}}\left(L_kF_k\right)=L_k \frac{d}{{dk}}F_k  + F_k \frac{d}{{dk}}L_k;
\end{equation*}
so that, by step~ii, using~\eqref{eq.v15rnmx} and~\eqref{eq.rws86w7},  we get
\begin{equation*}
\frac{{L_{2k} }}{{\sqrt 5 }} \times \ln\alpha\times\frac{d}{{dk}}(2k) = L_k  \times \frac{{L_k }}{{\sqrt 5 }} \times \ln\alpha + F_k  \times F_k \sqrt 5\times \ln\alpha; 
\end{equation*}
and hence,
\begin{equation}
2L_{2k}  = L_k^2  + 5F_k^2.
\end{equation}

\end{remark}
\subsection{Examples}
We illustrate the method (first component) with some examples from familiar identities.
\subsection{Example from a connecting formula between Fibonacci and Lucas numbers}
In this example we show that 
\begin{equation*}
L_k=F_{k + 1} + F_{k - 1}\implies 5F_k=L_{k + 1} + L_{k - 1}.
\end{equation*}
Following step~\ref{step1} we write
\begin{equation}
l(k)=f(k + 1) + f(k - 1)
\end{equation}
and (step~\ref{step2}) differentiate with respect to $k$, obtaining
\begin{equation*}
\frac{d}{{dk}}l(k) = \frac{d}{{dk}}f(k + 1) + \frac{d}{{dk}}f(k - 1).
\end{equation*}
Step~\ref{step3} now gives
\begin{equation*}
\Re\frac{d}{{dk}}l(k) = \Re\frac{d}{{dk}}f(k + 1) + \Re\frac{d}{{dk}}f(k - 1);
\end{equation*}
and by~\eqref{eq.wjannba} and~\eqref{eq.wpu2vfi},
\begin{equation*}
F_k \sqrt 5\,\ln\alpha  = \frac{{L_{k + 1} }}{{\sqrt 5 }}\,\ln\alpha + \frac{{L_{k - 1} }}{{\sqrt 5 }}\,\ln\alpha;
\end{equation*}
that is
\begin{equation*}
5F_k=L_{k + 1} + L_{k - 1}.
\end{equation*}
\subsubsection{Example from the double-angle identity of Fibonacci and Lucas numbers}
In this example we demonstrate that:
\begin{equation}\label{eq.bcj8yuc}
F_{2k}  = L_k F_k  \implies 2L_{2k}  = L_k^2  + 5F_k^2.
\end{equation}
For the identity $F_{2k}  = L_k F_k$, step~\ref{step1} is
\begin{equation*}
f(2k)=l(k)f(k);
\end{equation*}
where $k$ is now considered a variable.

\bigskip

Following step~\ref{step2}, we differentiate with respect to $k$ to obtain
\begin{equation*}
2\frac{d}{{dk}}f(2k) = l(k)\frac{d}{{dk}}f(k) + f(k)\frac{d}{{dk}}l(k).
\end{equation*}
Step~\ref{step3} gives
\begin{equation*}
2\Re\frac{d}{{dk}}f(2k) = L_k\Re\frac{d}{{dk}}f(k) + F_k\Re\frac{d}{{dk}}l(k).
\end{equation*}
Thus, using~\eqref{eq.wjannba} and~\eqref{eq.wpu2vfi}, we have
\begin{equation*}
2\frac{{L_{2k}}}{{\sqrt 5 }} \ln\alpha= L_k\frac{L_k}{{\sqrt 5 }}\ln\alpha + F_k\sqrt 5 F_k\ln\alpha;
\end{equation*}
which, dropping $\ln\alpha$ and multiplying through by $\sqrt 5$, is
\begin{equation*}
2L_{2k}  = L_k^2  + 5F_k^2.
\end{equation*}
The interested reader may wish to verify that
\begin{equation*}
2L_{2k}  = L_k^2  + 5F_k^2\implies F_{2k}  = L_k F_k .
\end{equation*}
\subsubsection{Example from the multiplication formula of Fibonacci and Lucas numbers}
Here we show that the multiplication formula
\begin{equation*}
F_{k + m}+(-1)^mF_{k - m}=L_mF_k
\end{equation*}
implies
\begin{equation}\label{eq.yma04av}
L_{k + m}  + ( - 1)^m L_{k - m}  = L_m L_k
\end{equation}
and
\begin{equation}\label{eq.cgx28m3}
L_{k + m}  - ( - 1)^m L_{k - m}  = 5F_m F_k.
\end{equation}
We write
\begin{equation}\label{eq.tr4f7bs}
f(k + m) + ( - 1)^m f(k - m) = l(m)f(k);
\end{equation}
so that, treating $k$ as the free index of interest gives
\begin{equation*}
\frac{\partial }{{\partial k}}f(k + m) + ( - 1)^m \frac{\partial }{{\partial k}}f(k - m) = l(m)\frac{\partial }{{\partial k}}f(k).
\end{equation*}
Thus,
\begin{equation*}
\Re\frac{\partial }{{\partial k}}f(k + m) + ( - 1)^m \Re\frac{\partial }{{\partial k}}f(k - m) = l(m)\Re\frac{\partial }{{\partial k}}f(k);
\end{equation*}
and hence, by step~\ref{step3},
\begin{equation*}
\frac{{L_{k + m}}}{{\sqrt 5 }}\ln \alpha  + ( - 1)^m \frac{{L_{k - m}}}{{\sqrt 5 }}\ln \alpha  = L_m\frac{L_k}{{\sqrt 5 }}\ln \alpha;
\end{equation*}
from which we get~\eqref{eq.yma04av}.

\bigskip

Taking $m$ as the index of interest and differentiating~\eqref{eq.tr4f7bs} with respect to $m$ yields
\begin{equation*}
\frac{\partial }{{\partial m}}f(k + m) - ( - 1)^m \frac{\partial }{{\partial m}}f(k - m) + ( - 1)^m i\pi f(k - m) = f(k)\frac{\partial }{{\partial m}}l(m),
\end{equation*}
so that
\begin{equation*}
\Re\frac{\partial }{{\partial m}}f(k + m) - ( - 1)^m \Re\frac{\partial }{{\partial m}}f(k - m)= F_k\Re\frac{\partial }{{\partial m}}l(m);
\end{equation*}
and hence
\begin{equation*}
\frac{{L_{k + m}}}{{\sqrt 5 }}\ln \alpha  - ( - 1)^m \frac{{L_{k - m}}}{{\sqrt 5 }}\ln \alpha  = F_kF_m\sqrt 5 \ln \alpha ;
\end{equation*}
from which~\eqref{eq.cgx28m3} follows.

\bigskip

The reader may verify that the remaining multiplication formula can be discovered by differentiating~\eqref{eq.yma04av} with respect to $m$.
\subsubsection{Example from an inverse tangent Fibonacci number identity}
Consider the following identity:
\begin{equation}\label{eq.g1t5uad}
\tan ^{ - 1} \frac{{F_{2m} }}{{F_{2k + 2m - 1} }} = \tan ^{ - 1} \frac{{L_m }}{{L_{2k + m - 1} }} - \tan ^{ - 1} \frac{{L_m }}{{L_{2k + 3m - 1} }},\quad\text{$m$ even},
\end{equation}
which can be derived using the inverse tangent addition formula and basic Fibonacci-Lucas identities.

\bigskip

We now demonstrate that~\eqref{eq.g1t5uad} implies
\begin{equation}\label{eq.nmx3acj}
\frac{1}{5}\frac{{F_{2m} L_{2k + 2m - 1} }}{{F_{2k + 2m - 1}^2  + F_{2m}^2 }} = \frac{{L_m F_{2k + m - 1} }}{{L_{2k + m - 1}^2  + L_{m}^2 }} - \frac{{L_m F_{2k + 3m - 1} }}{{L_{2k + 3m - 1}^2  + L_m^2 }},\quad\text{$m$ even}.
\end{equation}
We treat $k$ as the free index of interest. Step~\ref{step1} gives
\begin{equation}
\tan ^{ - 1} \frac{{f(2m)}}{{f(2k + 2m - 1)}} = \tan ^{ - 1} \frac{{l(m)}}{{l(2k + m - 1)}} - \tan ^{ - 1} \frac{{l(m)}}{{l(2k + 3m - 1)}};
\end{equation}
so that step~\ref{step2} yields
\begin{equation}
\begin{split}
&\frac{{2f(2m)}}{{f(2k + 2m - 1)^2  + f(2m)^2 }}\frac{\partial}{{\partial k}}f(2k + 2m - 1)\\
&\qquad = \frac{{2l(m)}}{{l(2k + m - 1)^2  + l(m)^2 }}\frac{\partial}{{\partial k}}l(2k + m - 1)\\
&\qquad\quad - \frac{{2l(m)}}{{l(2k + 3m - 1)^2  + l(m)^2 }}\frac{\partial}{{\partial k}}l(2k + 3m - 1),
\end{split}
\end{equation}
whence taking real part and replacing the derivatives using~\eqref{eq.wjannba} and~\eqref{eq.wpu2vfi} gives~\eqref{eq.nmx3acj}.

By treating $m$ as the free index, the interested reader can verify, using our method, that~\eqref{eq.g1t5uad} also implies
\begin{equation}
\frac{2}{5}\frac{{F_{2k - 1} }}{{F_{2k + 2m - 1}^2  + F_{2m}^2 }} =  - \frac{{F_{2k - 1} }}{{L_{2k + m - 1}^2  + L_m^2 }} + \frac{{F_{2k + 3m - 1} L_m  + F_{2k + 2m - 1} }}{{L_{2k + 3m - 1}^2  + L_m^2 }},\quad\text{$m$ even}.
\end{equation}
\subsection{Extension to a generalized Fibonacci sequence}\label{sec.h7yipve}
We now describe how the method (first component) for obtaining new identities from existing ones works for the generalized Fibonacci sequence $\left(W_j(W_0,W_1; p)\right)$ whose terms are given in~\eqref{eq.k54ys0s}. The scheme is the following.
\begin{enumerate}
\item \label{step1g} Let $k$ be a free index in the known identity. Replace each generalized Fibonacci number, say $W_{h(k,\cdots)}$, with a certain differentiable function of $k$, namely, $w(h(k,\cdots))$, with $k$ now considered a variable.
\item\label{step2g} By applying the usual rules of calculus, differentiate, with respect to $k$, through the identity obtained in step~\ref{step1g}.
\item\label{step3g} Simplify the equation obtained in step~\ref{step2g} and take the real part, using also the following prescription:
\begin{quote}
\begin{align}
&\Re \left(w(h(k,\cdots))Y\right)=W_{h(k,\cdots)}\Re Y,\label{eq.ri3mwyt}\\
&\Re\frac{\partial }{{\partial k}}w(h(k, \cdots )) = \frac{W_{h(k + 1,\cdots)} + W_{h(k - 1,\cdots)}}{\Delta}\ln\tau\label{eq.q9r3ffo};
\end{align}
where $\tau=(p + \Delta)/2$ and $\Delta=\sqrt {p^2 + 4} $.
\end{quote}

\end{enumerate}
Note that, on account of~\eqref{eq.cajhusn} and~\eqref{eq.c70545l}, for the special Lucas sequences,~\eqref{eq.ri3mwyt} and~\eqref{eq.q9r3ffo} reduce to 
\begin{align}
&\Re \left(u(h(k,\cdots))Y\right)=U_{h(k,\cdots)}\Re Y,\label{eq.uv52rhd}\\
&\Re\frac{\partial }{{\partial k}}u(h(k, \cdots )) = \frac{V_{h(k,\cdots)} }{\Delta }\ln\tau\label{eq.tnsd6nm}
\end{align}
and
\begin{align}
&\Re \left(v(h(k,\cdots))Y\right)=V_{h(k,\cdots)}\Re Y,\label{eq.w148eak}\\
&\Re\frac{\partial }{{\partial k}}v(h(k, \cdots )) = U_{h(k,\cdots)}\Delta \ln\tau\label{eq.gwl4lx2};
\end{align}
of which the Fibonacci and Lucas relations~\eqref{eq.dvyw1rc}--\eqref{eq.wpu2vfi} are particular cases.

\bigskip

For the gibonacci sequence,~\eqref{eq.ri3mwyt} and~\eqref{eq.q9r3ffo} reduce to
\begin{align}
&\Re \left(g(h(k,\cdots))Y\right)=G_{h(k,\cdots)}\Re Y,\label{eq.h54si8d}\\
&\Re\frac{\partial }{{\partial k}}g(h(k, \cdots )) = \frac{G_{h(k + 1,\cdots)} + G_{h(k - 1,\cdots)}}{\sqrt 5}\ln\alpha\label{eq.jp0f8zj}.
\end{align}
\subsection{More examples}
We give further examples involving the gibonacci sequence and the generalized Fibonacci sequence.
\subsubsection{Examples from an identity of Howard}
Consider the following identity, derived by Howard~\cite[Corollary 3.5]{howard}:
\begin{equation*}\tag{\ref{eq.e715lgu}}
F_sG_{k + r} + (-1)^{r - 1}F_{s - r}G_k = F_r G_{k + s},
\end{equation*}
Identity~\eqref{eq.e715lgu} has three free indices $r$, $s$ and $k$.

\bigskip

We write
\begin{equation}\label{eq.zgdv7h6}
f(s)g(k + r) + (-1)^{r - 1}f(s-r)g(k)=f(r)g(k+s).
\end{equation}
Treating $s$ as the index of interest and differentiating~\eqref{eq.zgdv7h6} with respect to $s$ gives
\begin{equation}\label{eq.vig7d8s}
g(k + r)\frac{d}{{ds}}f(s) + ( - 1)^{r - 1} g(k)\frac{\partial }{{\partial s}}f(s - r) = f(r)\frac{\partial }{{\partial s}}g(k + s);
\end{equation}
so that, taking the real part, we get
\begin{equation*}
G_{k + r}\Re\frac{d}{{ds}}f(s) + ( - 1)^{r - 1} G_k\Re\frac{\partial }{{\partial s}}f(s - r) = F_r\Re\frac{\partial }{{\partial s}}g(k + s).
\end{equation*}
We now use~\eqref{eq.wjannba} to replace the derivatives on the left hand side and~\eqref{eq.jp0f8zj} to replace the derivative on the right hand side, obtaining
\begin{equation*}\tag{\ref{eq.nh7xbr5}}
L_s G_{k + r}  + ( - 1)^{r - 1} L_{s - r} G_k  = F_r (G_{k + s + 1}  + G_{k + s - 1} ).
\end{equation*}
On the other hand, treating $r$ as the index of interest and differentiating~\eqref{eq.zgdv7h6} with respect to $r$ yields
\begin{equation}\label{eq.dzcinsk}
\begin{split}
&f(s)\frac{\partial }{{\partial r}}g(k + r) + ( - 1)^{r - 1} i\pi f(s - r)g(k) - ( - 1)^{r - 1} g(k)\frac{\partial }{{\partial r}}f(s - r)\\
&\qquad = g(k + s)\frac{d}{{dr}}f(r);
\end{split}
\end{equation}
so that, taking real part,
\begin{equation*}
\begin{split}
&F_s\Re\frac{\partial }{{\partial r}}g(k + r) - ( - 1)^{r - 1} G_k\Re\frac{\partial }{{\partial r}}f(s - r) = G_{k + s}\Re\frac{d}{{dr}}f(r).
\end{split}
\end{equation*}
Use of~\eqref{eq.jp0f8zj} and~\eqref{eq.wjannba} finally gives
\begin{equation*}\tag{\ref{eq.llqxumw}}
F_s \left( {G_{k + r + 1}  + G_{k + r - 1} } \right) + ( - 1)^r L_{s - r} G_k  = L_r G_{k + s}.
\end{equation*}
The interested reader is invited to discover, by differentiating with respect to $s$, that~\eqref{eq.llqxumw} implies
\begin{equation}
L_s \left( {G_{k + r + 1}  + G_{k + r - 1} } \right) + ( - 1)^r 5F_{s - r} G_k  = L_r \left( {G_{k + s + 1}  + G_{k + s - 1} } \right);
\end{equation}
and that differentiating~\eqref{eq.e715lgu} with respect to $k$ does not produce a new result.

\subsubsection{Example from a general recurrence relation}
Consider the following identity of Horadam~\cite[Equation (3.14), $q=-1$]{horadam65}:
\begin{equation}
U_r W_{k + 1}  + U_{r - 1} W_k  = W_{k + r}.
\end{equation}
We write
\begin{equation*}
u(r) w(k + 1)  + u(r - 1)w(k)  = w(k + r );
\end{equation*}
and differentiate with respect to $r$, obtaining
\begin{equation*}
\frac{d}{{dr}}u(r)\times w(k + 1) + \frac{d}{{dr}}u(r - 1)\times w(k) = \frac{\partial }{{\partial r}}w(k + r);
\end{equation*}
so that, taking real part, we find
\begin{equation*}
\Re\frac{d}{{dr}}u(r)\times W_{k + 1} + \Re\frac{d}{{dr}}u(r - 1)\times W_k = \Re\frac{\partial }{{\partial r}}w(k + r);
\end{equation*}
and hence, upon using~\eqref{eq.tnsd6nm} and~\eqref{eq.q9r3ffo} to replace the derivatives, we obtain
\begin{equation*}\tag{\ref{eq.o0dg2ly}}
V_r W_{k + 1}  + V_{r - 1} W_k  = W_{k + r + 1}  + W_{k + r - 1}.
\end{equation*}
In particular,
\begin{gather}
V_r U_{k + 1}  + V_{r - 1} U_k  = V_{k + r}, \\
V_r V_{k + 1}  + V_{r - 1} V_k  = (p^2  + 4)U_{k + r}.
\end{gather}
\subsubsection{Example from a multiplication formula}
Here we will demonstrate that the identity~\cite[Equation (3.16)]{horadam65}:
\begin{equation*}
W_{k + r} + (-1)^rW_{k - r}=V_rW_k
\end{equation*}
implies the identity
\begin{equation}\label{eq.zed4w03}
(W_{k + r + 1}  + W_{k + r - 1} ) - ( - 1)^r (W_{k - r + 1}  + W_{k - r - 1} ) = U_r W_k \Delta ^2.
\end{equation}
We write
\begin{equation*}
w(k + r) + ( - 1)^r w(k - r) = v(r)w(k)
\end{equation*}
and differentiate through with respect to $r$ to obtain
\begin{equation*}
\frac{\partial }{{\partial r}}w(k + r) + ( - 1)^r \pi iw(k - r) - ( - 1)^r \frac{\partial }{{\partial r}}w(k - r) = w(k)\frac{d}{{dr}}v(r);
\end{equation*}
so that
\begin{equation*}
\Re\frac{\partial }{{\partial r}}w(k + r) - ( - 1)^r \Re\frac{\partial }{{\partial r}}w(k - r) = w(k)\Re\frac{d}{{dr}}v(r).
\end{equation*}
Using~\eqref{eq.q9r3ffo} and~\eqref{eq.gwl4lx2}, we get
\begin{equation}
\frac{{W_{k + r + 1}  + W_{k + r - 1} }}{\Delta } - ( - 1)^r \frac{{(W_{k - r + 1}  + W_{k - r - 1} )}}{\Delta } = W_kU_r\Delta;
\end{equation}
and hence~\eqref{eq.zed4w03}.

\bigskip

Identities
\begin{equation}
V_{k + r}  - ( - 1)^r V_{k - r}  = U_k U_r \Delta ^2
\end{equation}
and
\begin{equation}
U_{k + r}  - ( - 1)^r U_{k - r}  = U_r V_k
\end{equation}
are special cases of~\eqref{eq.zed4w03}.
\section{Applications}\label{sec.nglclif}
In this section, we pick various known results from the literature and apply the method (first component) to discover new identities.
\subsection{New identities from an identity of Long}
Long~\cite[Equation (44)]{long90} showed that, for a non-negative integer $n$ and any integers $k$ and~$r$,
\begin{equation}\label{eq.rjsdo3v}
\sum_{j = 0}^n {\binom njF_{r + 2kj} }  = L_k^n F_{r + nk},\quad\text{if $k$ is even}.
\end{equation}
Based on the knowledge of~\eqref{eq.rjsdo3v} alone, we will derive the results stated in the proposition.
\begin{proposition}
If $n$ is a non-negative integer, $k$ is an even integer and $r$ is any integer, then
\begin{gather}
2\sum_{j = 0}^n {j\binom njL_{r + 2kj} }  = 5nL_k^{n - 1} F_{r + nk} F_k  + nL_k^n L_{r + nk} ,\label{eq.h1dlneo}\\
2\sum_{j = 0}^n {j\binom njF_{r + 2kj} }  = nL_k^{n - 1} L_{r + nk} F_k  + nL_k^n F_{r + nk}\label{eq.i7p1ny7}. 
\end{gather}
\end{proposition}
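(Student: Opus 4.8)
The plan is to apply the method (first component) to Long's identity~\eqref{eq.rjsdo3v}, taking $k$ as the free index of interest; this is precisely the choice that produces the weight $j$ inside the sums. First I would pass to the functional form $\sum_{j=0}^n\binom njf(r+2kj)=l(k)^nf(r+nk)$ with $k$ regarded as a real variable, and differentiate through with respect to $k$. By the bookkeeping rules~\eqref{eq.v15rnmx}--\eqref{eq.rws86w7}, the $j$th summand on the left picks up the chain factor $\partial(r+2kj)/\partial k=2j$, while the right-hand side, via the product rule on $l(k)^nf(r+nk)$, contributes one term from $\frac{d}{dk}l(k)^n$ (chain factor $1$ for $L_k$) and one from $\frac{\partial}{\partial k}f(r+nk)$ (chain factor $n$). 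After taking real parts this reads
\[
\sum_{j=0}^n 2j\binom nj\,\frac{L_{r+2kj}}{\sqrt5}\,\ln\alpha \;=\; nL_k^{n-1}F_k\sqrt5\,\ln\alpha\,F_{r+nk} \;+\; nL_k^n\,\frac{L_{r+nk}}{\sqrt5}\,\ln\alpha,
\]
and cancelling $\ln\alpha$ and multiplying through by $\sqrt5$ gives exactly~\eqref{eq.h1dlneo}; note that the $j=0$ summand carries the factor $2j=0$, so nothing is lost by starting the sum at $j=0$.

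For~\eqref{eq.i7p1ny7} I would differentiate the identity~\eqref{eq.h1dlneo} just obtained, this time with respect to the unrestricted index $r$; every subscript there has the form $r+(\text{const})$, so each $\partial/\partial r$ carries chain factor $1$. The replacement rules~\eqref{eq.v15rnmx}--\eqref{eq.rws86w7} then convert $\frac{\partial}{\partial r}L_{r+2kj}\to F_{r+2kj}\sqrt5\ln\alpha$ on the left and $\frac{\partial}{\partial r}F_{r+nk}\to\frac{L_{r+nk}}{\sqrt5}\ln\alpha$, $\frac{\partial}{\partial r}L_{r+nk}\to F_{r+nk}\sqrt5\ln\alpha$ on the right, so that after taking real parts and dividing by $\sqrt5\ln\alpha$ the equation collapses to~\eqref{eq.i7p1ny7}. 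Equivalently, one could first obtain the Lucas companion $\sum_{j=0}^n\binom njL_{r+2kj}=L_k^nL_{r+nk}$ by differentiating~\eqref{eq.rjsdo3v} in $r$, and then run the $k$-differentiation above on that companion.

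The single delicate point is that~\eqref{eq.rjsdo3v} is asserted only for even $k$, whereas the procedure treats $k$ as a continuous variable, differentiates, and only then restricts back to (even) integer indices through the real-part prescription. Legitimizing this operation is exactly the task undertaken in section~\ref{sec.justification}, and the two identities produced inherit the hypothesis ``$k$ even''. Granting that justification, everything here is a routine application of the product and chain rules together with the replacement rules~\eqref{eq.v15rnmx}--\eqref{eq.rws86w7}, and I anticipate no further obstacle.
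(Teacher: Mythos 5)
Your proposal is correct and follows essentially the same route as the paper: differentiate the functional form of Long's identity with respect to $k$ and apply the real-part replacement rules to get~\eqref{eq.h1dlneo}, then differentiate that result with respect to $r$ to get~\eqref{eq.i7p1ny7}. Your bookkeeping of the chain factors and the final simplification match the paper's computation exactly, and your remark about inheriting the hypothesis that $k$ is even is consistent with how the paper states the proposition.
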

Identity~\eqref{eq.rjsdo3v} contains two free indices $r$ and $k$. Treating $r$ as the index of interest immediately gives the Lucas version of~\eqref{eq.rjsdo3v}, namely,
\begin{equation*}
\sum_{j = 0}^n {\binom njL_{r + 2kj} }  = L_k^n L_{r + nk},\quad\text{if $k$ is even};
\end{equation*}
coming from
\begin{equation*}
\sum_{j = 0}^n {\binom nj\Re\frac{\partial }{{\partial r}}f(r + 2kj)}  = l(k)^n \Re\frac{\partial }{{\partial r}}f(r + nk)
\end{equation*}
and prescription~\eqref{eq.wjannba}.

\bigskip

To derive~\eqref{eq.h1dlneo}, write~\eqref{eq.rjsdo3v} as
\begin{equation*}
\sum_{j = 0}^n {\binom njf(r + 2kj) }  = l(k)^n f(r + nk);
\end{equation*}
treat $k$ as the index of interest and differentiate with respect to $k$ (step~\ref{step2}) to obtain
\begin{equation*}
\sum_{j = 0}^n {2j\binom nj\frac{\partial }{{\partial k}}f(r + 2kj)}  = nl(k)^{n - 1} f(r + nk)\frac{\partial }{{\partial k}}l(k) + nl(k)^n \frac{\partial }{{\partial k}}f(r + nk),
\end{equation*}
and, taking real part,
\begin{equation}\label{eq.q13yt1g}
\sum_{j = 0}^n {2j\binom nj\Re\frac{\partial }{{\partial k}}f(r + 2kj)}  = nL_k^{n - 1} F_{r + nk}\Re\frac{\partial }{{\partial k}}l(k) + nL_k^n \Re\frac{\partial }{{\partial k}}f(r + nk).
\end{equation}
Thus~\eqref{eq.h1dlneo} follows from step~\ref{step3} of section~\ref{sec.method}, after using~\eqref{eq.wjannba} and~\eqref{eq.wpu2vfi} to replace the derivatives in~\eqref{eq.q13yt1g}.

\bigskip

To derive~\eqref{eq.i7p1ny7} treat $r$ as the free index of interest in~\eqref{eq.h1dlneo} and write
\begin{equation*}
2\sum_{j = 0}^n {j\binom nj\frac{\partial }{{\partial r}}l(r + 2kj)}  = 5nL_k^{n - 1} \frac{\partial }{{\partial r}}f(r + nk)f(k) + nL_k^n \frac{\partial }{{\partial r}}l(r + nk).
\end{equation*}
\subsection{New identities arising from an identity of Hoggatt and Bicknell}
Based on~Hoggatt and Bicknell's result~\cite[Identity $2^{'}$]{hoggatt64b}:
\begin{equation*}\tag{\ref{eq.iigew21}}
\sum_{j = 0}^{4n + 1} {( - 1)^{j - 1} \binom{4n + 1}jF_{j + k}^4 }  = 25^n \left( {F_{2n + k + 1}^4  - F_{2n + k}^4 } \right),
\end{equation*}
we wish to derive the four identities~\eqref{eq.f6y5qaq},~\eqref{eq.jesqs9v},~\eqref{eq.rtq2eat} and~\eqref{eq.pk54e82} stated in the Introduction section.

\bigskip

Write~\eqref{eq.iigew21} as
\begin{equation*}
\sum_{j = 0}^{4n + 1} {( - 1)^{j - 1} \binom{4n + 1}jf(j + k)^4 }  = 25^n \left( {f(2n + k + 1)^4  - f(2n + k)^4 } \right);
\end{equation*}
and differentiate through, with respect to $k$, to obtain
\begin{equation}\label{eq.vzdatpd}
\begin{split}
&\sum_{j = 0}^{4n + 1} {( - 1)^{j - 1} \binom{4n + 1}j4f(j + k)^3 \frac{\partial }{{\partial k}}f(j + k)}\\ 
&\qquad = 25^n \left( {4f(2n + k + 1)^3 \frac{\partial }{{\partial k}}f(2n + k + 1) - 4f(2n + k)^3 \frac{\partial }{{\partial k}}f(2n + k)} \right);
\end{split}
\end{equation}
and taking real part:
\begin{equation*}
\begin{split}
&\sum_{j = 0}^{4n + 1} {( - 1)^{j - 1} \binom{4n + 1}j4F_{j + k}^3 \Re\frac{\partial }{{\partial k}}f(j + k)}\\ 
&\qquad = 25^n \left( {4F_{2n + k + 1}^3 \Re\frac{\partial }{{\partial k}}f(2n + k + 1) - 4F_{2n + k}^3 \Re\frac{\partial }{{\partial k}}f(2n + k)} \right).
\end{split}
\end{equation*}
Thus,
\begin{equation*}
\begin{split}
&\sum_{j = 0}^{4n + 1} {( - 1)^{j - 1} \binom{4n + 1}jF_{j + k}^3 \frac{L_{j + k}}{\sqrt 5}}\\ 
&\qquad = 25^n \left( {F_{2n + k + 1}^3 \frac{L_{2n + k + 1}}{\sqrt 5} - F_{2n + k}^3 \frac{L_{2n + k}}{\sqrt 5}} \right);
\end{split}
\end{equation*}
and hence~\eqref{eq.f6y5qaq}. Identities~\eqref{eq.jesqs9v},~\eqref{eq.rtq2eat} and~\eqref{eq.pk54e82} are derived in the same manner; \eqref{eq.jesqs9v} is obtained from~\eqref{eq.f6y5qaq}, etc.
\subsection{New identities from an inverse tangent identity}
\begin{proposition}
If $k$ is any integer, then
\begin{gather}
\frac{{L_{2k + 1} }}{{F_{2k + 1}^2  + 1}} = \frac{{L_{2k} }}{{F_{2k}^2  + 1}} - \frac{{L_{2k + 2} }}{{F_{2k + 2}^2  + 1}}\label{eq.ym6resq},\\
\nonumber\\
\frac{{L_{2k + 1} }}{{L_{2k} L_{2k + 2} }}\frac{{(F_{2k}^2  + 1)(F_{2k + 2}^2  + 1)}}{{(F_{2k + 1}^2  + 1)}} = \frac{{(F_{2k + 2}^2  + 1)}}{{L_{2k + 2} }} - \frac{{(F_{2k}^2  + 1)}}{{L_{2k} }}\label{eq.nzng3sw}.
\end{gather}
\end{proposition}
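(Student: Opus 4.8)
The plan is to read \eqref{eq.ym6resq} off the known inverse-tangent identity \eqref{eq.k9bx5vw} by a direct application of the first-component method (steps~\ref{step1}--\ref{step3}) with $k$ as the free index, and then to extract \eqref{eq.nzng3sw} from \eqref{eq.ym6resq} by an elementary algebraic rearrangement.

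For \eqref{eq.ym6resq}, step~\ref{step1} turns \eqref{eq.k9bx5vw} into
\[
\tan^{-1}\frac{1}{f(2k+1)}=\tan^{-1}\frac{1}{f(2k)}-\tan^{-1}\frac{1}{f(2k+2)} .
\]
Differentiating through with respect to $k$ (step~\ref{step2}) and using $\frac{d}{dk}\tan^{-1}\bigl(1/f(2k+c)\bigr)=-2f'(2k+c)/\bigl(f(2k+c)^2+1\bigr)$, I see that the factor $-2$ is common to all three terms and cancels, leaving
\[
\frac{f'(2k+1)}{f(2k+1)^2+1}=\frac{f'(2k)}{f(2k)^2+1}-\frac{f'(2k+2)}{f(2k+2)^2+1} .
\]
For step~\ref{step3}, I would observe that at an integer argument $f(2k+c)^2+1=F_{2k+c}^2+1$ is real, so taking the real part passes through each denominator; applying \eqref{eq.wjannba} (equivalently $\Re f'(x)|_{x=j}=(L_j/\sqrt5)\ln\alpha$ from \eqref{eq.wyg8jr6}) replaces each numerator $f'(2k+c)$ by $(L_{2k+c}/\sqrt5)\ln\alpha$, and cancelling the common $(\ln\alpha)/\sqrt5$ gives \eqref{eq.ym6resq}.

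To obtain \eqref{eq.nzng3sw}, I would divide both sides of \eqref{eq.ym6resq} by the product $\dfrac{L_{2k}}{F_{2k}^2+1}\cdot\dfrac{L_{2k+2}}{F_{2k+2}^2+1}$ of its two right-hand terms: the left side becomes $\dfrac{L_{2k+1}}{L_{2k}L_{2k+2}}\cdot\dfrac{(F_{2k}^2+1)(F_{2k+2}^2+1)}{F_{2k+1}^2+1}$, while on the right side the two terms of \eqref{eq.ym6resq} collapse, term by term, to $\dfrac{F_{2k+2}^2+1}{L_{2k+2}}-\dfrac{F_{2k}^2+1}{L_{2k}}$, which is exactly \eqref{eq.nzng3sw}.

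The only real obstacle is the usual one for the first-component method: justifying the passage to real parts in a nonlinear (here, arctangent) identity. One must check that $\tan^{-1}\bigl(1/f(x)\bigr)$ is well defined and differentiable near the relevant integers even though $f(x)$ is complex for non-integer real $x$, and that the imaginary parts really do drop out consistently rather than contributing spurious terms. I would invoke \S\ref{sec.justification} for this rather than re-argue it; once it is granted, everything above is routine differentiation together with the elementary facts $f(x)|_{x=j}=F_j$ and $\Re f'(x)|_{x=j}=(L_j/\sqrt5)\ln\alpha$.
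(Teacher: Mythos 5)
Your proposal matches the paper's own derivation: step~\ref{step1} applied to \eqref{eq.k9bx5vw}, differentiation in $k$ with the common chain-rule factor cancelling, taking real parts via \eqref{eq.wjannba} to get \eqref{eq.ym6resq}, and then obtaining \eqref{eq.nzng3sw} as an algebraic rearrangement (the paper simply states it is a rearrangement; your explicit division by the product of the two right-hand terms is the correct one). Your closing caveat about justifying the passage to real parts for the arctangent identity is handled in the paper exactly as you suggest, by deferring to \S\ref{sec.justification}.
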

Recall
\begin{equation*}\tag{\ref{eq.k9bx5vw}}
\tan ^{ - 1} \frac{1}{{F_{2k + 1} }} = \tan ^{ - 1} \frac{1}{{F_{2k} }} - \tan ^{ - 1} \frac{1}{{F_{2k + 2} }}.
\end{equation*}
To derive~\eqref{eq.ym6resq}, write~\eqref{eq.k9bx5vw} as
\begin{equation}\label{eq.hpngjml}
\tan ^{ - 1} \frac{1}{{f(2k + 1) }} = \tan ^{ - 1} \frac{1}{{f(2k) }} - \tan ^{ - 1} \frac{1}{{f(2k + 2) }},
\end{equation}
and differentiate with respect to $k$ to obtain
\begin{equation*}
\begin{split}
&\frac{1}{{f(2k + 1)^2  + 1}}\,\frac{d}{{dk}}f(2k + 1)\\
&\qquad = \frac{1}{{f(2k)^2  + 1}}\,\frac{d}{{dk}}f(2k) - \frac{1}{{f(2k + 2)^2  + 1}}\,\frac{d}{{dk}}f(2k + 2),
\end{split}
\end{equation*}
and, taking real part,
\begin{equation*}
\begin{split}
&\frac{1}{{F_{2k + 1}^2  + 1}}\,\Re\frac{d}{{dk}}f(2k + 1)\\
&\qquad = \frac{1}{{F_{2k}^2  + 1}}\,\Re\frac{d}{{dk}}f(2k) - \frac{1}{{F_{2k + 2}^2  + 1}}\,\Re\frac{d}{{dk}}f(2k + 2),
\end{split}
\end{equation*}
and hence~\eqref{eq.ym6resq}, upon using~\eqref{eq.wjannba}.
Identity~\eqref{eq.nzng3sw} is a rearrangement of~\eqref{eq.ym6resq}.

\bigskip

Simple telescoping of~\eqref{eq.ym6resq} and~\eqref{eq.nzng3sw} produces the results stated in the next proposition.
\begin{proposition}
If $n$ is any integer, then
\begin{gather*}
\sum_{k = 1}^n {\frac{{L_{2k + 1} }}{{F_{2k + 1}^2  + 1}}}  = \frac{3}{2} - \frac{{L_{2(n + 1)} }}{{F_{2(n + 1)}^2  + 1}},\\
\sum_{k = 1}^n {\frac{{L_{2k + 1} }}{{L_{2k} L_{2k + 2} }}\frac{{\left( {F_{2k}^2  + 1} \right)\left( {F_{2k + 2}^2  + 1} \right)}}{{\left( {F_{2k + 1}^2  + 1} \right)}}}  = \frac{{F_{2(n + 1)}^2  + 1}}{{L_{2n + 2} }} - \frac{2}{3};
\end{gather*}
with the limiting case:
\begin{equation*}
\sum_{k = 1}^\infty {\frac{{L_{2k + 1} }}{{F_{2k + 1}^2  + 1}}}  = \frac32.
\end{equation*}
\end{proposition}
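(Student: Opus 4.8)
The plan is to obtain all three statements by straightforward telescoping of the two identities~\eqref{eq.ym6resq} and~\eqref{eq.nzng3sw} established in the preceding proposition; no new Fibonacci--Lucas input is needed, so this is purely a bookkeeping exercise.

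First I would set $a_k = L_{2k}/(F_{2k}^2 + 1)$ and read~\eqref{eq.ym6resq} as the assertion $L_{2k+1}/(F_{2k+1}^2+1) = a_k - a_{k+1}$. Summing over $k = 1,\dots,n$ collapses the right-hand side to $a_1 - a_{n+1}$, and since $F_2 = 1$ and $L_2 = 3$ we have $a_1 = 3/2$, which yields the first identity. Similarly, writing $b_k = (F_{2k}^2 + 1)/L_{2k}$, identity~\eqref{eq.nzng3sw} states that the $k$th summand equals $b_{k+1} - b_k$, so the partial sum telescopes to $b_{n+1} - b_1 = (F_{2n+2}^2 + 1)/L_{2n+2} - 2/3$, which is the second identity. (These telescopes are literal for $n \ge 1$; the cases $n \le 0$ follow from the usual empty- and reversed-sum conventions.)

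For the limiting case I would let $n \to \infty$ in the first identity and show that $a_{n+1} = L_{2n+2}/(F_{2n+2}^2 + 1) \to 0$. Using the Binet forms~\eqref{eq.ei9m9n1} together with $|\beta| < 1 < \alpha$, we have $L_{2n+2} = \alpha^{2n+2} + \beta^{2n+2}$, which stays bounded by a constant times $\alpha^{2n}$, whereas $F_{2n+2}^2 + 1 \ge F_{2n+2}^2 = (\alpha^{2n+2} - \beta^{2n+2})^2/5$ grows like $\alpha^{4n}$; hence the ratio tends to $0$, and the series converges to $3/2$.

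I do not expect a genuine obstacle here. The only points requiring care are the elementary algebraic equivalence of~\eqref{eq.ym6resq} and~\eqref{eq.nzng3sw} (obtained by clearing denominators), which was already asserted as a rearrangement, and the asymptotic estimate for the tail term $a_{n+1}$. If anything, the mild subtlety is simply keeping the index bookkeeping of the two telescopes consistent: one collapses ``from the left'' leaving $a_1$, the other ``from the right'' leaving $-b_1$.
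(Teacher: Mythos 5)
Your proposal is correct and matches the paper's own (very terse) argument, which simply states that the results follow by ``simple telescoping'' of~\eqref{eq.ym6resq} and~\eqref{eq.nzng3sw}; you have merely supplied the bookkeeping and the tail estimate that the paper leaves implicit. No issues.
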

\subsection{New identities from an identity of Jennings}
Jennings~\cite[Theorem 2]{jennings98} showed, among results of a similar nature, that
\begin{equation*}
F_k \sum_{j = 0}^n {( - 1)^{(k + 1)(n + j)} \binom{n + j}{2j}L_k^{2j} }  = F_{(2n + 1)k}.
\end{equation*}
Writing
\begin{equation*}
\sum_{j = 0}^n {( - 1)^{(k + 1)(n + j)} \binom{n + j}{2j}l(k)^{2j} }  = \frac{{f((2n + 1)k)}}{{f(k)}}
\end{equation*}
and differentiating with respect to $k$ gives
\begin{equation*}
\begin{split}
&\sum_{j = 0}^n {( - 1)^{(k + 1)(n + j)} (n + j)\pi i\binom{n + j}{2j}l(k)^{2j} }  + \sum_{j = 0}^n {( - 1)^{(k + 1)(n + j)} 2j\binom{n + j}{2j}l(k)^{2j - 1} \frac{d}{{dk}}l(k)} \\
&\qquad = \frac{{2n + 1}}{{f(k)}}\frac{d}{{dk}}f((2n + 1)k) - \frac{{f((2n + 1)k)}}{{f(k)^2 }}\frac{d}{{dk}}f(k),
\end{split}
\end{equation*}
and taking real part,
\begin{equation*}
\begin{split}
&\sum_{j = 0}^n {( - 1)^{(k + 1)(n + j)} 2j\binom{n + j}{2j}L_k^{2j - 1} \Re\frac{d}{{dk}}l(k)} \\
&\qquad = \frac{{2n + 1}}{{F_k}}\Re\frac{d}{{dk}}f((2n + 1)k) - \frac{{F_{(2n + 1)k}}}{{F_k^2 }}\Re\frac{d}{{dk}}f(k),
\end{split}
\end{equation*}
which, by~\eqref{eq.wjannba} and~\eqref{eq.wpu2vfi} gives
\begin{equation*}
\sum_{j = 0}^n {( - 1)^{(k + 1)(n + j)} 2j\binom{n + j}{2j}L_k^{2j - 1} F_k \sqrt 5 } = \frac{{2n + 1}}{F_k}\frac{L_{(2n + 1)k}}{\sqrt 5} - \frac{F_{(2n + 1)k}}{F_k^2}\frac{L_k}{\sqrt 5};
\end{equation*}
and hence the result stated in the next proposition.
\begin{proposition}
For non-negative integers $k$ and $n$, we have
\begin{equation*}
F_k^3 \sum_{j = 0}^n {( - 1)^{(k + 1)(n + j)} j\binom{n + j}{2j}L_k^{2j} }  = \frac{1}{{10}}\left( {(2n + 1)F_{2k} L_{(2n + 1)k}  - F_{(2n + 1)k} L_k^2 } \right).
\end{equation*}
\end{proposition}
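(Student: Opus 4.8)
The plan is to apply the method (first component) with $k$ as the free index to Jennings' identity
\[
F_k \sum_{j=0}^n (-1)^{(k+1)(n+j)}\binom{n+j}{2j}L_k^{2j} = F_{(2n+1)k},
\]
and then rescale. First I would rewrite it (step~\ref{step1}) as
$\sum_{j=0}^n (-1)^{(k+1)(n+j)}\binom{n+j}{2j}l(k)^{2j} = f((2n+1)k)/f(k)$,
which is legitimate for $k\ge 1$, where $f(k)=F_k\ne 0$ (the case $k=0$ is trivial, both sides vanishing since $F_0=0$). I would then differentiate through with respect to $k$ by the product, power and quotient rules (step~\ref{step2}). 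On the left, each summand splits into a term coming from $\frac{d}{dk}(-1)^{(k+1)(n+j)}=i\pi(n+j)(-1)^{(k+1)(n+j)}$ and a term $2j\,l(k)^{2j-1}\frac{d}{dk}l(k)$; on the right I obtain $\frac{2n+1}{f(k)}\frac{d}{dk}f((2n+1)k)-\frac{f((2n+1)k)}{f(k)^2}\frac{d}{dk}f(k)$.

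Next I would take real parts (step~\ref{step3}). The key observation is that at an integer $k$ the summand carrying the factor $i\pi(n+j)$ is purely imaginary — the sign, $\binom{n+j}{2j}$ and $l(k)^{2j}=L_k^{2j}$ are all real — so it disappears. Using the prescriptions \eqref{eq.dvyw1rc} and \eqref{eq.pdu0l2a} (the latter iterated over the powers of $l$), together with \eqref{eq.wjannba} and \eqref{eq.wpu2vfi} for the derivatives and $\Re(Y/F_k)=(\Re Y)/F_k$, the equation collapses, after cancelling the common factor $\ln\alpha$, to
\[
\sqrt 5\,F_k\sum_{j=0}^n (-1)^{(k+1)(n+j)}\,2j\binom{n+j}{2j}L_k^{2j-1} = \frac{2n+1}{F_k}\frac{L_{(2n+1)k}}{\sqrt 5} - \frac{F_{(2n+1)k}}{F_k^2}\frac{L_k}{\sqrt 5},
\]
which is exactly the identity displayed just before the statement.

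The last step is a rescaling: multiplying through by $F_k^2 L_k/(2\sqrt 5)$ turns the left-hand side into $F_k^3\sum_{j=0}^n (-1)^{(k+1)(n+j)} j\binom{n+j}{2j}L_k^{2j}$ and the right-hand side into $\frac{1}{10}\big((2n+1)F_kL_kL_{(2n+1)k}-F_{(2n+1)k}L_k^2\big)$; invoking the double-angle identity $F_{2k}=F_kL_k$ then yields the claimed formula. I expect no real obstacle here: the only points that need care — and both are routine — are the vanishing under $\Re$ of the purely imaginary contribution of the $k$-dependent sign $(-1)^{(k+1)(n+j)}=e^{i\pi(k+1)(n+j)}$ (the general justification being deferred to section~\ref{sec.justification}) and the harmless restriction $k\ge 1$ used to divide by $F_k$.
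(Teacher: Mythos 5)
Your proposal is correct and follows essentially the same route as the paper: rewrite Jennings' identity with $l(k)$ and $f(k)$, differentiate with respect to $k$, discard the purely imaginary term arising from $\frac{d}{dk}(-1)^{(k+1)(n+j)}$ upon taking real parts, apply the prescriptions \eqref{eq.wjannba} and \eqref{eq.wpu2vfi}, and rescale by $F_k^2L_k/(2\sqrt 5)$ using $F_{2k}=F_kL_k$. Your extra remarks on the case $k=0$ and on why the $i\pi(n+j)$ term vanishes are sound refinements of the paper's argument, not deviations from it.
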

We also have the following divisibility property.
\begin{proposition}
If $n$ and $k$ are non-negative integers, then
\begin{equation*}
10 F_k^3\text{ divides }  {(2n + 1)F_{2k} L_{(2n + 1)k}  - F_{(2n + 1)k} L_k^2 } .
\end{equation*}
\end{proposition}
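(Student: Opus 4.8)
The plan is to obtain the statement as an immediate corollary of the preceding proposition, the only extra ingredient being that the sum occurring there is an integer. First I would abbreviate
\[
S_{n,k} = \sum_{j=0}^n (-1)^{(k+1)(n+j)}\, j\binom{n+j}{2j}\, L_k^{2j}
\]
and check that $S_{n,k}\in\mathbb Z$: for $0\le j\le n$ we have $2j\le n+j$, so $\binom{n+j}{2j}$ is an ordinary (nonnegative integer) binomial coefficient; the sign $(-1)^{(k+1)(n+j)}$ equals $\pm 1$; and $j$, $L_k$ are integers. Hence every summand lies in $\mathbb Z$, and so does $S_{n,k}$.

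Next I would quote the preceding proposition in the equivalent form
\[
(2n+1)F_{2k}L_{(2n+1)k} - F_{(2n+1)k}L_k^2 = 10\,F_k^3\, S_{n,k},
\]
which exhibits the left-hand side as $10F_k^3$ times the integer $S_{n,k}$ --- exactly the assertion that $10F_k^3$ divides $(2n+1)F_{2k}L_{(2n+1)k} - F_{(2n+1)k}L_k^2$. Finally I would dispose of the degenerate case $k=0$ directly: there $F_0=0$ and $L_0=2$, so both sides of the displayed identity vanish and the divisibility statement reduces to the trivially true $0\mid 0$.

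I do not expect a genuine obstacle here; the argument is essentially a restatement of the previous proposition. The only points meriting a moment's care are the integrality of $\binom{n+j}{2j}$ throughout the range of summation and the harmless $k=0$ boundary check just mentioned.
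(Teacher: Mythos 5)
Your proposal is correct and is exactly the argument the paper intends: the divisibility is an immediate corollary of the preceding proposition, read as expressing $(2n+1)F_{2k}L_{(2n+1)k}-F_{(2n+1)k}L_k^2$ as $10F_k^3$ times an integer sum. Your explicit checks of the integrality of the summands and of the degenerate case $k=0$ are careful touches the paper leaves implicit.
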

\subsection{New identities from Candido's identity}
Setting $x=G_k$, $y=G_{k + 1}$ in the algebraic identity:
\begin{equation}
2\left( {x^4  + y^4  + (x + y)^4 } \right) = \left( {x^2  + y^2  + (x + y)^2 } \right)^2,
\end{equation}
gives the following generalization of Candido's identity:
\begin{equation}
2\left( {G_k^4  + G_{k + 1}^4  + G_{k + 2}^4 } \right) = \left( {G_k^2  + G_{k + 1}^2  + G_{k + 2}^2 } \right)^2.
\end{equation}
Writing
\begin{equation*}
2(g(k)^4  + g(k + 1)^4  + g(k + 2)^4 ) = \left( {g(k)^2  + g(k + 1)^2  + g(k + 2)^2 } \right)^2,
\end{equation*}
and differentiating with respect to $k$ gives
\begin{equation}\label{eq.eqypotv}
\begin{split}
&2\left( {g(k)^3 \frac{d}{{dk}}g(k) + g(k + 1)^3 \frac{d}{{dk}}g(k + 1) + g(k + 2)^3 \frac{d}{{dk}}g(k + 2)} \right)\\
&\qquad = \left( {g(k)^2  + g(k + 1)^2  + g(k + 2)^2 } \right)\times\\
&\qquad\quad\left( {g(k)\frac{d}{{dk}}g(k) + g(k + 1)\frac{d}{{dk}}g(k + 1) + g(k + 2)\frac{d}{{dk}}g(k + 2)} \right);
\end{split}
\end{equation}
so that applying the prescription~\eqref{eq.h54si8d} and~\eqref{eq.jp0f8zj} yields
\begin{equation*}
\begin{split}
&2\left( {G_k^3 (G_{k + 1}  + G_{k - 1} ) + G_{k + 1}^3 (G_{k + 2}  + G_k ) + G_{k + 2}^3 (G_{k + 3}  + G_{k + 1} )} \right)\\
&\qquad= (G_k^2  + G_{k + 1}^2  + G_{k + 2}^2 )\left( {G_k (G_{k + 1}  + G_{k - 1} ) + } \right.\\
&\qquad\qquad\qquad\left. {G_{k + 1} (G_{k + 2}  + G_k ) + G_{k + 2} (G_{k + 3}  + G_{k + 1} )} \right),
\end{split}
\end{equation*}
which can be arranged as stated in the next proposition.
\begin{proposition}
For every integer $k$,
\begin{equation}
\begin{split}
&\quad\quad G_k^2 \left( {G_{k + 1} (G_{k + 2}  + G_k ) + G_{k + 2} (G_{k + 3}  + G_{k + 1} ) - G_k (G_{k + 1}  + G_{k - 1} )} \right)\\
&\quad + G_{k + 1}^2 \left( {G_k (G_{k + 1}  + G_{k - 1} ) + G_{k + 2} (G_{k + 3}  + G_{k + 1} ) - G_{k + 1} (G_{k + 2}  + G_k )} \right)\\
&\quad + G_{k + 2}^2 \left( {G_k (G_{k + 1}  + G_{k - 1} ) + G_{k + 1} (G_{k + 2}  + G_k ) - G_{k + 2} (G_{k + 3}  + G_{k + 1} )} \right)\\
&\qquad = 0.
\end{split}
\end{equation}
\end{proposition}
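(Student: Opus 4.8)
The plan is to read the proposition off as a purely algebraic rearrangement of the identity displayed immediately before its statement, which itself was produced by applying the first component of the method to the gibonacci form of Candido's identity. First I would start from $2(G_k^4 + G_{k+1}^4 + G_{k+2}^4) = (G_k^2 + G_{k+1}^2 + G_{k+2}^2)^2$, replace each gibonacci number $G_{h(k)}$ by $g(h(k))$, differentiate with respect to $k$ by the power and product rules (this is \eqref{eq.eqypotv}), pass to real parts, and apply the prescriptions \eqref{eq.h54si8d} and \eqref{eq.jp0f8zj}: each factor $g(h(k))$ becomes $G_{h(k)}$, while each $\tfrac{d}{dk}g(h(k))$ becomes $\bigl(G_{h(k+1)}+G_{h(k-1)}\bigr)\ln\alpha/\sqrt5$. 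Cancelling the common factor $\ln\alpha/\sqrt5$ then leaves exactly the relation displayed immediately before the statement of the proposition.

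To close the argument I would abbreviate $a=G_k^2$, $b=G_{k+1}^2$, $c=G_{k+2}^2$ and $x=G_k(G_{k+1}+G_{k-1})$, $y=G_{k+1}(G_{k+2}+G_k)$, $z=G_{k+2}(G_{k+3}+G_{k+1})$, so that $G_k^3(G_{k+1}+G_{k-1})=ax$ and likewise for the two other cubic terms. In this notation the displayed relation reads $2(ax+by+cz)=(a+b+c)(x+y+z)$. Expanding the right-hand side, subtracting $ax+by+cz$ from both sides, and grouping the remaining six products by their coefficient $a$, $b$, or $c$ turns the equality into
\[
a(y+z-x)+b(x+z-y)+c(x+y-z)=0,
\]
which, after rewriting $a,b,c,x,y,z$ in terms of the $G_j$, is precisely the stated identity.

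I do not expect a genuine obstacle here: the differentiation is routine and the closing step is elementary linear algebra in $a,b,c$. The only matters requiring mild care are the constant bookkeeping in \eqref{eq.eqypotv} — the factor $4$ produced by differentiating a fourth power and the factor $2$ coming from the square on the right combine so that the whole equation may be divided by $2$ — and the observation that in \eqref{eq.jp0f8zj} the inner derivative $\partial_k h$ equals $1$ for every index $h\in\{k-1,k,k+1,k+2,k+3\}$ occurring, so that each differentiated gibonacci term picks up exactly the $+1$ and $-1$ index shifts prescribed there. As a numerical sanity check, the Fibonacci specialization $G_0=0$, $G_1=1$, $k=1$ makes the left-hand side equal $1\cdot(3+8-1)+1\cdot(1+8-3)+4\cdot(1+3-8)=10+6-16=0$.
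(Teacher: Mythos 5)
Your proposal is correct and follows the paper's own route exactly: derive the displayed identity preceding the proposition by differentiating the gibonacci function form of Candido's identity and applying the real-part prescriptions \eqref{eq.h54si8d} and \eqref{eq.jp0f8zj}, then rearrange $2(ax+by+cz)=(a+b+c)(x+y+z)$ into $a(y+z-x)+b(x+z-y)+c(x+y-z)=0$. The paper leaves this last rearrangement implicit (``which can be arranged as stated''), and your explicit bookkeeping and numerical check simply fill in that elementary step.
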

In particular,
\begin{gather}
F_k^2 F_{2k + 3}  + F_{k + 1}^2 F_{2k + 2}  = F_{k + 2}^2 F_{2k + 1},\label{eq.metcxcl} \\
L_k^2 F_{2k + 3}  + L_{k + 1}^2 F_{2k + 2}  = L_{k + 2}^2 F_{2k + 1}\label{eq.jr1y16g} .
\end{gather}
Subtraction of~\eqref{eq.metcxcl} from~\eqref{eq.jr1y16g} gives
\begin{equation}
F_{k - 1} F_{k + 1} F_{2k + 3}  + F_k F_{k + 2} F_{2k + 2}  = F_{k + 1} F_{k + 3} F_{2k + 1},
\end{equation}
while their addition yields
\begin{equation}
(F_{k + 1}^2  + F_{k - 1}^2 )F_{2k + 3}  + (F_{k + 2}^2  + F_k^2 )F_{2k + 2}  = (F_{k + 3}^2  + F_{k + 1}^2 )F_{2k + 1}.
\end{equation}
Before closing this section, we bring forth a Candido-type identity of R.~S.~Melham and discover new identities from it. Melham~\cite[Theorem 1]{melham04} has shown that:
\begin{equation}\label{eq.zi7ygj7}
6\left( {\sum_{j = 0}^{2n - 1} {G_{k + j}^2 } } \right)^2  = F_{2n}^2 \left( {G_{k + n - 2}^4  + 4G_{k + n - 1}^4  + 4G_{k + n}^4  + G_{k + n + 1}^4 } \right);
\end{equation}
from which, writing $f(2n)$ for $F_{2n}$, $g(k + n - 2)$ for $G_{k + n - 2}$, etc.~, and differentiating with respect to $k$, we have
\begin{equation}
\begin{split}
&\left( {12\sum_{j = 0}^{2n - 1} {g(k + j)^2 } } \right)\sum_{j = 0}^{2n - 1} {2g(k + j)\frac{\partial }{{\partial k}}g(k + j)} \\
&\qquad= f(2n)^2 \left( {4g(k + n - 2)^3 \frac{\partial }{{\partial k}}g(k + n - 2)} \right. + 16g(k + n - 1)^3 \frac{\partial }{{\partial k}}g(k + n - 1)\\
&\qquad\left. { + 16g(k + n)^3 \frac{\partial }{{\partial k}}g(k + n) + 4g(k + n + 1)^3 \frac{\partial }{{\partial k}}g(k + n + 1)} \right).
\end{split}
\end{equation}
Taking the real part according to the prescription of steps~\ref{step2g} and~\ref{step3g}, using~\eqref{eq.jp0f8zj} to replace the derivatives, we obtain the result stated in the next proposition.
\begin{proposition}
If $n$ is a non-negative integer and $k$ is any integer, then
\begin{equation}
\begin{split}
&6 {\sum_{j = 0}^{2n - 1} {G_{j + k}^2 } } {\sum_{j = 0}^{2n - 1} {G_{j + k} (G_{j + k + 1}  + G_{j + k - 1} )} } \\
&\qquad = F_{2n}^2 \left( {G_{k + n - 2}^3 (G_{k + n - 1}  + G_{k + n - 3} ) + 4G_{k + n - 1}^3 (G_{k + n}  + G_{k + n - 2} )} \right.\\
&\qquad\quad\left. { + 4G_{k + n}^3 (G_{k + n + 1}  + G_{k + n - 1} ) + G_{k + n + 1}^3 (G_{k + n + 2}  + G_{k + n} )} \right).
\end{split}
\end{equation}
\end{proposition}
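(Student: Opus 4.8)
The plan is to apply the first-component method of Section~\ref{sec.method}, extended to the gibonacci sequence in Section~\ref{sec.h7yipve}, taking $k$ as the free index of interest in Melham's identity~\eqref{eq.zi7ygj7}. Following step~\ref{step1g}, I would replace every gibonacci number $G_{k+j}$, $G_{k+n-2}$, \dots\ by the corresponding value $g(k+j)$, $g(k+n-2)$, \dots\ of the gibonacci function, with $k$ now a real variable; the factor $F_{2n}^2$ is constant with respect to $k$ and is left untouched. This turns~\eqref{eq.zi7ygj7} into the functional identity displayed immediately before the proposition.

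Following step~\ref{step2g}, I would then differentiate both sides with respect to $k$ by the product and chain rules. On the left, $6\bigl(\sum_{j=0}^{2n-1}g(k+j)^2\bigr)^2$ gives $\bigl(12\sum_{j=0}^{2n-1}g(k+j)^2\bigr)\sum_{j=0}^{2n-1}2g(k+j)\tfrac{\partial}{\partial k}g(k+j)$; on the right, $F_{2n}^2$ survives and each quartic in the bracket contributes, via the chain rule, a factor $4g(\cdot)^3\tfrac{\partial}{\partial k}g(\cdot)$ (the outer coefficients $1,4,4,1$ riding along) --- this is exactly the differentiated equation shown in the excerpt. For step~\ref{step3g} I would take the real part of the whole equation, using~\eqref{eq.h54si8d} to move the derivative-free gibonacci factors outside $\Re$ and~\eqref{eq.jp0f8zj} to replace each $\Re\tfrac{\partial}{\partial k}g(h(k,\cdots))$ by $\dfrac{G_{h(k+1,\cdots)}+G_{h(k-1,\cdots)}}{\sqrt5}\ln\alpha$. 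A short bookkeeping then shows that the left side collapses to $\tfrac{24\ln\alpha}{\sqrt5}$ times the product of the two sums appearing in the proposition, while the right side collapses to $\tfrac{4\ln\alpha}{\sqrt5}$ times $F_{2n}^2$ times the bracketed quartic expression; cancelling the common nonzero factor $\tfrac{4\ln\alpha}{\sqrt5}$ and rearranging yields the stated identity.

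Since every step is mechanical once the substitution prescriptions are accepted, there is no genuine analytic obstacle in this derivation. The only points requiring care are the routine bookkeeping of the constant prefactors $\ln\alpha$ and $\sqrt5$, so that they cancel cleanly and uniformly across both sides, and --- at a deeper level --- the validity of the prescriptions~\eqref{eq.h54si8d} and~\eqref{eq.jp0f8zj} themselves, namely that differentiating a true gibonacci identity in a free index and then applying these replacements again produces a true gibonacci identity; this last point is the subject of Section~\ref{sec.justification} and is taken here as established.
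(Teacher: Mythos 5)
Your proposal is correct and follows essentially the same route as the paper: differentiate the gibonacci-function form of Melham's identity~\eqref{eq.zi7ygj7} with respect to $k$, take the real part, and apply~\eqref{eq.h54si8d} and~\eqref{eq.jp0f8zj}, after which the common factor $\tfrac{4\ln\alpha}{\sqrt5}$ cancels to give the stated identity. The bookkeeping of the constants ($24$ on the left, $4,16,16,4$ on the right, both reducing by the factor $4\ln\alpha/\sqrt5$ to the coefficients $6$ and $1,4,4,1$) checks out.
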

In particular,
\begin{equation}\label{eq.v8jelu6}
\begin{split}
6\sum_{j = 0}^{2n - 1} {F_{j + k}^2 } \sum_{j = 0}^{2n - 1} {F_{2j + 2k} }  &= F_{2n}^2 \left( {F_{k + n - 2}^2 F_{2(k + n - 2)}  + 4F_{k + n - 1}^2 F_{2(k + n - 1)} } \right. \\
&\qquad\left. { + 4F_{k + n}^2 F_{2(k + n)}  + F_{k + n + 1}^2 F_{2(k + n + 1)} } \right)
\end{split}
\end{equation}
and
\begin{equation}\label{eq.mpnlzyd}
\begin{split}
6\sum_{j = 0}^{2n - 1} {L_{j + k}^2 } \sum_{j = 0}^{2n - 1} {F_{2j + 2k} }  &= F_{2n}^2 \left( {L_{k + n - 2}^2 F_{2(k + n - 2)}  + 4L_{k + n - 1}^2 F_{2(k + n - 1)} } \right. \\
&\qquad\left. { + 4L_{k + n}^2 F_{2(k + n)}  + L_{k + n + 1}^2 F_{2(k + n + 1)} } \right).
\end{split}
\end{equation}
Subtraction of~\eqref{eq.v8jelu6} from~\eqref{eq.mpnlzyd} gives
\begin{equation}
\begin{split}
&6\sum_{j = 0}^{2n - 1} {F_{j + k + 1} F_{j + k - 1} } \sum_{j = 0}^{2n - 1} {F_{2j + 2k} } \\
&\qquad = F_{2n}^2 \left( {F_{k + n - 1} F_{k + n - 3} F_{2(k + n - 2)}  + 4F_{k + n} F_{k + n - 2} F_{2(k + n - 1)} } \right.\\
&\qquad\quad\left. { + 4F_{k + n + 1} F_{k + n - 1} F_{2(k + n)}  + F_{k + n + 2} F_{k + n} F_{2(k + n + 1)} } \right).
\end{split}
\end{equation}

\subsection{New identities from the Gelin-Ces\`aro identity}
The Gelin-Ces\`aro identity
\begin{equation}
F_{k - 2}F_{k - 1}F_{k + 1}F_{k + 2}=F_k^4 - 1
\end{equation}
has the following generalization (Horadam and Shannon~\cite[Identity~(2.5), $q=-1$]{horadam87}):
\begin{equation}
W_{k - 2}W_{k - 1}W_{k + 1}W_{k + 2}=W_k^4 + (-1)^k\gamma_W W_k^2 - \delta_W^2;
\end{equation}
where $\gamma_W=e_W(p^2 - 1)$, $\delta_W=e_Wp$ and $e_W=pW_0W_1 + W_0^2 - W_1^2$.

\bigskip

For the sequence of Lucas numbers, $\gamma_L=0$ and $e_L=5=\delta_L$, so that
\begin{equation}
L_{k - 2}L_{k - 1}L_{k + 1}L_{k + 2}=L_k^4 - 25;
\end{equation}
while for the gibonacci sequence, $\gamma_G=0$, $\delta_G=e_G=G_0G_1 +G_0^2 -G_1^2$ and
\begin{equation}
G_{k - 2}G_{k - 1}G_{k + 1}G_{k + 2}=G_k^4 - e_G^2.
\end{equation}
Writing
\begin{equation*}
w(k - 2)w(k - 1)w(k + 1)w(k + 2) = w(k)^4  + ( - 1)^k \gamma w(k)^2  - \delta_W^2
\end{equation*}
and differentiating with respect to $k$ and making use of~\eqref{eq.ri3mwyt} and~\eqref{eq.q9r3ffo} from section~\ref{sec.h7yipve} yields the result stated in the next proposition.
\begin{proposition}
For every integer $k$,
\begin{equation}\label{eq.oahojxv}
\begin{split}
&(W_{k - 1}  + W_{k - 3} )W_{k - 1} W_{k + 1} W_{k + 2}\\
&\quad  + W_{k - 2} (W_k  + W_{k - 2} )W_{k + 1} W_{k + 2}\\ 
&\quad + W_{k - 2} W_{k - 1} (W_k  + W_{k + 2} )W_{k + 2} \\
&\qquad + W_{k - 2} W_{k - 1} W_{k + 1} (W_{k + 3}  + W_{k + 1} )\\ 
& = 2W_k (W_{k + 1}  + W_{k - 1} )(2W_k^2  + ( - 1)^k \gamma_W ).
\end{split}
\end{equation}
\end{proposition}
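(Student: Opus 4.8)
The plan is to apply the first component of the method, in the generalized-Fibonacci form of section~\ref{sec.h7yipve}, directly to the generalized Gelin-Ces\`aro identity $W_{k-2}W_{k-1}W_{k+1}W_{k+2} = W_k^4 + (-1)^k\gamma_W W_k^2 - \delta_W^2$, taking $k$ as the free index of interest. First I would carry out step~\ref{step1g}, replacing each $W_j$ by $w(j)$ and regarding $k$ as a real variable, so that the identity reads $w(k-2)w(k-1)w(k+1)w(k+2) = w(k)^4 + (-1)^k\gamma_W w(k)^2 - \delta_W^2$, and then step~\ref{step2g}, differentiating this functional identity through with respect to $k$.

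On the left-hand side the product rule yields four terms, one for each factor being differentiated; on the right-hand side the constant $-\delta_W^2$ disappears, $w(k)^4$ contributes $4w(k)^3 w'(k)$, and the middle term must be differentiated as a product of $(-1)^k = e^{i\pi k}$ and $\gamma_W w(k)^2$, producing $i\pi(-1)^k\gamma_W w(k)^2 + 2(-1)^k\gamma_W w(k)w'(k)$. Then, following step~\ref{step3g}, I would take the real part of the whole equation, using \eqref{eq.ri3mwyt} to turn each factor $w(j)$ at an integer argument into $W_j$ and \eqref{eq.q9r3ffo} to replace each derivative $w'(k+c)$ by $\Re w'(k+c) = (W_{k+c+1}+W_{k+c-1})\Delta^{-1}\ln\tau$. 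In particular the four derivatives on the left become, respectively, $(W_{k-1}+W_{k-3})$, $(W_k+W_{k-2})$, $(W_{k+2}+W_k)$ and $(W_{k+3}+W_{k+1})$ times $\Delta^{-1}\ln\tau$, while $\Re w'(k) = (W_{k+1}+W_{k-1})\Delta^{-1}\ln\tau$; the imaginary contribution $i\pi(-1)^k\gamma_W W_k^2$ is killed by $\Re$. Cancelling the common factor $\Delta^{-1}\ln\tau$ from both sides and then factoring $2W_k(W_{k+1}+W_{k-1})$ out of the right-hand side gives exactly \eqref{eq.oahojxv} after rearrangement.

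The one place that requires care — and which I expect to be the only real obstacle — is the sign factor $(-1)^k$: under the passage to a real variable it must be read as $e^{i\pi k}$, so its derivative contributes the spurious imaginary term $i\pi(-1)^k\gamma_W w(k)^2$. This term is harmless precisely because $\gamma_W w(k)^2$ is real once evaluated at the integer $k$, so the term is purely imaginary and is annihilated when the real part is taken (equivalently, by the rule $i\to 0$ in the Remark of section~\ref{sec.method}). Everything else is a mechanical application of the product rule together with the substitution rules \eqref{eq.ri3mwyt} and \eqref{eq.q9r3ffo}, so no further ideas are needed.
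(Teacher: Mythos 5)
Your proposal is correct and follows exactly the paper's own route: write the generalized Gelin-Cesàro identity in the function form $w(k-2)w(k-1)w(k+1)w(k+2)=w(k)^4+(-1)^k\gamma_W w(k)^2-\delta_W^2$, differentiate with respect to $k$, and take the real part using \eqref{eq.ri3mwyt} and \eqref{eq.q9r3ffo}, the term $i\pi(-1)^k\gamma_W W_k^2$ being annihilated by $\Re$. The paper states this only in one terse sentence, so your more explicit treatment of the $(-1)^k=e^{i\pi k}$ differentiation is a welcome elaboration rather than a deviation.
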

In particular,
\begin{equation}
\begin{split}
&(G_{k - 1}  + G_{k - 3} )G_{k - 1} G_{k + 1} G_{k + 2}\\
&\quad  + G_{k - 2} (G_k  + G_{k - 2} )G_{k + 1} G_{k + 2}\\ 
&\quad + G_{k - 2} G_{k - 1} (G_k  + G_{k + 2} )G_{k + 2} \\
&\qquad + G_{k - 2} G_{k - 1} G_{k + 1} (G_{k + 3}  + G_{k + 1} )\\ 
& = 4G_k^3 (G_{k + 1}  + G_{k - 1} );
\end{split}
\end{equation}
with the special cases
\begin{equation}\label{eq.p81g9fx}
F_{k + 1} F_{k + 2} F_{2k - 3}  + F_{k - 1} F_{k - 2} F_{2k + 3}  = 2F_k^3 L_k  = 2F_k^2 F_{2k}
\end{equation}
and
\begin{equation}\label{eq.ujkt1we}
L_{k + 1} L_{k + 2} F_{2k - 3}  + L_{k - 1} L_{k - 2} F_{2k + 3}  = 2L_k^3 F_k  = 2L_k^2 F_{2k};
\end{equation}
where, to arrive at~\eqref{eq.p81g9fx} and~\eqref{eq.ujkt1we}, we used
\begin{equation*}
F_{k + 1} + F_{k - 1}=L_k,\quad L_{k + 1} + L_{k - 1}=5F_k,
\end{equation*}
and~\cite[Identity (16a)]{vajda}
\begin{equation*}
L_mF_n + L_nF_m =2F_{m + n}.
\end{equation*}
Substituting $k + 2$ for $k$ and arranging~\eqref{eq.p81g9fx} and~\eqref{eq.ujkt1we} as
\begin{equation*}
\frac{{F_{2k + 1} }}{{F_k F_{k + 1} }} + \frac{{F_{2k + 7} }}{{F_{k + 3} F_{k + 4} }} = \frac{{2F_{k + 2}^2 F_{2k + 4} }}{{F_{k + 2}^4  - 1}}
\end{equation*}
and
\begin{equation*}
\frac{{F_{2k + 1} }}{{L_k L_{k + 1} }} + \frac{{F_{2k + 7} }}{{L_{k + 3} L_{k + 4} }} = \frac{{2L_{k + 2}^2 F_{2k + 4} }}{{L_{k + 2}^4  - 25}};
\end{equation*}
and the use of the telescoping summation formula
\begin{equation*}
\sum_{k = 1}^n {( - 1)^{k - 1} \left( {f_k  + ( - 1)^{m - 1} f_{k + m} } \right)}  = \sum_{k = 1}^m {( - 1)^{k - 1} f_k }  + ( - 1)^{n - 1} \sum_{k = 1}^m {( - 1)^{k - 1} f_{k + n} }
\end{equation*}
yields the summation identities stated in the next proposition.
\begin{proposition}
If $n$ is a non-negative integer, then
\begin{gather}
\sum_{k = 1}^n {\frac{{( - 1)^{k - 1} F_{k + 2}^2 F_{2k + 4} }}{{F_{k + 2}^4  - 1}}}  = \frac{5}{6} + \frac{{( - 1)^{n - 1} }}{2}\left( {\frac{{F_{2n + 3} }}{{F_{n + 1} F_{n + 2} }} - \frac{{F_{2n + 5} }}{{F_{n + 2} F_{n + 3} }} + \frac{{F_{2n + 7} }}{{F_{n + 3} F_{n + 4} }}} \right)\\
\sum_{k = 1}^n {\frac{{( - 1)^{k - 1} L_{k + 2}^2 F_{2k + 4} }}{{L_{k + 2}^4  - 25}}}  = \frac{5}{{14}} + \frac{{( - 1)^{n - 1} }}{2}\left( {\frac{{F_{2n + 3} }}{{L_{n + 1} L_{n + 2} }} - \frac{{F_{2n + 5} }}{{L_{n + 2} L_{n + 3} }} + \frac{{F_{2n + 7} }}{{L_{n + 3} L_{n + 4} }}} \right);
\end{gather}
with
\begin{gather}
\sum_{k = 1}^\infty {\frac{{( - 1)^{k - 1} F_{k + 2}^2 F_{2k + 4} }}{{F_{k + 2}^4  - 1}}}  = \frac56, \\
\sum_{k = 1}^\infty {\frac{{( - 1)^{k - 1} L_{k + 2}^2 F_{2k + 4} }}{{L_{k + 2}^4  - 25}}}  = \frac 5{14} .
\end{gather}
\end{proposition}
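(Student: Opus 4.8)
The plan is to derive both finite-sum formulas directly from the two identities~\eqref{eq.p81g9fx} and~\eqref{eq.ujkt1we} of the preceding proposition, combined with the Gelin--Ces\`aro identities $F_{j-2}F_{j-1}F_{j+1}F_{j+2}=F_j^4-1$ and $L_{j-2}L_{j-1}L_{j+1}L_{j+2}=L_j^4-25$, and then to pass to the limit. First I would substitute $k\mapsto k+2$ in~\eqref{eq.p81g9fx}, obtaining $F_{k+3}F_{k+4}F_{2k+1}+F_{k+1}F_kF_{2k+7}=2F_{k+2}^2F_{2k+4}$, and divide through by $F_kF_{k+1}F_{k+3}F_{k+4}$, which by Gelin--Ces\`aro with $j=k+2$ equals $F_{k+2}^4-1$; this gives
\[
\frac{F_{2k+1}}{F_kF_{k+1}}+\frac{F_{2k+7}}{F_{k+3}F_{k+4}}=\frac{2F_{k+2}^2F_{2k+4}}{F_{k+2}^4-1}.
\]
The same substitution applied to~\eqref{eq.ujkt1we}, divided by $L_kL_{k+1}L_{k+3}L_{k+4}=L_{k+2}^4-25$, yields the Lucas analogue, with the inner Fibonacci factors in the two left-hand denominators replaced by Lucas numbers.

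Next I would put $f_k=F_{2k+1}/(F_kF_{k+1})$ (respectively $f_k=F_{2k+1}/(L_kL_{k+1})$). The left side of the display above is then exactly $f_k+(-1)^{m-1}f_{k+m}$ with $m=3$, since $(-1)^{m-1}=+1$. Multiplying by $(-1)^{k-1}$, summing over $k=1,\dots,n$, and invoking the telescoping formula stated just before the proposition collapses the sum to $\sum_{k=1}^3(-1)^{k-1}f_k+(-1)^{n-1}\sum_{k=1}^3(-1)^{k-1}f_{k+n}$. The first block is a one-time finite computation from small values: $f_1-f_2+f_3=\tfrac21-\tfrac52+\tfrac{13}{6}=\tfrac53$ in the Fibonacci case and $\tfrac23-\tfrac5{12}+\tfrac{13}{28}=\tfrac57$ in the Lucas case; the second block, $f_{n+1}-f_{n+2}+f_{n+3}$, is precisely the parenthesized expression in the two claimed identities. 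Since the right-hand side of the display carries a factor $2$, dividing by $2$ yields the constants $\tfrac56$ and $\tfrac5{14}$ together with the stated boundary terms, which completes the finite-sum formulas.

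The step I expect to be the real obstacle is the ``limiting case'' $n\to\infty$. In contrast to the boundary term $L_{2(n+1)}/(F_{2(n+1)}^2+1)$ in the earlier telescoped proposition, the present boundary term does not tend to $0$: from $F_{2m+1}=F_{m+1}^2+F_m^2$ one gets $f_m=F_{m+1}/F_m+F_m/F_{m+1}\to\alpha+\alpha^{-1}=\sqrt5$ (and $f_m\to 1/\sqrt5$ in the Lucas case), so $f_{n+1}-f_{n+2}+f_{n+3}\to\sqrt5$ (resp.\ $1/\sqrt5$); hence the general term of the series does not go to zero, and the partial sums $S_n$ oscillate about $\tfrac56$ (resp.\ $\tfrac5{14}$) with amplitude tending to $\tfrac{\sqrt5}{2}$ (resp.\ $\tfrac1{2\sqrt5}$). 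The claimed infinite sums must therefore be interpreted as Abel (equivalently Ces\`aro) sums: one checks that $(1-x)\sum_{n\ge1}S_nx^n\to\tfrac56$ (resp.\ $\tfrac5{14}$) as $x\to1^-$, the constant part of $S_n$ supplying the limit and the bounded oscillating part being annihilated by the factor $1-x$. I would record this caveat in a remark; granting it, the limiting cases follow at once from the finite-sum formulas.
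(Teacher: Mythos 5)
Your derivation of the two finite-sum formulas is exactly the paper's route: substitute $k+2$ for $k$ in~\eqref{eq.p81g9fx} and~\eqref{eq.ujkt1we}, divide by the Gelin--Ces\`aro products $F_kF_{k+1}F_{k+3}F_{k+4}=F_{k+2}^4-1$ and $L_kL_{k+1}L_{k+3}L_{k+4}=L_{k+2}^4-25$, and apply the stated telescoping formula with $m=3$; your evaluations $f_1-f_2+f_3=\tfrac53$ (Fibonacci case) and $\tfrac57$ (Lucas case) are correct and give the constants $\tfrac56$ and $\tfrac5{14}$ after halving.

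Your flag on the limiting case is not a gap in your argument but a correct criticism of the proposition as stated. Since $F_{2k+4}=F_{k+2}L_{k+2}$, the general term satisfies $F_{k+2}^2F_{2k+4}/(F_{k+2}^4-1)\sim L_{k+2}/F_{k+2}\to\sqrt5$ (and $\to 1/\sqrt5$ in the Lucas case), so neither infinite series converges in the ordinary sense; the partial sums oscillate, with limit points $\tfrac56\pm\tfrac{\sqrt5}{2}$ and $\tfrac5{14}\pm\tfrac{1}{2\sqrt5}$ respectively. The paper asserts the infinite-sum versions without justification, in contrast to the earlier telescoped proposition, whose limit is legitimate because its boundary term $L_{2(n+1)}/(F_{2(n+1)}^2+1)$ tends to $0$. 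Your Abel/Ces\`aro reading does recover $\tfrac56$ and $\tfrac5{14}$ (the oscillating part of $S_n$ is bounded and is killed by averaging), and recording that caveat in a remark is the right repair; as literal statements about convergent series, the last two displays of the proposition are false.
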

Arranging~\eqref{eq.oahojxv} as
\begin{equation}
\begin{split}
&\frac{{W_{j - 1}  + W_{j - 3} }}{{W_{j - 2} }} + \frac{{W_j  + W_{j - 2} }}{{W_{j - 1} }} + \frac{{W_{j + 2}  + W_j }}{{W_{j + 1} }} + \frac{{W_{j + 3}  + W_{j + 1} }}{{W_{j + 2} }}\\
&\qquad = \frac{{2W_j (W_{j + 1}  + W_{j - 1} )(2W_j^3  + ( - 1)^j \gamma_W )}}{{W_{j - 2} W_{j - 1} W_{j + 1} W_{j + 2} }}
\end{split}
\end{equation}
and summing produces the next result.
\begin{proposition}
If $n$ and $k$ are integers then,
\begin{equation}
\begin{split}
&\sum_{j = 1}^n {\frac{{( - 1)^{j - 1} 2W_{j + k} (W_{j + k + 1}  + W_{j + k - 1} )(2W_{j + k}^2  + ( - 1)^{j + k} \gamma_W )}}{{W_{j + k - 2} W_{j + k - 1} W_{j + k + 1} W_{j + k + 2} }}}\\ 
&\qquad = ( - 1)^{n + 1} \frac{{W_{n + k}  + W_{n + k - 2} }}{{W_{n + k - 1} }} + \frac{{W_k  + W_{k - 2} }}{{W_{k - 1} }}\\
&\qquad\qquad + ( - 1)^{n + 1} \frac{{W_{n + k + 3}  + W_{n + k + 1} }}{{W_{n + k + 2} }} + \frac{{W_{k + 3}  + W_{k + 1} }}{{W_{k + 2} }};
\end{split}
\end{equation}
provided none of the denominators vanishes.
\end{proposition}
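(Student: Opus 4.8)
The plan is to read off the proposition by specializing and then summing the rearranged form of~\eqref{eq.oahojxv} displayed just above the statement. First I would replace $j$ by $j+k$ in that display, so that its left-hand side becomes a sum of four quotients of the shape $\left(W_{m+1}+W_{m-1}\right)/W_m$ with $m$ ranging over $\{j+k-2,\,j+k-1,\,j+k+1,\,j+k+2\}$, while its right-hand side is precisely the summand occurring in the proposition (the term $2W_j^3$ printed above is a typographical slip for $2W_j^2$, in agreement with~\eqref{eq.oahojxv}). Writing $b_m=\left(W_{m+1}+W_{m-1}\right)/W_m$, the identity to be summed is
\[
b_{j+k-2}+b_{j+k-1}+b_{j+k+1}+b_{j+k+2}=\frac{2W_{j+k}(W_{j+k+1}+W_{j+k-1})(2W_{j+k}^2+(-1)^{j+k}\gamma_W)}{W_{j+k-2}W_{j+k-1}W_{j+k+1}W_{j+k+2}}.
\]

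Next I would multiply through by $(-1)^{j-1}$ and sum over $j=1,\dots,n$. By construction the right-hand side becomes the left-hand side of the proposition, so everything reduces to evaluating $\sum_{j=1}^{n}(-1)^{j-1}\left(b_{j+k-2}+b_{j+k-1}+b_{j+k+1}+b_{j+k+2}\right)$. The decisive point is that the four-term block breaks into two \emph{consecutive} pairs: with $c_j=b_{j+k-2}$ one has $c_j+c_{j+1}=b_{j+k-2}+b_{j+k-1}$, and with $d_j=b_{j+k+1}$ one has $d_j+d_{j+1}=b_{j+k+1}+b_{j+k+2}$. Hence the sum equals $\sum_{j=1}^{n}(-1)^{j-1}(c_j+c_{j+1})+\sum_{j=1}^{n}(-1)^{j-1}(d_j+d_{j+1})$, and each piece is collapsed by the telescoping formula quoted immediately before the proposition, taken with $m=1$, which gives $\sum_{j=1}^{n}(-1)^{j-1}(f_j+f_{j+1})=f_1+(-1)^{n-1}f_{n+1}$.

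To finish I would substitute the boundary values $c_1=b_{k-1}=\left(W_k+W_{k-2}\right)/W_{k-1}$, $c_{n+1}=b_{n+k-1}=\left(W_{n+k}+W_{n+k-2}\right)/W_{n+k-1}$, $d_1=b_{k+2}=\left(W_{k+3}+W_{k+1}\right)/W_{k+2}$, $d_{n+1}=b_{n+k+2}=\left(W_{n+k+3}+W_{n+k+1}\right)/W_{n+k+2}$, and use $(-1)^{n-1}=(-1)^{n+1}$; collecting the four boundary contributions reproduces the right-hand side of the proposition verbatim. The hypothesis that no denominator vanishes is exactly what makes every $b_m$, hence every summand, well defined, and it is already part of the statement.

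I do not expect a genuine obstacle here; the one place that demands care is the index bookkeeping in the pairing — one must verify that the ``gap'' between the second and third terms of the block (from $b_{j+k-1}$ to $b_{j+k+1}$) does not spoil the telescoping, and it does not, precisely because the alternating sign lets the two pairs telescope independently rather than merging into a single longer chain.
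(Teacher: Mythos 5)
Your proposal is correct and follows exactly the paper's route: the paper likewise rearranges~\eqref{eq.oahojxv} into the four-quotient form and then ``sums,'' and your explicit pairing of the block into the two consecutive pairs $b_{j+k-2}+b_{j+k-1}$ and $b_{j+k+1}+b_{j+k+2}$, each collapsed by the quoted telescoping formula with $m=1$, supplies precisely the bookkeeping the paper leaves implicit. Your observation that $2W_j^3$ in the displayed rearrangement is a typographical slip for $2W_j^2$ is also correct.
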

\subsection{New identities from a reciprocal series of Fibonacci numbers with subscripts $k2^j$}
In this section we apply our method to discover new results associated with the following identity of Rabinowitz~\cite[Equation (4)]{rabinowitz98}:
\begin{equation}
\sum_{j = 0}^n {\frac{1}{{U_{k2^j } }}}  = \frac{{1 + U_{k - 1} }}{{U_k }} + \frac{{1 - ( - 1)^k }}{{U_{2k} }} - \frac{{U_{k2^n  - 1} }}{{U_{k2^n } }}.
\end{equation}
Writing
\begin{equation}
\sum_{j = 0}^n {\frac{1}{{u(k2^j ) }}}  = \frac{{1 + u(k - 1) }}{{u(k) }} + \frac{{1 - ( - 1)^k }}{{u(2k) }} - \frac{{u(k2^n  - 1) }}{{u(k2^n) }},
\end{equation}
and differentiating with respect to $k$ gives
\begin{equation*}
\begin{split}
\sum_{j = 0}^n {\frac{{ - 2^j}}{{u(k2^j )^2 }}\frac{d}{dk}u(k2^j )}  &= \frac{1}{{u(k)}}\frac{d}{{dk}}u(k - 1) - \frac{{(1 + u(k - 1))}}{{u(k)^2 }}\frac{d}{{dk}}u(k)\\
&\qquad - \frac{{( - 1)^k \pi i}}{{u(2k)}} - \frac{{2(1 - ( - 1)^k )}}{{u(2k)^2 }}\frac{d}{{dk}}u(2k)\\
&\qquad\; - \frac{{2^n }}{{u(k2^n )}}\frac{\partial }{{\partial k}}u(k2^n  - 1) + \frac{{2^n u(k2^n  - 1)}}{{u(k2^n )^2 }}\frac{\partial }{{\partial k}}u(k2^n ).
\end{split}
\end{equation*}
Taking the real part while using~\eqref{eq.uv52rhd}--\eqref{eq.gwl4lx2}, we have the next result.
\begin{proposition}\label{prop.dfkkobu}
If $n$ and $k$ are positive integers, then
\begin{gather*}
\sum_{j = 0}^n {\frac{{2^j V_{k2^j } }}{{U_{k2^j }^2 }}}  = \frac{(-1)^k2 + V_k }{{U_k^2 }} + \frac{{2(1 - ( - 1)^k )V_{2k} }}{{U_{2k}^2 }} - \frac{2^{n + 1}}{U_{k2^n }^2 },\\
\sum_{j = 0}^\infty {\frac{{2^j V_{k2^j } }}{{U_{k2^j }^2 }}}  = \frac{(-1)^k2 + V_k }{{U_k^2 }} + \frac{{2(1 - ( - 1)^k )V_{2k} }}{{U_{2k}^2 }} .
\end{gather*}
\end{proposition}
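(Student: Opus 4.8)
The plan is to apply the method (first component) for the special Lucas sequences in exactly the mechanical form displayed just before the statement: the $u$-function version of Rabinowitz's identity has already been written down and differentiated with respect to the free index $k$, so it remains only to take the real part of the differentiated equation, tidy it up, and then let $n\to\infty$.

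First I would take real parts of the differentiated identity. The key point is that every \emph{bare} occurrence of $u$ in it is evaluated at an integer argument — one of $k2^{j}$, $k$, $k-1$, $2k$, $k2^{n}$, $k2^{n}-1$ — and hence coincides with the corresponding (real) term of $(U_{m})$; thus \eqref{eq.uv52rhd} applies to every product $u(\,\cdot\,)Y$, and the stray term $-(-1)^{k}\pi i/u(2k)=-(-1)^{k}\pi i/U_{2k}$ contributes nothing since $U_{2k}$ is real. For each derivative I would substitute according to \eqref{eq.tnsd6nm}, i.e.\ $\Re u'\mapsto(V/\Delta)\ln\tau$. After cancelling the common nonzero factor $(\ln\tau)/\Delta$ — nonzero because $p\neq0$ forces $\tau\neq1$ — and multiplying through by $-1$, one is left with
\begin{equation*}
\sum_{j=0}^{n}\frac{2^{j}V_{k2^{j}}}{U_{k2^{j}}^{2}}
=-\frac{V_{k-1}}{U_{k}}+\frac{(1+U_{k-1})V_{k}}{U_{k}^{2}}
+\frac{2(1-(-1)^{k})V_{2k}}{U_{2k}^{2}}
+\frac{2^{n}V_{k2^{n}-1}}{U_{k2^{n}}}-\frac{2^{n}U_{k2^{n}-1}V_{k2^{n}}}{U_{k2^{n}}^{2}}.
\end{equation*}

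Next I would collapse the right-hand side using the d'Ocagne-type relation $U_{a-1}V_{a}-U_{a}V_{a-1}=2(-1)^{a}$, which is immediate from the closed forms of $U$ and $V$. Writing the first two terms over $U_{k}^{2}$ and applying this with $a=k$ turns them into $\bigl((-1)^{k}2+V_{k}\bigr)/U_{k}^{2}$; writing the last two over $U_{k2^{n}}^{2}$ and applying it with $a=k2^{n}$ — using also that $k2^{n}$ is even, since $n\ge1$ — turns them into $-2^{n+1}/U_{k2^{n}}^{2}$. This yields the first displayed identity. I expect this sign-tracking and telescoping-style simplification to be the only step requiring real care; everything preceding it is routine differentiation and extraction of real parts.

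Finally, for the infinite series I would pass to the limit $n\to\infty$ in the finite identity just obtained. Because $p\neq0$, the two characteristic roots of $t^{2}=pt+1$ have distinct moduli, exactly one of which exceeds $1$, so $|U_{m}|\to\infty$; in fact $U_{k2^{n}}^{2}$ grows doubly exponentially in $n$, whence $2^{n+1}/U_{k2^{n}}^{2}\to0$ and the series $\sum_{j\ge0}2^{j}V_{k2^{j}}/U_{k2^{j}}^{2}$ converges (its general term decays at the same doubly-exponential rate). Taking the limit then gives the stated closed form for the infinite sum.
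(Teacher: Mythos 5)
Your proposal is correct and follows essentially the same route as the paper: differentiate the $u$-function form of Rabinowitz's identity with respect to $k$, take real parts via \eqref{eq.uv52rhd}--\eqref{eq.gwl4lx2}, and collapse the boundary terms using $U_rV_s - V_rU_s = (-1)^s2U_{r-s}$ (your d'Ocagne-type relation is the case $r=s-1$ of the identity the paper cites). Your explicit justification of the $n\to\infty$ limit is a detail the paper leaves implicit.
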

Note that in arriving at the final form of the first expression in Proposition~\ref{prop.dfkkobu}, we used
\begin{equation*}
U_rV_s - V_rU_s = (-1)^s2U_{r - s}.
\end{equation*}
In particular, we have
\begin{equation*}
\sum_{j = 0}^n {\frac{{2^j V_{2^j } }}{{U_{2^j }^2 }}}  = p + \frac{{2\Delta ^2 }}{{p^2 }} - \frac{{2^{n + 1} }}{{U_{2^n }^2 }}
\end{equation*}
and
\begin{equation*}
\sum_{j = 0}^n {\frac{{2^j V_{2^{j + 1} } }}{{U_{2^{j + 1} }^2 }}}  = \frac{{\Delta ^2 }}{{p^2 }} - \frac{{2^{n + 1} }}{{U_{2^{n + 1} }^2 }};
\end{equation*}
with the special cases
\begin{equation*}
\sum_{j = 0}^n {\frac{{2^j L_{2^j } }}{{F_{2^j }^2 }}}  = 11 - \frac{{2^{n + 1} }}{{F_{2^n }^2 }}
\end{equation*}
and
\begin{equation*}
\sum_{j = 0}^n {\frac{{2^j L_{2^{j + 1} } }}{{F_{2^{j + 1} }^2 }}}  = 5 - \frac{{2^{n + 1} }}{{F_{2^{n + 1} }^2 }}.
\end{equation*}
\section{Justification of the method}\label{sec.justification}
In this section we provide the rationale behind the method that was described in section~\ref{sec.method}. To facilitate the discussion, we need the closed formula for the generalized Fibonacci sequence $(W_j)$.
\subsection{Closed formula for the generalized Fibonacci sequence}
Standard methods for solving difference equations give the closed (Binet) formula of the generalized Fibonacci sequence  $(W_j)$ defined by the recurrence relation~\eqref{eq.k54ys0s}, in the non-degenerate case, $p^2 + 4 > 0$, as
\begin{equation}\label{Binet}
W_j = A\tau^j + B\sigma^j, 
\end{equation}
where
\begin{equation}\label{eq.n2r2300}
A = \frac{{W_1 - W_0 \sigma }}{{\tau  - \sigma }},\quad B = \frac{{W_0\tau  - W_1}}{{\tau  - \sigma }},
\end{equation}
with
\begin{equation}\label{eq.qu7nzq5}
\tau = \frac{p+\sqrt{p^2 + 4}}{2}, \qquad \sigma = \frac{p-\sqrt{p^2 + 4}}{2};
\end{equation}
so that
\begin{equation}\label{eq.dv3pphj}
\tau+\sigma=p,\quad\tau-\sigma=\sqrt{p^2 + 4}=\Delta,\quad\mbox{and }\tau\sigma=-1.
\end{equation}
In particular,
\begin{equation}\label{eq.djr1ak1}
U_j=\frac{\tau^j-\sigma^j}{\tau-\sigma},\quad V_j=\tau^j+\sigma^j.
\end{equation}
Using the Binet formulas, it is readily established that
\begin{equation}\label{eq.t2c6ndd}
U_{-j}=(-1)^{j - 1}U_j,\quad V_{-j}=(-1)^jV_j.
\end{equation}
It is also straightforward to establish the following:
\begin{gather}
U_{j + 1}  + U_{j - 1}  = V_j,\label{eq.cajhusn} \\
U_{j + 1}  - U_{j - 1}  = pU_j, \\
V_{j + 1}  + V_{j - 1}  = U_j \Delta ^2\label{eq.c70545l},
\end{gather}
and
\begin{equation}
V_{j + 1}  - V_{j - 1}  = pV_j.
\end{equation}
As for the gibonacci sequence, we have
\begin{equation}\label{eq.i07nxs0}
G_j  = \frac{{G_1  - G_0 \beta }}{{\sqrt 5 }}\alpha ^j  + \frac{{G_0 \alpha  - G_1 }}{{\sqrt 5 }}\beta ^j .
\end{equation}
\begin{lemma}\label{lem.r4azw23}
For any integer $j$,
\begin{equation}\label{eq.dnu2vbt}
A\tau ^j  -  B\sigma ^j  = \frac{{W_{j + 1}  + W_{j - 1} }}{\Delta }\,.
\end{equation}
\end{lemma}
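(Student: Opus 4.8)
\emph{Proof proposal.} The plan is to substitute the Binet formula \eqref{Binet} directly into the right-hand side of \eqref{eq.dnu2vbt} and simplify using only the scalar relations recorded in \eqref{eq.dv3pphj}; no appeal to the recurrence \eqref{eq.k54ys0s} itself is needed. First I would expand
\[
W_{j+1}+W_{j-1}=A\tau^{j+1}+B\sigma^{j+1}+A\tau^{j-1}+B\sigma^{j-1}=A\tau^{j-1}\left(\tau^2+1\right)+B\sigma^{j-1}\left(\sigma^2+1\right),
\]
so that the entire claim is reduced to evaluating the two constants $\tau^2+1$ and $\sigma^2+1$.

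The key step is the observation that, since $\tau\sigma=-1$, one may rewrite the constant $1$ as $-\tau\sigma$; therefore
\[
\tau^2+1=\tau^2-\tau\sigma=\tau(\tau-\sigma)=\tau\Delta,\qquad \sigma^2+1=\sigma^2-\tau\sigma=\sigma(\sigma-\tau)=-\sigma\Delta,
\]
where I have used $\tau-\sigma=\Delta$ from \eqref{eq.dv3pphj}. Feeding these back in gives
\[
W_{j+1}+W_{j-1}=A\tau^{j-1}\cdot\tau\Delta-B\sigma^{j-1}\cdot\sigma\Delta=\Delta\left(A\tau^j-B\sigma^j\right),
\]
and dividing by $\Delta=\sqrt{p^2+4}\neq 0$ yields \eqref{eq.dnu2vbt}. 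Since \eqref{Binet} is valid for every integer $j$ (positive, zero, or negative), the argument covers all $j$ with no case distinction.

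There is essentially no serious obstacle here; the computation is a few lines. The only point requiring care is the bookkeeping of signs: the minus in $\sigma^2+1=-\sigma\Delta$ is precisely what turns the term $+B\sigma^j$ appearing in $W_j=A\tau^j+B\sigma^j$ into the term $-B\sigma^j$ appearing in the statement, and getting this wrong would produce $V_j/\Delta=U_j$ instead. As an alternative route one could instead use that $\tau$ and $\sigma$ both satisfy $x^2=px+1$, write $\tau^2+1=p\tau+2$ and $\sigma^2+1=p\sigma+2$, and simplify $(p\tau+2)A\tau^{j-1}+(p\sigma+2)B\sigma^{j-1}$ using the explicit forms of $A$ and $B$ in \eqref{eq.n2r2300}; but the factorization via $\tau\sigma=-1$ is the cleanest.
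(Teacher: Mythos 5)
Your proof is correct, and it is essentially the paper's own argument run in the opposite direction: the paper sets $Q=A\tau^j-B\sigma^j$, forms $\tau Q$ and $\sigma Q$, and subtracts to get $Q(\tau-\sigma)=W_{j+1}+W_{j-1}$, while you expand $W_{j+1}+W_{j-1}$ and factor using $\tau^2+1=\tau\Delta$ and $\sigma^2+1=-\sigma\Delta$. Both rest on exactly the same two facts, $\tau\sigma=-1$ and $\tau-\sigma=\Delta$, so no substantive difference.
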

\begin{proof}
Let $Q= A\tau ^j  -  B\sigma ^j $. Then,
\begin{gather*}
\tau Q =  A\tau ^{j + 1}  +  B\sigma ^{j - 1},\\
\sigma Q =  -A\tau ^{j - 1}  -  B\sigma ^{j + 1}.
\end{gather*}
Thus,
\begin{equation*}
Q \times(\tau  - \sigma )= \left(A\tau ^{j + 1}  + B\sigma ^{j + 1}\right)  + \left(A\tau ^{j - 1}  + B\sigma ^{j - 1}\right);
\end{equation*}
that is
\begin{equation*}
Q\Delta = W_{j + 1}  + W_{j - 1}.
\end{equation*}
\end{proof}
Identity~\eqref{eq.dnu2vbt} is at the heart of the justification of the calculus-based method of obtaining Fibonacci identities.
\subsection{Justification of the method}\label{sec.ts07vcf}
Consider a generalized Fibonacci function $w(x)$ defined by
\begin{equation}\label{eq.l53gn22}
w(x)=A\tau^x + B\sigma^x,\quad x\in\mathbb R,
\end{equation}
where $A$ and $B$ are as defined in~\eqref{eq.n2r2300} and $\tau$ and $\sigma$ are as given in~\eqref{eq.qu7nzq5}.

\bigskip

Corresponding to~\eqref{eq.ei9m9n1}, \eqref{eq.djr1ak1} and~\eqref{eq.i07nxs0}, we have the following special cases of~\eqref{eq.l53gn22}:
\begin{equation*}\tag{\ref{eq.r280nsg}}
f(x)  = \frac{{\alpha ^x  - \beta ^x }}{\sqrt 5}\,,\quad l(x)  = \alpha ^x  + \beta ^x\,,
\end{equation*}
\begin{equation}\label{eq.qyjgp2v}
u(x)=\frac{\tau^x-\sigma^x}{\tau-\sigma},\quad v(x)=\tau^x+\sigma^x,
\end{equation}
and
\begin{equation}\label{eq.r25jazv}
g(x)  = A_G\alpha ^x  + B_G\beta ^x ,
\end{equation}
where
\begin{equation}\label{eq.zumcdp3}
A_G=\frac{{G_1  - G_0 \beta }}{{\sqrt 5 }},\quad B_G=\frac{{G_0 \alpha  - G_1 }}{{\sqrt 5 }}.
\end{equation}
Clearly,
\begin{equation}\label{eq.cfgdre}
w(j) = W_j,\quad j\in\mathbb Z;
\end{equation}
that is
\begin{equation*}
u(j)=U_j,\quad v(j)=V_j,\quad g(j)=G_j,\quad f(j)=F_j,\quad l(j)=L_j,\quad j\in\mathbb Z.
\end{equation*}
\begin{thm}
The following identity holds:
\begin{equation}\label{eq.yeahprn}
\Re\left( \left. {\frac{d}{{dx}}w(x)} \right|_{x = j \in \mathbb Z} \right) = \frac{W_{j + 1} + W_{j - 1}}\Delta\ln\tau,
\end{equation}
where, as usual, $\Re (X)$ denotes the real part of $X$.
\end{thm}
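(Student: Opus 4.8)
The plan is to work directly from the Binet-type representation \eqref{eq.l53gn22}, namely $w(x)=A\tau^x+B\sigma^x$ with $A,B$ given by \eqref{eq.n2r2300}. The first point to pin down is that $A$, $B$, $\tau$ and $\sigma$ are all real, since $W_0$, $W_1$ and $p$ are real; here $\tau>0$ but $\sigma<0$ by \eqref{eq.qu7nzq5}. Hence, while $\tau^x=e^{x\ln\tau}$ is unambiguous and real for real $x$, the quantity $\sigma^x$ must be defined via a branch of the complex logarithm; I would adopt the principal branch, writing $\sigma^x:=e^{x\operatorname{Log}\sigma}$ with $\operatorname{Log}\sigma=\ln|\sigma|+i\pi=\ln(-\sigma)+i\pi$. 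This is exactly the source of the complex-valued nature of $w(x)$ that the paper has been advertising.

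Next I would differentiate termwise: $\dfrac{d}{dx}w(x)=A\tau^x\ln\tau+B\sigma^x\operatorname{Log}\sigma$. Evaluating at $x=j\in\mathbb Z$, I would observe that $\sigma^j=|\sigma|^je^{i\pi j}=(-1)^j|\sigma|^j$ coincides with the ordinary integer power of $\sigma$, so $\sigma^j\in\mathbb R$ and the representation is consistent with \eqref{eq.cfgdre}. Taking real parts and using that $A$, $B$, $\tau^j$, $\sigma^j$ and $\ln\tau$ are all real, while $\Re(\operatorname{Log}\sigma)=\ln|\sigma|$, gives
\begin{equation*}
\Re\left(\left.\frac{d}{dx}w(x)\right|_{x=j}\right)=A\tau^j\ln\tau+B\sigma^j\ln|\sigma|,
\end{equation*}
so the $i\pi$ contribution drops out cleanly upon taking the real part.

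To finish, I would invoke the relation $\tau\sigma=-1$ from \eqref{eq.dv3pphj}, which gives $|\sigma|=1/\tau$ and hence $\ln|\sigma|=-\ln\tau$. Substituting, the right-hand side collapses to $(A\tau^j-B\sigma^j)\ln\tau$, and then Lemma~\ref{lem.r4azw23} identifies $A\tau^j-B\sigma^j=(W_{j+1}+W_{j-1})/\Delta$, yielding \eqref{eq.yeahprn}. The only genuinely delicate step is the branch bookkeeping for $\sigma^x$: one must choose the branch so that it reduces to the ordinary power at integers and so that the derivative picks up the additive $i\pi$ from $\operatorname{Log}\sigma$; everything afterward is a short algebraic reduction relying on $\tau\sigma=-1$ and the already-established Lemma~\ref{lem.r4azw23}.
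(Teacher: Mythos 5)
Your proposal is correct and follows essentially the same route as the paper: differentiate the Binet form $w(x)=A\tau^x+B\sigma^x$, use $\tau\sigma=-1$ (equivalently $\ln\sigma=-\ln\tau+i\pi$) to isolate the purely imaginary contribution, and invoke Lemma~\ref{lem.r4azw23} to identify $A\tau^j-B\sigma^j$. Your explicit bookkeeping of the principal branch of $\sigma^x$ is a slightly more careful rendering of what the paper does implicitly when it writes $B\sigma^x\ln(-1)$, but the argument is the same.
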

\begin{proof}
We have
\begin{equation*}
\begin{split}
\frac{d}{{dx}}w(x) &= A\tau ^x \ln \tau  + B\sigma ^x \ln \sigma \\
 &= A\tau ^x \ln \tau  - B\sigma ^x \ln \tau  + B\sigma ^x \ln \tau  + B\sigma ^x \ln \sigma\\ 
 &= \left( {A\tau ^x  - B\sigma ^x } \right)\ln \tau  + B\sigma ^x \ln (\tau \sigma );
\end{split}
\end{equation*}
that is,
\begin{equation}\label{eq.sl6udgy}
\frac{d}{{dx}}w(x) = \left( {A\tau ^x  - B\sigma ^x } \right)\ln \tau  + B\sigma ^x \ln ( - 1).
\end{equation}
Evaluating~\eqref{eq.sl6udgy} at $x=j\in\mathbb Z$, we have
\begin{equation}\label{eq.ydfrjqv}
\begin{split}
\left. {\frac{d}{{dx}}w(x)} \right|_{x = j\in\mathbb Z}  &= \left( {A\tau ^j  - B\sigma ^j } \right)\ln \tau  + B\sigma ^j \ln ( - 1)\\
& = \frac{{W_{j + 1}  + W_{j - 1} }}{\Delta }\ln \tau  + B\sigma ^j \ln ( - 1),\text{by~\eqref{eq.dnu2vbt}},
\end{split}
\end{equation}
from which, on taking real part,~\eqref{eq.yeahprn} follows, since $\sigma$, $\Delta$, $B$, $W_{j + 1}$ and $W_{j - 1}$ are real quantities and $\tau$ is a positive number.
\end{proof}
Of course the derivatives given in~\eqref{eq.wyg8jr6} are particular cases of~\eqref{eq.yeahprn} with $\Delta=\sqrt 5$, $F_{j + 1} + F_{j - 1}=L_j$ and $L_{j + 1} + L_{j - 1}=5F_j$. Similarly,~\eqref{eq.q9r3ffo},~\eqref{eq.tnsd6nm},~\eqref{eq.gwl4lx2} and~\eqref{eq.jp0f8zj} are all consequences of~\eqref{eq.yeahprn}.

\bigskip

Thus, given a (generalized) Fibonacci identity having a free index, on account of~\eqref{eq.l53gn22}, \eqref{eq.cfgdre} and~\eqref{eq.yeahprn}, we can replace (generalized) Fibonacci numbers with (generalized) Fibonacci functions, perform differentiation and evaluate at integer values to obtain a new (generalized) Fibonacci identity.
\section{The method, second component}\label{sec.additional}
The imaginary part of~\eqref{eq.ydfrjqv} establishes a connection between powers of $\tau$ and $\sigma$ and the (generalized) Fibonacci numbers; through which new (generalized) Fibonacci identities can be obtained. We have
\begin{equation}\label{eq.hg8x0pw}
\Im\left( \left. {\frac{d}{{dx}}w(x)} \right|_{x = j \in \mathbb Z} \right) = B_W\sigma^j\pi(2m + 1),
\end{equation}
where $m$ is some integer and 
\begin{equation}
B_W = \frac{{W_0\tau  - W_1}}{{\tau  - \sigma }}.
\end{equation}
Specializing to the special Lucas sequences, we have
\begin{equation*}
B_U=-\frac1\Delta,\quad B_V=\frac{2\tau - p}\Delta=\frac{\tau - \sigma}\Delta=1;
\end{equation*}
so that
\begin{equation}
\Im\left( {\left. {\frac{d}{{dx}}u(x)} \right|_{x = j\in\mathbb Z} } \right) =  - \frac{{\pi \sigma ^j }}{\Delta }
\end{equation}
and
\begin{equation}
\Im\left( {\left. {\frac{d}{{dx}}v(x)} \right|_{x = j\in\mathbb Z} } \right) = \pi \sigma ^j.
\end{equation}
For the gibonacci sequence, we have
\begin{equation*}
B_G=\frac{G_0\alpha - G_1}{\sqrt 5};
\end{equation*}
so that
\begin{equation}
\Im\left( {\left. {\frac{d}{{dx}}g(x)} \right|_{x = j\in\mathbb Z} } \right) = \frac{{G_0 \alpha  - G_1 }}{{\sqrt 5 }}\pi \beta ^j.
\end{equation}
For the Fibonacci and Lucas numbers, we have
\begin{equation}
B_F  = \frac{{F_0 \alpha  - F_1 }}{{\alpha  - \beta }} =  - \frac{1}{{\sqrt 5 }},\quad B_L  = \frac{{L_0 \alpha  - L_1 }}{{\alpha  - \beta }} = \frac{{2\alpha  - 1}}{{\sqrt 5 }} = 1;
\end{equation}
so that in view of~\eqref{eq.hg8x0pw}, with the principal value $m=0$, the method described in Section~\ref{sec.method} can now be applied to a Fibonacci-Lucas identity with the prescription in step~\ref{step3} replaced with the following:
\begin{quote}
Simplify the equation obtained in step~\ref{step2} and take the imaginary part of the whole expression/equation, using also the following prescription:
\begin{align}
&\Im\left( f(h(k,\cdots))Y\right)=F_{h(k,\cdots)}\Im Y,\label{eq.mbcae9n}\\
&\Im \left(l(h(k,\cdots))Y\right)=L_{h(k,\cdots)}\Im Y,\label{eq.p4bbs37}\\
&\Im\frac{\partial }{{\partial k}}f(h(k, \cdots )) = -\frac{\beta^{h(k,\cdots)}\pi}{{\sqrt 5 }},\label{eq.bf3c8fj}\\
&\Im\frac{\partial }{{\partial k}}l(h(k, \cdots )) = \beta^{h(k,\cdots)}\pi\label{eq.b00ifyn};
\end{align}
where $Y$ is arbitrary.
\end{quote}
\begin{remark}
Formally, the method (second component) of establishing a connection between the powers of $\beta$ and Fibonacci and Lucas numbers in a given Fibonacci-Lucas identity proceeds in two quick steps:
\begin{enumerate}
\item[i]\label{rule1b} Treat the subscripts of Fibonacci and Lucas numbers as variables and differentiate through the given identity, with respect to the free index of interest, using the rules of differential calculus.
\item[ii]\label{rule2b} Make the following replacements:
\begin{gather}
\frac{\partial }{{\partial k}}F_{h(k, \cdots )}  \to \frac{{-\pi\beta^{h(k, \cdots )} }}{{\sqrt 5 }}\frac{\partial }{{\partial k}}h(k, \cdots ),\label{eq.zbscq1z}\\
\frac{\partial }{{\partial k}}L_{h(k, \cdots )}  \to \pi\beta^{h(k, \cdots )} \frac{\partial }{{\partial k}}h(k, \cdots )\label{eq.clf29qo},\\
i\to 1,\\
\ln\alpha\to 0;
\end{gather}
where $i=\sqrt{-1}$ is the imaginary unit.
\end{enumerate}
For example, given the double-angle identity:
\begin{equation*}
F_{2k}=L_kF_k,
\end{equation*}
we have, by step~i,
\begin{equation*}
\frac{d}{{dk}}F_{2k}  = \frac{d}{{dk}}\left(L_kF_k\right)=L_k \frac{d}{{dk}}F_k  + F_k \frac{d}{{dk}}L_k;
\end{equation*}
so that, by step~ii, using~\eqref{eq.zbscq1z} and~\eqref{eq.clf29qo},  we get
\begin{equation*}
\frac{{-\pi\beta^{2k} }}{{\sqrt 5 }} \times \frac{d}{{dk}}(2k) = L_k  \times \frac{{-\pi\beta^k }}{{\sqrt 5 }} + F_k  \times \pi\beta^k ; 
\end{equation*}
and hence,
\begin{equation*}
2\beta^k  = L_k  - F_k\sqrt 5.
\end{equation*}

\end{remark}
For the special Lucas sequences,~\eqref{eq.mbcae9n}--~\eqref{eq.b00ifyn} read
\begin{quote}
\begin{align}
&\Im\left( u(h(k,\cdots))Y\right)=U_{h(k,\cdots)}\Im Y,\label{eq.m6fbisb}\\
&\Im \left(v(h(k,\cdots))Y\right)=V_{h(k,\cdots)}\Im Y,\\
&\Im\frac{\partial }{{\partial k}}u(h(k, \cdots )) = -\frac{\sigma^{h(k,\cdots)}\pi}{{\Delta }},\label{eq.y6bmkxe}\\
&\Im\frac{\partial }{{\partial k}}v(h(k, \cdots )) = \sigma^{h(k,\cdots)}\pi\label{eq.c5vr4yz};
\end{align}
\end{quote}
while for the gibonacci sequence, we have
\begin{quote}
\begin{align}
&\Im\left( g(h(k,\cdots))Y\right)=G_{h(k,\cdots)}\Im Y,\label{eq.brp6mp3}\\
&\Im\frac{\partial }{{\partial k}}g(h(k, \cdots )) = \frac{(G_0\alpha - G_1)\pi\beta^{h(k,\cdots)}}{{\sqrt 5 }}=B_G\pi\beta^{h(k,\cdots)}\label{eq.k32lr6u}.
\end{align}
\end{quote}
\subsection{Examples}
We now give a couple of examples to illustrate the use of~\eqref{eq.mbcae9n}--\eqref{eq.b00ifyn} and~\eqref{eq.brp6mp3} and~\eqref{eq.k32lr6u} in obtaining new identities from known Fibonacci-Lucas identities.

\bigskip

Note that in the definitions in \eqref{eq.ei9m9n1}, $F_j$ and $L_j$ do not change when $\alpha$ and $\beta$ are interchanged. Thus, $\alpha$ and $\beta$ can be interchanged in any Fibonacci-Lucas identity involving Fibonacci numbers, Lucas numbers, $\alpha$ and $\beta$ and no other irrational numbers. More generally, we have the observation stated in Proposition~\ref{prop.invariant}.
\begin{proposition}\label{prop.invariant}
Any (generalized) Fibonacci identity involving (generalized) Fibonacci numbers as well as $\tau$ and $\sigma$ and no other irrational numbers remains valid under the exchange of $\tau$ and $\sigma$.
\end{proposition}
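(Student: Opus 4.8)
The plan is to exhibit a field automorphism that realises the exchange $\tau\leftrightarrow\sigma$ while fixing everything else that occurs in the identity. First I would fix the ambient field. Let $K$ be the subfield of $\R$ generated over $\Q$ by $p$, by the seeds $W_0,W_1$ (and the seeds of any further generalized Fibonacci sequences that appear), and by the rationals — in other words, the field generated by every quantity in the identity \emph{except} $\tau$ and $\sigma$. By hypothesis the identity contains no irrational numbers other than $\tau$ and $\sigma$, so, keeping $\tau,\sigma$ symbolic (i.e.\ not substituting the Binet formula \eqref{Binet} back in), each side of the identity is an element of $K(\tau,\sigma)=K(\Delta)$, where $\Delta=\sqrt{p^2+4}=\tau-\sigma$ by \eqref{eq.dv3pphj}. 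In the situation of genuine interest $\Delta\notin K$ (equivalently, $\tau$ and $\sigma$ are irrational, as is the case for $\alpha$ and $\beta$), so $K(\Delta)/K$ is a quadratic extension.

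Next I would introduce the nontrivial $K$-automorphism $\phi$ of $K(\Delta)$, characterised by $\phi(\Delta)=-\Delta$ and $\phi|_K=\mathrm{id}$. From \eqref{eq.qu7nzq5} one reads off immediately that $\phi(\tau)=(p-\Delta)/2=\sigma$ and $\phi(\sigma)=\tau$, so $\phi$ is precisely the exchange $\tau\leftrightarrow\sigma$. Moreover $\phi$ fixes every generalized Fibonacci number occurring in the identity: each $W_j$ is a $\Z$-linear combination of $W_0$ and $W_1$ by the recurrence \eqref{eq.k54ys0s}, hence $W_j\in K$ and $\phi(W_j)=W_j$. (Alternatively, without invoking $W_j\in K$: by \eqref{eq.n2r2300} the map $\phi$ interchanges $A$ and $B$, and in \eqref{Binet} it simultaneously interchanges $\tau^j$ and $\sigma^j$, so $W_j=A\tau^j+B\sigma^j$ is left invariant.)

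Finally I would apply $\phi$ to both sides of the given identity. Since $\phi$ is a ring homomorphism it sends the equality of the two sides (viewed as elements of $K(\Delta)$) to the equality of their images; and because $\phi$ fixes $K$ together with all the $W_j$ while swapping $\tau$ and $\sigma$, the image is word-for-word the original identity with $\tau$ and $\sigma$ interchanged. If the identity has one or more free indices, it is to be read as a family of equalities, one for each admissible choice of the indices, and $\phi$ is applied to each member of the family separately. I expect the only genuinely delicate point to be the bookkeeping in the first paragraph — specifying the field $K$ and checking that, under the stated restriction on irrationalities, both sides of the identity really do lie in $K(\Delta)$ with $\Delta\notin K$ — rather than anything in the automorphism step, which is then immediate.
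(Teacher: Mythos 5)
Your proof is correct, and it is essentially a rigorous formalization of what the paper does far more tersely. The paper's entire proof consists of writing out the Binet form \eqref{Binet}--\eqref{eq.n2r2300} and observing that $W_j(\tau,\sigma)=W_j(\sigma,\tau)$ --- i.e., exactly your ``alternative'' remark that the exchange interchanges $A$ with $B$ and $\tau^j$ with $\sigma^j$ simultaneously --- and then declares ``hence the proposition.'' What the paper leaves implicit is precisely the mechanism you supply: why invariance of the individual $W_j$ under the exchange forces the \emph{whole identity} to survive it. Your answer --- that the exchange is realised by the nontrivial $K$-automorphism $\phi$ of the quadratic extension $K(\Delta)/K$, which, being a ring homomorphism fixing $K$, transports the equality to its conjugate --- is the right one, and it also correctly handles explicit occurrences of $\tau$, $\sigma$ and $\Delta=\tau-\sigma$ in the identity: note that when the paper invokes the proposition, e.g.\ in passing from \eqref{eq.tbxfpjf} to \eqref{eq.dy9a2o8}, the factors of $\sqrt 5=\Delta$ do change sign, exactly as $\phi(\Delta)=-\Delta$ predicts. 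Your caveat that the argument requires $\Delta\notin K$ (so that $\phi$ exists and is nontrivial) is likewise a genuine point the paper glosses over. In short: same key observation, but your automorphism step buys the actual logical content --- the paper's proof as written only establishes the symmetry of $W_j$ and asserts the rest.
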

\begin{proof}
From~\eqref{Binet} and~\eqref{eq.n2r2300}, we have
\begin{equation*}
W_j (\tau ,\sigma ) = \frac{{W_1  - W_0 \sigma }}{{\tau  - \sigma }}\tau ^j  + \frac{{W_0 \tau  - W_1 }}{{\tau  - \sigma }}\sigma ^j.
\end{equation*}
It is straightforward to verify that $W_j (\tau ,\sigma )=W_j (\sigma ,\tau )$; and hence, the proposition.
\end{proof}
The generalizations obtained in this section rest on Proposition~\ref{prop.invariant}.
\subsubsection{Generalizations of the fundamental identity of Fibonacci and Lucas numbers}
Differentiating the Fibonacci-Lucas function form:
\begin{equation*}
5f(k)^2  - l(k)^2  = ( - 1)^{k - 1} 4,
\end{equation*}
of the fundamental identity:
\begin{equation*}
5F_k^2  - L_k^2  = ( - 1)^{k - 1} 4,
\end{equation*}
and applying the prescription of~\eqref{eq.mbcae9n}--\eqref{eq.b00ifyn} yields
\begin{equation}\label{eq.tbxfpjf}
5F_k \beta ^{k + r}  + L_k \beta ^{k + r} \sqrt 5  = ( - 1)^k 2\beta ^r \sqrt 5,
\end{equation}
and also, by Proposition~\ref{prop.invariant},
\begin{equation}\label{eq.dy9a2o8}
5F_k \alpha ^{k + r}  - L_k \alpha ^{k + r} \sqrt 5  = ( - 1)^{k - 1} 2\alpha ^r \sqrt 5.
\end{equation}
Combining~\eqref{eq.tbxfpjf} and~\eqref{eq.dy9a2o8} according to the Binet form~\eqref{eq.i07nxs0} leads to the result stated in Proposition~\ref{prop.evb2hve}.
\begin{proposition}\label{prop.evb2hve}
If $k$ and $r$ are any integers, then
\begin{equation*}
5F_k G_{k + r}  - L_k \left( {G_{k + r + 1}  + G_{k + r - 1} } \right) = ( - 1)^{k - 1} 2\left( {G_{r + 1}  + G_{r - 1} } \right).
\end{equation*}
\end{proposition}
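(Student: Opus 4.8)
The plan is to follow the second-component recipe verbatim on the fundamental identity $5F_k^2 - L_k^2 = (-1)^{k-1}4$, and then use Proposition~\ref{prop.invariant} to obtain a companion relation, combining the two via the Binet form~\eqref{eq.i07nxs0} for $G_{k+r}$. First I would write the Fibonacci-Lucas function form $5f(k)^2 - l(k)^2 = (-1)^{k-1}4$, treat $k$ as the free index, and differentiate through with respect to $k$. On the left this produces $10 f(k) f'(k) - 2 l(k) l'(k)$; on the right, differentiating $(-1)^{k-1} = e^{i\pi(k-1)}$ contributes a factor $i\pi$, giving $4 i\pi (-1)^{k-1}$. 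Taking the imaginary part and applying the prescription~\eqref{eq.mbcae9n}--\eqref{eq.b00ifyn} — namely $\Im(f(k)Y) = F_k \Im Y$, $\Im(l(k)Y) = L_k\Im Y$, $\Im f'(k) = -\pi\beta^{k}/\sqrt5$, $\Im l'(k) = \pi\beta^{k}$, and on the right $i \to 1$ — converts the differentiated identity into $10 F_k(-\pi\beta^{k}/\sqrt5) - 2 L_k \pi\beta^{k} = -4\pi(-1)^{k-1}$. A small rearrangement (dividing out $-2\pi$ and introducing an extra free parameter $r$ by multiplying through by $\beta^{r}$, which is legitimate since $r$ does not appear in the original identity) yields exactly~\eqref{eq.tbxfpjf}: $5F_k\beta^{k+r} + L_k\beta^{k+r}\sqrt5 = (-1)^k 2\beta^r\sqrt5$.

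Next I would invoke Proposition~\ref{prop.invariant}: since~\eqref{eq.tbxfpjf} involves only Fibonacci numbers, Lucas numbers, and $\beta$ (no other irrationals — note $\sqrt5 = \alpha-\beta$ is expressible through them), interchanging $\alpha\leftrightarrow\beta$, equivalently replacing $\beta$ by $\alpha$ and flipping the sign of $\sqrt5$, gives the companion identity~\eqref{eq.dy9a2o8}: $5F_k\alpha^{k+r} - L_k\alpha^{k+r}\sqrt5 = (-1)^{k-1}2\alpha^r\sqrt5$. The final step is to combine these two. Recall from~\eqref{eq.i07nxs0} that $G_m = A_G\alpha^m + B_G\beta^m$ with $A_G = (G_1 - G_0\beta)/\sqrt5$ and $B_G = (G_0\alpha - G_1)/\sqrt5$, and from Lemma~\ref{lem.r4azw23} (specialized to the gibonacci case) that $A_G\alpha^m - B_G\beta^m = (G_{m+1}+G_{m-1})/\sqrt5$. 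I would multiply~\eqref{eq.dy9a2o8} by $A_G$ and~\eqref{eq.tbxfpjf} by $B_G$ and add: the term $5F_k\alpha^{k+r}A_G + 5F_k\beta^{k+r}B_G = 5F_k G_{k+r}$; the term $-L_k\sqrt5(\alpha^{k+r}A_G - \beta^{k+r}B_G) = -L_k\sqrt5 \cdot (G_{k+r+1}+G_{k+r-1})/\sqrt5 = -L_k(G_{k+r+1}+G_{k+r-1})$; and the right-hand side $(-1)^{k-1}2\sqrt5(\alpha^r A_G + \beta^r B_G) = (-1)^{k-1}2\sqrt5 \cdot G_r$ — wait, here I must be careful about the sign pairing, so I would instead match the signs so that the right side assembles into $(G_{r+1}+G_{r-1})$ rather than $G_r$, which means pairing $B_G$ with~\eqref{eq.tbxfpjf} and $A_G$ with the $\alpha$-version while tracking the $(-1)^{k-1}$ versus $(-1)^k$ discrepancy between the two source identities.

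The main obstacle I anticipate is precisely this bookkeeping of signs in the combination step. The identity~\eqref{eq.tbxfpjf} carries $(-1)^k$ while~\eqref{eq.dy9a2o8} carries $(-1)^{k-1} = -(-1)^k$, so a naive addition would cause the right-hand sides to partially cancel rather than combine; one must rescale one of them (or equivalently pair the coefficients $A_G, B_G$ with the opposite equations) so that both left-hand sides still add up to the desired gibonacci combinations while the right-hand sides assemble into $\pm 2(-1)^{k-1}(G_{r+1}+G_{r-1})$. Concretely, multiplying~\eqref{eq.dy9a2o8} by $A_G$ and~\eqref{eq.tbxfpjf} by $-B_G$ (note the minus) and adding should produce $5F_k(A_G\alpha^{k+r} - B_G\beta^{k+r}) - L_k\sqrt5(A_G\alpha^{k+r} + B_G\beta^{k+r})$ on the left — but that mixes the two Binet-type sums in the wrong pattern, so the correct move is subtler: I would write out all four scalar products explicitly, group the $\alpha$-terms and $\beta$-terms separately, and recognize which grouping gives $G_{k+r}$ versus $(G_{k+r+1}+G_{k+r-1})$ versus $(G_{r+1}+G_{r-1})$ using~\eqref{eq.i07nxs0} and Lemma~\ref{lem.r4azw23}. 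Once the pairing is fixed so that both $\sqrt5$ factors cancel cleanly and the alternating signs reinforce, the stated identity $5F_k G_{k+r} - L_k(G_{k+r+1}+G_{k+r-1}) = (-1)^{k-1}2(G_{r+1}+G_{r-1})$ drops out. Everything else — the differentiation, the imaginary-part prescription, and the application of Proposition~\ref{prop.invariant} — is routine given the machinery already established in the paper.
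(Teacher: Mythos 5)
Your proposal is correct and follows the paper's proof exactly: differentiate the function form of $5f(k)^2-l(k)^2=(-1)^{k-1}4$, take imaginary parts via~\eqref{eq.mbcae9n}--\eqref{eq.b00ifyn} to get~\eqref{eq.tbxfpjf}, apply Proposition~\ref{prop.invariant} to get~\eqref{eq.dy9a2o8}, and combine according to the Binet form. The only thing to settle is the combination step you flagged as an obstacle: your \emph{first} pairing was already the right one, namely $A_G\times\eqref{eq.dy9a2o8}+B_G\times\eqref{eq.tbxfpjf}$ (no extra minus sign), and the slip was only in assembling the right-hand side --- since the two equations carry $(-1)^{k-1}$ and $(-1)^{k}$ respectively, the sum of the right sides is $(-1)^{k-1}2\sqrt5\left(A_G\alpha^r - B_G\beta^r\right)=(-1)^{k-1}2\left(G_{r+1}+G_{r-1}\right)$ by Lemma~\ref{lem.r4azw23}, not $(-1)^{k-1}2\sqrt5\,G_r$; the same ``minus'' combination $A_G\alpha^{k+r}-B_G\beta^{k+r}$ appears in the $L_k$ term on the left, so both $\sqrt5$ factors cancel and the stated identity follows with no further rescaling.
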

Writing the function form of the identity of Proposition~\ref {prop.evb2hve} as
\begin{equation*}
\begin{split}
&5f(k - r)g(k) - l(k - r)\left( {g(k + 1) + g(k - 1)} \right)\\
&\qquad = ( - 1)^{k - r - 1} 2\left( {g(r + 1) + g(r - 1)} \right),
\end{split}
\end{equation*}
and differentiating with respect to $r$, using again the prescription~\eqref{eq.mbcae9n}--\eqref{eq.b00ifyn} and~\eqref{eq.brp6mp3} and~\eqref{eq.k32lr6u}, we find
\begin{equation}\label{eq.vuld42i}
\begin{split}
&5\beta ^{k - r} G_k  + \beta ^{k - r} \sqrt 5 \left( {G_{k + 1}  + G_{k - 1} } \right)\\
&\qquad = ( - 1)^{k - r} 2\sqrt 5 \left( {G_{r + 1}  + G_{r - 1} } \right) + ( - 1)^{k - r} 2\sqrt 5 \left( {G_0 \beta ^{r - 1}  - G_1 \beta ^r } \right),
\end{split}
\end{equation}
and, on account of Proposition~\ref{prop.invariant}, also
\begin{equation}\label{eq.kog9sik}
\begin{split}
&5\alpha ^{k - r} G_k  - \alpha ^{k - r} \sqrt 5 \left( {G_{k + 1}  + G_{k - 1} } \right)\\
&\qquad = ( - 1)^{k - r + 1} 2\sqrt 5 \left( {G_{r + 1}  + G_{r - 1} } \right) + ( - 1)^{k - r + 1} 2\sqrt 5 \left( {G_0 \alpha ^{r - 1}  - G_1 \alpha ^r } \right).
\end{split}
\end{equation}
Combining~\eqref{eq.vuld42i} and~\eqref{eq.kog9sik} gives the next result.
\begin{proposition}
If $k$, $r$ and $s$ are any integers, then
\begin{equation}\label{eq.csoh1r2}
\begin{split}
&5G_kH_{k + s - r}   - \left( {G_{k + 1}  + G_{k - 1} } \right)\left( {H_{k + s - r + 1}  + H_{k + s - r - 1} } \right)\\
&\qquad = ( - 1)^{k - r + 1} 2\left( {G_{r + 1}  + G_{r - 1} } \right)\left( {H_{s + 1}  + H_{s - 1} } \right)\\
&\quad\qquad + ( - 1)^{k - r} 2\left( {G_0 \left( {H_{r + s}  + H_{r + s - 2} } \right) + G_1 \left( {H_{r + s + 1}  + H_{r + s - 1} } \right)} \right).
\end{split}
\end{equation}
\end{proposition}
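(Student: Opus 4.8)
The plan is to derive the identity by algebraically combining the two relations \eqref{eq.vuld42i} and \eqref{eq.kog9sik} with the Binet representation of the auxiliary gibonacci sequence $(H_j)$. First I would record the two facts I need about $(H_j)$: its Binet form $H_j = A_H\alpha^j + B_H\beta^j$, where $A_H = (H_1 - H_0\beta)/\sqrt 5$ and $B_H = (H_0\alpha - H_1)/\sqrt 5$ (this is \eqref{eq.i07nxs0} written for the seeds $H_0,H_1$), and the companion relation $H_{j+1}+H_{j-1} = \sqrt 5\,(A_H\alpha^j - B_H\beta^j)$, which is Lemma \ref{lem.r4azw23} specialised to the gibonacci case $\Delta=\sqrt 5$.

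The crucial structural observation is that the left-hand side of \eqref{eq.kog9sik} factors as $\alpha^{k-r}\bigl(5G_k - \sqrt 5(G_{k+1}+G_{k-1})\bigr)$ and that of \eqref{eq.vuld42i} as $\beta^{k-r}\bigl(5G_k + \sqrt 5(G_{k+1}+G_{k-1})\bigr)$, whereas the target left-hand side $5G_kH_{k+s-r} - (G_{k+1}+G_{k-1})(H_{k+s-r+1}+H_{k+s-r-1})$, after substituting the Binet form and the companion relation at index $k+s-r$, expands into exactly $A_H\alpha^{k+s-r}\bigl(5G_k-\sqrt 5(G_{k+1}+G_{k-1})\bigr) + B_H\beta^{k+s-r}\bigl(5G_k+\sqrt 5(G_{k+1}+G_{k-1})\bigr)$. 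Writing $\alpha^{k+s-r}=\alpha^s\alpha^{k-r}$ and $\beta^{k+s-r}=\beta^s\beta^{k-r}$, this is precisely $A_H\alpha^s$ times the left-hand side of \eqref{eq.kog9sik} plus $B_H\beta^s$ times the left-hand side of \eqref{eq.vuld42i}. So I would multiply \eqref{eq.kog9sik} through by $A_H\alpha^s$, multiply \eqref{eq.vuld42i} through by $B_H\beta^s$, and add; since both \eqref{eq.vuld42i} and \eqref{eq.kog9sik} hold, so does this linear combination, and its left-hand side is by construction the left-hand side of the Proposition.

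It then remains to simplify the corresponding sum of right-hand sides and identify it with the right-hand side of the Proposition. The terms carrying $G_{r+1}+G_{r-1}$ appear with the factors $(-1)^{k-r+1}A_H\alpha^s$ and $(-1)^{k-r}B_H\beta^s$; factoring out $(-1)^{k-r+1}$ and using once more $\sqrt 5(A_H\alpha^s-B_H\beta^s)=H_{s+1}+H_{s-1}$ collapses them to $(-1)^{k-r+1}2(G_{r+1}+G_{r-1})(H_{s+1}+H_{s-1})$. The remaining terms carry $G_0$ and $G_1$ with powers $\alpha^{r+s-1},\alpha^{r+s}$ and $\beta^{r+s-1},\beta^{r+s}$; regrouping them by $G_0$ and by $G_1$ and invoking the companion relation at indices $r+s-1$ and $r+s$ collapses them to $(-1)^{k-r}2\bigl(G_0(H_{r+s}+H_{r+s-2})+G_1(H_{r+s+1}+H_{r+s-1})\bigr)$, which completes the proof.

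I expect the only real difficulty to be sign bookkeeping: \eqref{eq.vuld42i} and \eqref{eq.kog9sik} carry the opposite alternating signs $(-1)^{k-r}$ and $(-1)^{k-r+1}$ (which ultimately stem from the $\sqrt 5\mapsto-\sqrt 5$ accompanying the $\alpha\leftrightarrow\beta$ exchange of Proposition \ref{prop.invariant}), and these must be reconciled with the built-in antisymmetry between $\alpha$ and $\beta$ in the rule $H_{j+1}+H_{j-1}=\sqrt 5(A_H\alpha^j-B_H\beta^j)$, so that each $H$-combination on the right emerges with the correct overall sign; everything else is routine algebra.
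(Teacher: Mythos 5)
Your proposal is correct and is essentially the paper's own argument: the paper simply says ``combining~\eqref{eq.vuld42i} and~\eqref{eq.kog9sik} gives the next result,'' and your weights $A_H\alpha^s$ and $B_H\beta^s$ together with the companion relation $H_{j+1}+H_{j-1}=\sqrt5\,(A_H\alpha^j-B_H\beta^j)$ are exactly the right way to make that combination explicit, reproducing both sides of \eqref{eq.csoh1r2}. One caveat that vindicates your ``sign bookkeeping'' worry: the final terms of \eqref{eq.vuld42i} and \eqref{eq.kog9sik} as printed read $\pm\bigl(G_0\beta^{r-1}-G_1\beta^r\bigr)$ (resp.\ with $\alpha$), whereas the combination you describe --- and the truth of the proposition, as one checks numerically for $(G_j)=(L_j)$, $k=3$, $r=1$ --- requires $\mp\bigl(G_0\beta^{r-1}+G_1\beta^r\bigr)$, i.e.\ the sign of the $G_0$ term in those two displays is a typo in the paper, and with that corrected your regrouping yields the stated right-hand side exactly.
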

Setting $r=0$ in~\eqref{eq.csoh1r2} gives
\begin{equation}
\begin{split}
&5G_k H_{k + s}  - \left( {G_{k + 1}  + G_{k - 1} } \right)\left( {H_{k + s + 1}  + H_{k + s - 1} } \right)\\
&\qquad = ( - 1)^{k - 1} 2\left( {G_1 \left( {H_{s + 1}  + H_{s - 1} } \right) - G_0 \left( {H_{s + 2}  + H_s } \right)} \right),
\end{split}
\end{equation}
which upon using $s=0$ and $(H_j)=(G_j)$ gives
\begin{equation*}
5G_k^2  - \left( {G_{k + 1}  + G_{k - 1} } \right)^2  = ( - 1)^k 4e_G,
\end{equation*}
where, as usual, $e_G  = G_0^2  - G_1^2  + G_0 G_1 $.
\subsubsection{Generalizations of the formula for the sum of squares of two consecutive Fibonacci numbers}
Differentiating the Fibonacci function form:
\begin{equation*}
f(k+1)^2 + f(k)^2=f(2k+1),
\end{equation*}
of the identity
\begin{equation*}
F_{k + 1}^2 + F_k^2=F_{2k + 1},
\end{equation*}
we have
\begin{equation*}
f(k + 1)\frac{d}{{dk}}f(k + 1) + f(k)\frac{d}{{dk}}f(k) = \frac{d}{{dk}}f(2k + 1),
\end{equation*}
and taking imaginary part, by~\eqref{eq.mbcae9n},
\begin{equation}
F_{k + 1} \Im\frac{d}{{dk}}f(k + 1) + F_k \Im\frac{d}{{dk}}f(k) = \Im\frac{d}{{dk}}f(2k + 1),
\end{equation}
and by~\eqref{eq.bf3c8fj},
\begin{equation*}
\beta^{k + 1}F_{k + 1} + \beta^kF_k=\beta^{2k + 1}.
\end{equation*}
Thus
\begin{equation*}
\beta ^{r + k + 1} F_{k + 1}  + \beta ^{r + k} F_k  = \beta ^{2k + r + 1},
\end{equation*}
or
\begin{equation}\label{eq.mk4knz9}
\beta ^{s + 1} F_{k + 1}  + \beta ^s F_k  = \beta ^{k + s + 1},
\end{equation}
where $s$ is an arbitrary integer, and also
\begin{equation}\label{eq.nfgv56u}
\alpha ^{s + 1} F_{k + 1}  + \alpha ^s F_k  = \alpha ^{k + s + 1}.
\end{equation}
Combining~\eqref{eq.mk4knz9} and~\eqref{eq.nfgv56u} according to the Binet formula, we have the next result, equivalent to Vajda~\cite[Identity (8)]{vajda}.
\begin{proposition}
If $k$ and $s$ are integers, then
\begin{equation}\label{eq.r6ox2i4}
F_{k + 1} G_{s + 1}  + F_k G_s  = G_{k + s + 1}.
\end{equation}
\end{proposition}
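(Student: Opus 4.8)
The plan is to obtain \eqref{eq.r6ox2i4} as the linear combination of \eqref{eq.mk4knz9} and \eqref{eq.nfgv56u} prescribed by the Binet representation \eqref{eq.i07nxs0} of the gibonacci sequence. Recall that, with $A_G=(G_1-G_0\beta)/\sqrt5$ and $B_G=(G_0\alpha-G_1)/\sqrt5$ as in \eqref{eq.zumcdp3}, one has $G_j=A_G\alpha^j+B_G\beta^j$ for every integer $j$. First I would multiply \eqref{eq.nfgv56u} through by $A_G$ and \eqref{eq.mk4knz9} through by $B_G$, and then add the two resulting identities. Since $A_G$ and $B_G$ are constants that do not involve the index, the left-hand side regroups as $F_{k+1}\bigl(A_G\alpha^{s+1}+B_G\beta^{s+1}\bigr)+F_k\bigl(A_G\alpha^{s}+B_G\beta^{s}\bigr)=F_{k+1}G_{s+1}+F_kG_s$, while the right-hand side collapses to $A_G\alpha^{k+s+1}+B_G\beta^{k+s+1}=G_{k+s+1}$. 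This is precisely the claimed identity.

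An entirely equivalent route, in case one does not want to lean on \eqref{eq.mk4knz9} and \eqref{eq.nfgv56u} as already given, is to start from the Fibonacci-function form $f(k+1)^2+f(k)^2=f(2k+1)$ of the known identity $F_{k+1}^2+F_k^2=F_{2k+1}$, differentiate with respect to $k$, take imaginary parts via \eqref{eq.mbcae9n} and \eqref{eq.bf3c8fj} to get $\beta^{k+1}F_{k+1}+\beta^{k}F_k=\beta^{2k+1}$, reindex to introduce an arbitrary integer $s$, invoke Proposition~\ref{prop.invariant} to produce the companion $\alpha$-identity, and finally perform the same $A_G,B_G$ combination. Either way, the substantive inputs are \eqref{eq.mk4knz9}, \eqref{eq.nfgv56u} and the index-independence of the Binet coefficients.

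There is essentially no serious obstacle here: once the two one-variable identities in $\alpha$ and in $\beta$ are in hand, the only point to check is the elementary fact that any $\R$-linear combination with matching coefficient structure of a valid $\alpha$-identity and the corresponding $\beta$-identity yields a valid gibonacci identity — a principle that is exactly what Proposition~\ref{prop.invariant} and the Binet formula encode. For completeness I would record the sanity checks: taking $(G_j)=(F_j)$ recovers $F_{k+1}F_{s+1}+F_kF_s=F_{k+s+1}$, and $s=0$, $s=1$ give $F_{k+1}G_1+F_kG_0=G_{k+1}$ and $F_{k+1}G_2+F_kG_1=G_{k+2}$, in agreement with Vajda's Identity~(8).
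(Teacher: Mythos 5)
Your proposal is correct and follows the paper's own route: the paper derives \eqref{eq.mk4knz9} and \eqref{eq.nfgv56u} by the second-component method and then ``combines them according to the Binet formula,'' which is exactly the $A_G$, $B_G$ linear combination you spell out. Your write-up merely makes the combination step explicit, so there is nothing substantive to add.
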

Writing $k - 1$ for $k$ and setting $s=0$ in~\eqref{eq.r6ox2i4} gives the well-known result:
\begin{equation}\label{eq.jxu8c83}
F_k G_1  + F_{k - 1} G_0  = G_k.
\end{equation}
Differentiating the Fibonacci function form of~\eqref{eq.r6ox2i4}, that is
\begin{equation*}
f(k + 1)g(s + 1) + f(k)g(s) = g(k + s + 1),
\end{equation*}
with respect to $k$ and using~\eqref{eq.bf3c8fj} and~\eqref{eq.k32lr6u} gives
\begin{equation*}
 - \beta ^{k + 1} G_{s + 1}  - \beta ^k G_s  = \beta ^{k + s + 1} \left( {G_0 \alpha  - G_1 } \right),
\end{equation*}
or
\begin{equation}\label{eq.pik4d8i}
\beta ^{k + 1} G_{s + 1}  + \beta ^k G_s  = G_0 \beta ^{k + s}  + G_1 \beta ^{k + s + 1}
\end{equation}
and also
\begin{equation}\label{eq.ke586ae}
\alpha ^{k + 1} G_{s + 1}  + \alpha ^k G_s  = G_0 \alpha ^{k + s}  + G_1 \alpha ^{k + s + 1}.
\end{equation}
Combining~\eqref{eq.pik4d8i} and~\eqref{eq.ke586ae} produces the next result.
\begin{proposition}
If $k$ and $s$ are integers, then
\begin{equation*}
H_{k + 1} G_{s + 1}  + H_k G_s  = G_0 H_{k + s}  + G_1 H_{k + s + 1}.
\end{equation*}
\end{proposition}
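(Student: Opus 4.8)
The plan is to follow the same two-move pattern this section has been using: first produce a pair of ``powers-of-$\alpha$'' and ``powers-of-$\beta$'' identities by differentiating a known Fibonacci-function identity and taking imaginary parts, then fold that pair, via the Binet formula, into a statement about the second free sequence $(H_j)$. Here the known input is the identity \eqref{eq.r6ox2i4}, $F_{k+1}G_{s+1}+F_kG_s=G_{k+s+1}$, established just above. Passing to its Fibonacci-function form $f(k+1)g(s+1)+f(k)g(s)=g(k+s+1)$, differentiating with respect to the free index $k$, and applying the prescriptions \eqref{eq.bf3c8fj} and \eqref{eq.k32lr6u} yields
\begin{equation*}
\beta^{k+1}G_{s+1}+\beta^k G_s=G_0\beta^{k+s}+G_1\beta^{k+s+1},
\end{equation*}
and, by Proposition~\ref{prop.invariant}, its image under the exchange $\alpha\leftrightarrow\beta$,
\begin{equation*}
\alpha^{k+1}G_{s+1}+\alpha^k G_s=G_0\alpha^{k+s}+G_1\alpha^{k+s+1};
\end{equation*}
these are exactly \eqref{eq.pik4d8i} and \eqref{eq.ke586ae}, which I would simply quote.

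The second move is the linear combination governed by the Binet representation of $(H_j)$, namely $H_j=A_H\alpha^j+B_H\beta^j$ with $A_H=(H_1-H_0\beta)/\sqrt5$ and $B_H=(H_0\alpha-H_1)/\sqrt5$, in analogy with \eqref{eq.i07nxs0}. I would multiply \eqref{eq.ke586ae} by $A_H$, multiply \eqref{eq.pik4d8i} by $B_H$, and add. On the left the coefficients of $G_{s+1}$ and $G_s$ collapse to $A_H\alpha^{k+1}+B_H\beta^{k+1}=H_{k+1}$ and $A_H\alpha^k+B_H\beta^k=H_k$; on the right the coefficients of $G_0$ and $G_1$ collapse to $A_H\alpha^{k+s}+B_H\beta^{k+s}=H_{k+s}$ and $A_H\alpha^{k+s+1}+B_H\beta^{k+s+1}=H_{k+s+1}$. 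Equating the two sides gives $H_{k+1}G_{s+1}+H_kG_s=G_0H_{k+s}+G_1H_{k+s+1}$, which is the assertion.

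I do not anticipate a genuine obstacle: once \eqref{eq.pik4d8i} and \eqref{eq.ke586ae} are in hand the argument is entirely formal, and both of those are already available. The only care needed is clerical---keeping the four exponents $k,\,k+1,\,k+s,\,k+s+1$ matched up when regrouping, and noting that Proposition~\ref{prop.invariant} does apply because \eqref{eq.r6ox2i4} contains no irrationalities beyond $\alpha$ and $\beta$, so its $\alpha\leftrightarrow\beta$ twin is legitimate. As a consistency check one may specialize $(H_j)=(F_j)$, which turns the claim into $F_{k+1}G_{s+1}+F_kG_s=G_0F_{k+s}+G_1F_{k+s+1}$, equivalently \eqref{eq.r6ox2i4} rewritten through \eqref{eq.jxu8c83}; and $(G_j)=(F_j)$ returns \eqref{eq.r6ox2i4} with $G$ replaced by $H$.
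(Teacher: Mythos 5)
Your proposal is correct and follows the paper's own route exactly: differentiate the Fibonacci-function form of \eqref{eq.r6ox2i4} with respect to $k$, apply \eqref{eq.bf3c8fj} and \eqref{eq.k32lr6u} to obtain \eqref{eq.pik4d8i}, invoke Proposition~\ref{prop.invariant} for \eqref{eq.ke586ae}, and combine the pair via the Binet representation of $(H_j)$. The only difference is that you spell out the ``combining according to the Binet formula'' step (multiplying by $A_H$ and $B_H$ and adding), which the paper leaves implicit.
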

In particular,
\begin{equation*}
G_{k + 1}^2  + G_k^2  = G_0 G_{2k}  + G_1 G_{2k + 1}.
\end{equation*}
\subsubsection{Generalizations of the d'Ocagne identity}
Differentiating with respect to $k$, the Fibonacci function form:
\begin{equation*}
f(r + 1)f(k) - f(r)f(k + 1)=(-1)^rf(k-r),
\end{equation*}
of the d'Ocagne identity:
\begin{equation*}
F_{r + 1}F_k - F_rF_{k + 1}=(-1)^rF_{k - r},
\end{equation*}
gives, upon taking the imaginary part while using~\eqref{eq.mbcae9n} and~\eqref{eq.bf3c8fj},
\begin{equation}
\beta ^k F_{r + 1}  - \beta ^{k + 1} F_r  = ( - 1)^r \beta ^{k - r},
\end{equation}
and also
\begin{equation}
\alpha ^k F_{r + 1}  - \alpha ^{k + 1} F_r  = ( - 1)^r \alpha ^{k - r},
\end{equation}
and hence, the result stated in the next proposition.
\begin{proposition}
If $r$ and $k$ are any integers, then
\begin{equation}\label{eq.gyme8yy}
F_{r + 1}G_k - F_rG_{k + 1}=(-1)^rG_{k - r}.
\end{equation}
\end{proposition}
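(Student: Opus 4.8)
The proof will follow the second-component recipe applied in the two preceding subsections, so I will only indicate the steps. First I would pass to the Fibonacci-function form of d'Ocagne's identity,
\begin{equation*}
f(r+1)f(k) - f(r)f(k+1) = (-1)^r f(k-r),
\end{equation*}
treat $k$ as the free index of interest, and differentiate through with respect to $k$ (so that $f(r+1)$ and $f(r)$ are constants, while $\tfrac{d}{dk}f(k+1)$, $\tfrac{d}{dk}f(k-r)$ are evaluated by the chain rule with inner derivative $1$). Taking imaginary parts and applying the prescriptions~\eqref{eq.mbcae9n} (so that $\Im(f(h)Y) = F_h\,\Im Y$) and~\eqref{eq.bf3c8fj} (so that $\Im\tfrac{\partial}{\partial k}f(h(k,\cdots)) = -\pi\beta^{h(k,\cdots)}/\sqrt5$, taking the principal value $m=0$), the left side becomes $-\tfrac{\pi}{\sqrt 5}\bigl(\beta^k F_{r+1} - \beta^{k+1}F_r\bigr)$ and the right side becomes $-\tfrac{\pi}{\sqrt 5}(-1)^r\beta^{k-r}$; cancelling the common factor gives
\begin{equation*}
\beta^k F_{r+1} - \beta^{k+1}F_r = (-1)^r\beta^{k-r}.
\end{equation*}

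Next I would invoke Proposition~\ref{prop.invariant}. The relation just obtained involves only Fibonacci numbers and $\beta$, and no other irrationals, so exchanging $\alpha\leftrightarrow\beta$ produces the companion identity
\begin{equation*}
\alpha^k F_{r+1} - \alpha^{k+1}F_r = (-1)^r\alpha^{k-r}.
\end{equation*}
Finally I would combine the two: multiply the $\alpha$-identity by $A_G = (G_1-G_0\beta)/\sqrt 5$, the $\beta$-identity by $B_G = (G_0\alpha-G_1)/\sqrt 5$, and add. Because $A_G\alpha^j + B_G\beta^j = G_j$ for every integer $j$ by the Binet form~\eqref{eq.i07nxs0}, the sum collapses to $F_{r+1}G_k - F_r G_{k+1} = (-1)^r G_{k-r}$, which is~\eqref{eq.gyme8yy}.

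There is no genuine obstacle here; the only delicate point is the passage to the imaginary part, where one must be sure that the additive $\ln(-1)$-ambiguity visible in~\eqref{eq.ydfrjqv} resolves, after differentiating an equality of entire functions, to one consistent integer $m$ for every term, so that the uniform choice $m=0$ is legitimate. This is exactly what the justification in Section~\ref{sec.additional} secures, and d'Ocagne's identity introduces nothing new on that score. Differentiating instead with respect to $r$ would also work, but one then has to carry the $(-1)^r$ factor and the vanishing $\ln\alpha$-term through the calculation, so differentiating in $k$ is the cleaner route.
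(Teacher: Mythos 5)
Your proposal is correct and follows essentially the same route as the paper: differentiate the Fibonacci-function form of d'Ocagne's identity with respect to $k$, take imaginary parts via~\eqref{eq.mbcae9n} and~\eqref{eq.bf3c8fj} to obtain $\beta^k F_{r+1}-\beta^{k+1}F_r=(-1)^r\beta^{k-r}$, pass to the $\alpha$-companion by Proposition~\ref{prop.invariant}, and recombine via the Binet form to get~\eqref{eq.gyme8yy}. The computations check out, and your choice of $k$ (rather than $r$) as the differentiation variable matches the paper's.
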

Differentiating the Fibonacci function form of~\eqref{eq.gyme8yy}:
\begin{equation*}
f(r + 1)g(k) - f(r)g(k + 1) = ( - 1)^r g(k - r),
\end{equation*}
with respect to $r$, we find
\begin{equation}
\begin{split}
&g(k)\frac{d}{{dr}}f(r + 1) - g(k + 1)\frac{d}{{dr}}f(r)\\
&\qquad = ( - 1)^r \pi g(k - r)i - ( - 1)^r \frac{d}{{dr}}g(k - r);
\end{split}
\end{equation}
and consequent upon use of~\eqref{eq.bf3c8fj},~\eqref{eq.brp6mp3} and~\eqref{eq.k32lr6u},
\begin{equation*}
\begin{split}
&\beta ^{r + s} G_{k + 1}  - \beta ^{r + s + 1} G_k \\
&\qquad = ( - 1)^r \beta ^s G_{k - r} \sqrt 5  + ( - 1)^r \left( {G_0 \beta ^{k + s - r - 1}  + G_1 \beta ^{k + s - r} } \right)
\end{split}
\end{equation*}
and also
\begin{equation*}
\begin{split}
&\alpha ^{r + s} G_{k + 1}  - \alpha ^{r + s + 1} G_k \\
&\qquad = ( - 1)^{r - 1} \alpha ^s G_{k - r} \sqrt 5  + ( - 1)^r \left( {G_0 \alpha ^{k + s - r - 1}  + G_1 \alpha ^{k + s - r} } \right)
\end{split}
\end{equation*}
and hence, the identity stated in the next proposition.
\begin{proposition}
If $r$, $s$ and $k$ are any integers, then
\begin{equation}\label{eq.d9p08dl}
\begin{split}
&H_{r + s} G_{k + 1}  - H_{r + s + 1} G_k \\
&= ( - 1)^{r - 1} \left( {H_{s + 1}  + H_{s - 1} } \right)G_{k - r}\\
&\qquad  + ( - 1)^r \left( {G_0 H_{k + s - r - 1}  + G_1 H_{k + s - r} } \right).
\end{split}
\end{equation}
\end{proposition}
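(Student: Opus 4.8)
The proof will follow the second-component method. Starting from the gibonacci d'Ocagne identity~\eqref{eq.gyme8yy}, I would take its Fibonacci-function form $f(r+1)g(k)-f(r)g(k+1)=(-1)^rg(k-r)$, differentiate once more with respect to $r$ (as already displayed above, the derivative of $(-1)^r$ contributing the term $(-1)^r\pi g(k-r)i$), pass to imaginary parts via~\eqref{eq.mbcae9n}, \eqref{eq.bf3c8fj}, \eqref{eq.brp6mp3} and~\eqref{eq.k32lr6u}, and clear the resulting factors of $\pi$ and $\sqrt5$. Using $\alpha\beta=-1$ together with $\sqrt5\,B_G=G_0\alpha-G_1$ to rewrite the $\beta^{k-r}$ contribution, and then multiplying through by an arbitrary power $\beta^s$ to bring in the third free index, gives the auxiliary identity in powers of $\beta$ recorded immediately before the statement. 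Proposition~\ref{prop.invariant} (the exchange $\alpha\leftrightarrow\beta$) then supplies its companion in powers of $\alpha$; here the only point to watch is that $\sqrt5=\alpha-\beta$ changes sign under the exchange, which is precisely why the $\alpha$-version carries $(-1)^{r-1}$ in front of $\alpha^sG_{k-r}$ while the $\beta$-version carries $(-1)^r$.

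The second stage is to eliminate $\alpha$ and $\beta$ in favour of the sequence $(H_j)$. Writing $A_H=(H_1-H_0\beta)/\sqrt5$ and $B_H=(H_0\alpha-H_1)/\sqrt5$ for its Binet coefficients, so that $A_H\alpha^j+B_H\beta^j=H_j$ for every integer $j$, I would take the combination $A_H\times(\alpha\text{-identity})+B_H\times(\beta\text{-identity})$. On the left this collapses to $H_{r+s}G_{k+1}-H_{r+s+1}G_k$. The terms built from $G_0$ and $G_1$ recombine directly into $(-1)^r(G_0H_{k+s-r-1}+G_1H_{k+s-r})$, again because $A_H\alpha^j+B_H\beta^j=H_j$. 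The $G_{k-r}$-term becomes $(-1)^r\sqrt5\,G_{k-r}(B_H\beta^s-A_H\alpha^s)$, and here I would invoke Lemma~\ref{lem.r4azw23} for the gibonacci sequence $(H_j)$ (with $\Delta=\sqrt5$), which gives $A_H\alpha^s-B_H\beta^s=(H_{s+1}+H_{s-1})/\sqrt5$; thus this term collapses to $(-1)^{r-1}(H_{s+1}+H_{s-1})G_{k-r}$. Collecting the three contributions yields exactly~\eqref{eq.d9p08dl}.

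The only real difficulty I anticipate is sign bookkeeping, all of it concentrated in the coefficient of $G_{k-r}$: the replacement $i\to1$ applied to the $(-1)^r\pi g(k-r)i$ term, the sign flip of $\sqrt5$ under Proposition~\ref{prop.invariant}, and the $\alpha\beta=-1$ rewriting of $(G_0\alpha-G_1)\beta^{k-r}$ as $-(G_0\beta^{k-r-1}+G_1\beta^{k-r})$ all feed into that one coefficient, and one must check they combine to $(-1)^{r-1}$ rather than $(-1)^r$. A convenient consistency check is the specialization $s=0$, $(H_j)=(G_j)$: using $G_1-G_{-1}=G_0$ and $G_{k-r+1}-G_{k-r-1}=G_{k-r}$, the identity must reduce to the gibonacci d'Ocagne form $G_{r+1}G_k-G_rG_{k+1}=(-1)^r(G_1G_{k-r}-G_0G_{k-r+1})$, which catches any stray sign immediately.
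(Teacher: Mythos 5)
Your proposal is correct and follows exactly the paper's route: differentiate the function form of~\eqref{eq.gyme8yy} with respect to $r$, take imaginary parts via~\eqref{eq.mbcae9n}, \eqref{eq.bf3c8fj}, \eqref{eq.brp6mp3}, \eqref{eq.k32lr6u}, introduce $\beta^s$, invoke Proposition~\ref{prop.invariant}, and combine the $\alpha$- and $\beta$-identities according to the Binet form of $(H_j)$. The only difference is that you make explicit (via the Binet coefficients $A_H$, $B_H$ and Lemma~\ref{lem.r4azw23} with $\Delta=\sqrt 5$) the recombination step that the paper leaves implicit, and your sign bookkeeping for the coefficient of $G_{k-r}$ checks out.
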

Writing $r - s$ for $r$ in~\eqref{eq.d9p08dl} and setting $s=0$ gives
\begin{equation*}
\begin{split}
H_r G_{k + 1}  - H_{r + 1} G_k  &= ( - 1)^{r - 1} \left( {2H_1  - H_0 } \right)G_{k - r} \\
&\qquad + ( - 1)^r \left( {G_0 H_{k - r - 1}  + G_1 H_{k - r} } \right);
\end{split}
\end{equation*}
and, in particular,
\begin{equation*}
G_rG_{k + 1} - G_{r + 1}G_k=(-1)^r\left(G_0G_{k - r + 1} - G_1G_{k - r}\right).
\end{equation*} 
\subsubsection{Generalizations of Fibonacci power formulas}
The well-known identity:
\begin{equation}\label{eq.zrwo56t}
G_{k + 1}^2 + G_{k - 2}^2=2\left(G_k^2 + G_{k - 1}^2\right),
\end{equation}
has the gibonacci function form:
\begin{equation*}
g(k + 1)^2 + g(k - 2)^2=2\left(g(k)^2 + g(k - 1)^2\right),
\end{equation*}
which, upon differentiation, gives
\begin{equation*}
\begin{split}
&g(k + 1)\frac{\partial }{{\partial k}}g(k + 1) + g(k - 2)\frac{\partial }{{\partial k}}g(k - 2)\\
&\qquad = 2g(k)\frac{\partial }{{\partial k}}g(k) + 2g(k - 1)\frac{\partial }{{\partial k}}g(k - 1),
\end{split}
\end{equation*}
and by~\eqref{eq.brp6mp3} and~\eqref{eq.k32lr6u}:
\begin{equation*}
\beta ^{k + 1} G_{k + 1}  + \beta ^{k - 2} G_{k - 2}  = 2\left( {\beta ^k G_k  + \beta ^{k - 1} G_{k - 1} } \right);
\end{equation*}
and multiplying through by $\beta^{s - k}$, $s$ an arbitrary integer:
\begin{equation}\label{eq.qf73jtg}
\beta ^{s + 1} G_{k + 1}  + \beta ^{s - 2} G_{k - 2}  = 2\left( {\beta ^s G_k  + \beta ^{s - 1} G_{k - 1} } \right);
\end{equation}
and also
\begin{equation}\label{eq.s6sm0bz}
\alpha ^{s + 1} G_{k + 1}  + \alpha ^{s - 2} G_{k - 2}  = 2\left( {\alpha ^s G_k  + \alpha ^{s - 1} G_{k - 1} } \right).
\end{equation}
Combining~\eqref{eq.qf73jtg} and~\eqref{eq.s6sm0bz} according to the Binet formula yields the following generalization of~\eqref{eq.zrwo56t}.
\begin{proposition}
If $s$ and $k$ are any integers, then
\begin{equation*}
G_{k + 1} H_{s + 1}  + G_{k - 2} H_{s - 2}  = 2G_k H_s  + 2G_{k - 1} H_{s - 1}.
\end{equation*}
\end{proposition}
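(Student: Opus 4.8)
The plan is to apply the second component of the method to the known identity \eqref{eq.zrwo56t} --- which is itself immediate from $G_{k+1}=G_k+G_{k-1}$ and $G_{k-2}=G_k-G_{k-1}$ --- and then lift the resulting relation to the sequence $(H_j)$ through its Binet representation \eqref{eq.i07nxs0}. This follows exactly the three-stage route used for the generalizations derived just above: first differentiate the gibonacci-function form of \eqref{eq.zrwo56t} and take the imaginary part to obtain a relation among powers of $\beta$; then invoke Proposition~\ref{prop.invariant} to get the twin relation with $\alpha$ in place of $\beta$; finally combine the two.

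For the first stage I would write the function form $g(k+1)^2+g(k-2)^2=2\bigl(g(k)^2+g(k-1)^2\bigr)$, differentiate through with respect to $k$ by the product rule, and take imaginary parts using the prescriptions \eqref{eq.brp6mp3} and \eqref{eq.k32lr6u}. Each of the four terms picks up the common factor $B_G\pi$, which cancels, leaving $\beta^{k+1}G_{k+1}+\beta^{k-2}G_{k-2}=2\bigl(\beta^kG_k+\beta^{k-1}G_{k-1}\bigr)$; multiplying through by $\beta^{\,s-k}$ with $s$ an arbitrary integer then frees the second index and yields \eqref{eq.qf73jtg}. The second stage is automatic: \eqref{eq.qf73jtg} contains no irrationals beyond $\beta$, so Proposition~\ref{prop.invariant} immediately hands us its $\alpha$-version \eqref{eq.s6sm0bz}.

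For the last stage, set $A_H=(H_1-H_0\beta)/\sqrt5$ and $B_H=(H_0\alpha-H_1)/\sqrt5$, so that $A_H\alpha^m+B_H\beta^m=H_m$ for every integer $m$ by \eqref{eq.i07nxs0}. Multiplying \eqref{eq.s6sm0bz} by $A_H$, \eqref{eq.qf73jtg} by $B_H$, and adding, the four coefficient pairs $\{\alpha^{s+1},\beta^{s+1}\}$, $\{\alpha^{s-2},\beta^{s-2}\}$, $\{\alpha^{s},\beta^{s}\}$, $\{\alpha^{s-1},\beta^{s-1}\}$ collapse to $H_{s+1},H_{s-2},H_s,H_{s-1}$, and the asserted identity $G_{k+1}H_{s+1}+G_{k-2}H_{s-2}=2G_kH_s+2G_{k-1}H_{s-1}$ drops out. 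The one point worth flagging is conceptual rather than computational: although \eqref{eq.qf73jtg} and \eqref{eq.s6sm0bz} were produced for the particular sequence $(G_j)$, the powers $\alpha^{\,s+c}$ and $\beta^{\,s+c}$ occurring in them may be recombined with the Binet coefficients of any second gibonacci sequence $(H_j)$, and it is this recombination that upgrades the classical $(G_j)$-only identity to the two-sequence form. Past that, the only technical care needed is keeping the exponents uniform in $s$ (via the multiplier $\beta^{\,s-k}$, and symmetrically $\alpha^{\,s-k}$) before \eqref{eq.qf73jtg} and \eqref{eq.s6sm0bz} are combined.
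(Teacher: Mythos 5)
Your proposal is correct and follows essentially the same route as the paper: differentiate the gibonacci-function form of \eqref{eq.zrwo56t}, take imaginary parts via \eqref{eq.brp6mp3} and \eqref{eq.k32lr6u}, multiply by $\beta^{s-k}$ to obtain \eqref{eq.qf73jtg}, pass to \eqref{eq.s6sm0bz} by Proposition~\ref{prop.invariant}, and combine the two with the Binet coefficients of $(H_j)$. Your explicit remark that the recombination may use the Binet coefficients of a \emph{different} gibonacci sequence is exactly the step the paper leaves implicit in the phrase ``combining according to the Binet formula.''
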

Long's identities~\cite[(31)--(35)]{long86} are all special cases of the above proposition.

\bigskip

The reader is invited to apply the method (second component) to verify that the identity: 
\begin{equation*}
G_{k + 1}^3  = 3G_k^3  + 6G_{k - 1}^3  - 3G_{k - 2}^3  - G_{k - 3}^3,\quad \text{\cite[Equation (3)]{brousseau68}},
\end{equation*}
has the following generalization.
\begin{proposition}
If $k$, $r$ and $s$ are any integers, then
\begin{equation*}
\begin{split}
G_{k + 1} H_{r + 1} I_{s + 1}  &= 3G_k H_r I_s  + 6G_{k - 1} H_{r - 1} I_{s - 1}\\
&\qquad  - 3G_{k - 2} H_{r - 2} I_{s - 2}  - G_{k - 3} H_{r - 3} I_{s - 3},
\end{split}
\end{equation*}
where $(G_j)$, $(H_j)$ and $(I_j)$ are gibonacci sequences.
\end{proposition}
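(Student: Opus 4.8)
The plan is to follow the second component of the method, promoting the three factors of the cube one at a time; throughout I shall use the following lifting principle, which is already implicit in the earlier second-component examples. A polynomial identity in the terms of one or more gibonacci sequences that holds for every integer value of a distinguished index $k$ remains valid when $G_{k+1},G_k,G_{k-1},\dots$ are replaced by $g(x+1),g(x),g(x-1),\dots$ for real $x$: eliminating the neighbouring terms through $g(x+1)=g(x)+g(x-1)$, which holds for all real $x$, turns the identity into a polynomial identity in $g(x)$ and $g(x-1)$, and this holds because the pairs $(G_k,G_{k-1})$ already range over a Zariski-dense subset of $\R^2$.

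First I would take the gibonacci-function form of the given identity,
\[
g(x+1)^3 = 3g(x)^3 + 6g(x-1)^3 - 3g(x-2)^3 - g(x-3)^3,
\]
differentiate with respect to $x$, divide by $3$, evaluate at $x=k\in\Z$ and take imaginary parts. Since $g(k+j)=G_{k+j}$ is real, \eqref{eq.brp6mp3} and \eqref{eq.k32lr6u} give $\Im(g(k+j)^2\,g'(k+j))=G_{k+j}^2\,B_G\pi\beta^{k+j}$, so that cancelling the common nonzero factor $B_G\pi$ leaves
\[
G_{k+1}^2\beta^{k+1} = 3G_k^2\beta^k + 6G_{k-1}^2\beta^{k-1} - 3G_{k-2}^2\beta^{k-2} - G_{k-3}^2\beta^{k-3},
\]
an identity in gibonacci terms and the irrationals $\alpha,\beta$ only, hence -- by Proposition~\ref{prop.invariant} -- also valid with every $\beta$ replaced by $\alpha$. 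Multiplying the $\beta$-identity by $\beta^{\,r-k}$ and the $\alpha$-identity by $\alpha^{\,r-k}$, with $r$ an arbitrary integer, and forming $A_H$ times the $\alpha$-identity plus $B_H$ times the $\beta$-identity, the Binet formula $H_m=A_H\alpha^m+B_H\beta^m$ produces
\[
G_{k+1}^2 H_{r+1} = 3G_k^2 H_r + 6G_{k-1}^2 H_{r-1} - 3G_{k-2}^2 H_{r-2} - G_{k-3}^2 H_{r-3},
\]
valid for all integers $k,r$.

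Now I would run the construction once more. By the lifting principle this last identity holds in function form in $x$ with $r$ held fixed; differentiating with respect to $x$, dividing by $2$, evaluating at $x=k\in\Z$, and taking imaginary parts -- using $\Im(g(k+j)\,g'(k+j)\,H_{r+j})=G_{k+j}H_{r+j}\,B_G\pi\beta^{k+j}$ and again cancelling $B_G\pi$ -- gives
\[
G_{k+1}H_{r+1}\beta^{k+1} = 3G_kH_r\beta^k + 6G_{k-1}H_{r-1}\beta^{k-1} - 3G_{k-2}H_{r-2}\beta^{k-2} - G_{k-3}H_{r-3}\beta^{k-3}.
\]
Multiplying by $\beta^{\,s-k}$ with $s$ an arbitrary integer, invoking Proposition~\ref{prop.invariant} for the companion $\alpha$-identity, and combining with weights $A_I,B_I$ via $I_m=A_I\alpha^m+B_I\beta^m$, I obtain
\[
G_{k+1}H_{r+1}I_{s+1} = 3G_kH_rI_s + 6G_{k-1}H_{r-1}I_{s-1} - 3G_{k-2}H_{r-2}I_{s-2} - G_{k-3}H_{r-3}I_{s-3},
\]
which, after renaming the factors, is the asserted identity.

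The two differentiations and the Binet bookkeeping are routine; the step I expect to require most care is the passage, before the second differentiation, from the integer identity $G_{k+1}^2H_{r+1}=\dots$ to its function form -- justified by the Zariski-density argument of the first paragraph. A minor caveat: cancelling $B_G\pi$ presupposes $B_G=(G_0\alpha-G_1)/\sqrt5\neq 0$, which is automatic when the seeds $G_0,G_1$ are rational (since $\alpha$ is irrational and the seeds are not both zero); for arbitrary real seeds the conclusion then follows by continuity, the final identity being polynomial in all six seed values.
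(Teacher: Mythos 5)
Your overall strategy is exactly the one the paper intends (the proposition is left to the reader as an application of the method's second component), and the differentiation, imaginary-part extraction, $\alpha\leftrightarrow\beta$ interchange and Binet recombination in both passes are carried out correctly. The genuine problem is the ``lifting principle'' you invoke to pass from an identity valid at integer indices to its gibonacci-function form valid for all real $x$. As stated, that principle is false: $\bigl(G_{k+1}G_{k-1}-G_k^2\bigr)^2=e_G^2$ is a polynomial identity in gibonacci terms holding for every integer $k$, yet its function form fails for non-integer $x$, since $g(x+1)g(x-1)-g(x)^2=e_G\,e^{i\pi x}$. Your Zariski-density justification fails for the same underlying reason: the points $(G_k,G_{k-1})$ all satisfy $v^2+uv-u^2=\pm e_G$, so they lie in a proper algebraic subset of $\R^2$ and are not Zariski dense. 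This is precisely the pitfall the paper flags in the Remark preceding the Gelin-Ces\`aro subsection and works around there (and in the inverse-tangent example) by keeping track of factors such as $(-1)^{2k}$.

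For the two identities you lift, the function forms do in fact hold for all real $x$, but this has to be verified rather than assumed. Writing $g(x)=A\alpha^x+B\beta^x$ one has $g(x+j)^3=A^3\alpha^{3(x+j)}+B^3\beta^{3(x+j)}+3AB(-1)^j e^{i\pi x}g(x+j)$; the pure $\alpha$- and $\beta$-powers match across the identity because $\alpha^{12}=3\alpha^{9}+6\alpha^{6}-3\alpha^{3}-1$ (similarly for $\beta$), while the cross terms match because the sign-alternated combination $g(x+1)+3g(x)-6g(x-1)-3g(x-2)+g(x-3)$ vanishes identically (reduce everything to $g(x)$ and $g(x-1)$ via the recurrence, which holds for all real $x$). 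An entirely analogous check validates $g(x+1)^2H_{r+1}=3g(x)^2H_r+6g(x-1)^2H_{r-1}-3g(x-2)^2H_{r-2}-g(x-3)^2H_{r-3}$ before your second differentiation. With these two verifications substituted for the lifting principle, your argument is complete; the $B_G\neq 0$ caveat is adequately handled by your continuity remark, since when $B_G=0$ the imaginary parts all vanish and the intermediate equation degenerates to $0=0$.
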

\subsubsection{Generalizations of a triple-angle identity of Lucas}
Differentiating the Fibonacci function form:
\begin{equation*}
f(3k)=f(k + 1)^3 + f(k)^3 - f(k - 1)^3,
\end{equation*}
of the identity (see Vorob'ev~\cite[p.~16]{vorobev61}):
\begin{equation}\label{eq.seu1n3o}
F_{3k}=F_{k + 1}^3 + F_k^3 - F_{k - 1}^3,
\end{equation}
gives, after using~\eqref{eq.mbcae9n} and~\eqref{eq.bf3c8fj},
\begin{equation*}
\beta ^{3k}  = F_{k + 1}^2 \beta ^{k + 1}  + F_k^2 \beta ^k  - F_{k - 1}^2 \beta ^{k - 1},
\end{equation*}
that is
\begin{equation}\label{eq.vlr5b97}
\beta ^{2k + s}  = F_{k + 1}^2 \beta ^{s + 1}  + F_k^2 \beta ^s  - F_{k - 1}^2 \beta ^{s - 1},
\end{equation}
where $s$ is an arbitrary integer; and also
\begin{equation}\label{eq.igzbpvl}
\alpha ^{2k + s}  = F_{k + 1}^2 \alpha ^{s + 1}  + F_k^2 \alpha ^s  - F_{k - 1}^2 \alpha ^{s - 1}.
\end{equation}
Combining~\eqref{eq.vlr5b97} and~\eqref{eq.igzbpvl}, we have the first generalization of~\eqref{eq.seu1n3o}.
\begin{proposition}
If $k$ and $s$ are any integers, then
\begin{equation}
G_{2k + s}  = F_{k + 1}^2 G_{s + 1}  + F_k^2 G_s  - F_{k - 1}^2 G_{s - 1}.
\end{equation}
\end{proposition}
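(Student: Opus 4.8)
The plan is to apply the second component of the method to Lucas's triple-angle identity~\eqref{eq.seu1n3o} and then reverse the Binet decomposition; indeed most of the work is already contained in~\eqref{eq.vlr5b97} and~\eqref{eq.igzbpvl}.

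First I would record how those two relations come about. Starting from the Fibonacci function form $f(3k)=f(k+1)^3+f(k)^3-f(k-1)^3$ of~\eqref{eq.seu1n3o}, I differentiate through with respect to $k$, obtaining $3f'(3k)=3f(k+1)^2f'(k+1)+3f(k)^2f'(k)-3f(k-1)^2f'(k-1)$. At an integer argument the base factors $f(k+1)$, $f(k)$, $f(k-1)$ are the real numbers $F_{k+1}$, $F_k$, $F_{k-1}$, so passing to imaginary parts and using~\eqref{eq.mbcae9n} together with~\eqref{eq.bf3c8fj} extracts the squared factors cleanly; clearing the common factor and multiplying by $\beta^{s-k}$ for an arbitrary integer $s$ gives~\eqref{eq.vlr5b97}, and Proposition~\ref{prop.invariant}, the invariance of Fibonacci identities under $\alpha\leftrightarrow\beta$, supplies the companion relation~\eqref{eq.igzbpvl}.

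The final step is the linear combination: form $A_G$ times~\eqref{eq.igzbpvl} plus $B_G$ times~\eqref{eq.vlr5b97}, where $A_G$ and $B_G$ are the Binet coefficients from~\eqref{eq.r25jazv}--\eqref{eq.zumcdp3}. On the left, by~\eqref{eq.i07nxs0} this reassembles to $A_G\alpha^{2k+s}+B_G\beta^{2k+s}=G_{2k+s}$; on the right, each group $A_G\alpha^{m}+B_G\beta^{m}$ with $m$ running over $s+1$, $s$, $s-1$ collapses to $G_m$, and one reads off $G_{2k+s}=F_{k+1}^2G_{s+1}+F_k^2G_s-F_{k-1}^2G_{s-1}$. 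I do not expect a genuine obstacle: the only delicate point is the legitimacy of the imaginary-part prescription, which is already established by the justification in Section~\ref{sec.additional}, and the closing move is exactly the $\alpha,\beta\to G$ linear-combination device used for the other propositions in this section.
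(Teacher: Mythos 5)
Your proposal is correct and follows essentially the same route as the paper: differentiate the Fibonacci function form of~\eqref{eq.seu1n3o}, take imaginary parts via~\eqref{eq.mbcae9n} and~\eqref{eq.bf3c8fj} to get~\eqref{eq.vlr5b97}, invoke Proposition~\ref{prop.invariant} for~\eqref{eq.igzbpvl}, and recombine with the Binet coefficients $A_G$, $B_G$. The only difference is that you spell out the $A_G\alpha^m+B_G\beta^m$ recombination explicitly where the paper simply says ``combining according to the Binet formula.''
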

The reader is invited to check that application of the method (second component) two more times with respect to $k$ gives the full generalization of~\eqref{eq.seu1n3o}, stated in Proposition~\ref{prop.nbs1udb}.
\begin{proposition}\label{prop.nbs1udb}
If $k$, $r$ and $s$ are any integers, then
\begin{equation*}
\begin{split}
&G_0 H_0 I_{k + r + s - 2}  + \left( {G_0 H_1  + G_1 H_0 } \right)I_{k + r + s - 1}  + G_1 H_1 I_{k + r + s} \\
&\qquad = G_{s + 1} H_{r + 1} I_{k + 1}  + G_s H_r I_k  - G_{s - 1} H_{r - 1} I_{k - 1},
\end{split}
\end{equation*}
where $(G_j)$, $(H_j)$ and $(I_j)$ are gibonacci sequences with initial terms $G_0$, $G_1$; $H_0$, $H_1$ and $I_0$, $I_1$.
\end{proposition}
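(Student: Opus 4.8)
The plan is to take as starting point the first generalization obtained just above,
\[
G_{2k+s}=F_{k+1}^2 G_{s+1}+F_k^2 G_s-F_{k-1}^2 G_{s-1},
\]
and to apply the second component of the method twice more, each time differentiating with respect to $k$; the first extra application will bring in a second gibonacci sequence $(H_j)$ together with a new shift index $r$, and the second will bring in the third sequence $(I_j)$.

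For the first extra application I would pass to the gibonacci-function form $g(2k+s)=f(k+1)^2 G_{s+1}+f(k)^2 G_s-f(k-1)^2 G_{s-1}$, with $s$ held fixed so that $G_{s+1},G_s,G_{s-1}$ are constants, differentiate with respect to $k$, and take imaginary parts using \eqref{eq.brp6mp3} and \eqref{eq.k32lr6u} on the left and \eqref{eq.bf3c8fj} on the right. After the common factors $\pi$ and $1/\sqrt5$ cancel, the left-hand side now carries the extra coefficient $-\sqrt5\,B_G=G_1-G_0\alpha$ coming from \eqref{eq.zumcdp3}, so that
\[
(G_1-G_0\alpha)\,\beta^{2k+s}=F_{k+1}G_{s+1}\,\beta^{k+1}+F_k G_s\,\beta^{k}-F_{k-1}G_{s-1}\,\beta^{k-1}.
\]
Multiplying by $\beta^{r-k}$ introduces the free index $r$; Proposition~\ref{prop.invariant} supplies the companion identity with $\alpha$ and $\beta$ interchanged; and forming $A_H$ times the $\alpha$-identity plus $B_H$ times the $\beta$-identity (with $A_H,B_H$ the analogues for $(H_j)$ of the coefficients in \eqref{eq.zumcdp3}, so that $A_H\alpha^j+B_H\beta^j=H_j$) turns the right-hand side into $F_{k+1}G_{s+1}H_{r+1}+F_k G_s H_r-F_{k-1}G_{s-1}H_{r-1}$. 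Simplifying the left-hand side by $\alpha+\beta=1$ and $\alpha\beta=-1$ (so that $A_H\alpha^{n}\beta+B_H\beta^{n}\alpha=H_n-H_{n+1}=-H_{n-1}$ with $n=k+r+s$) yields the intermediate identity
\[
G_0 H_{k+r+s-1}+G_1 H_{k+r+s}=F_{k+1}G_{s+1}H_{r+1}+F_k G_s H_r-F_{k-1}G_{s-1}H_{r-1}.
\]

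For the second extra application I would use that on the right-hand side only $F_{k+1},F_k,F_{k-1}$ depend on $k$ while on the left only the two $H$-terms do. Running the same routine once more — function form, differentiation with respect to $k$, imaginary parts via \eqref{eq.k32lr6u} for the $H$-functions and \eqref{eq.bf3c8fj} for the $f$-functions, cancellation of the common factors, Proposition~\ref{prop.invariant} for the $\alpha$-companion, and recombination through the Binet form of $(I_j)$ — gives the asserted identity once the left-hand side is collapsed. Writing $m=k+r+s$, that last combination produces the terms $A_I\alpha^{m}\beta+B_I\beta^{m}\alpha$ and $A_I\alpha^{m-1}\beta+B_I\beta^{m-1}\alpha$; by $\alpha\beta=-1$ these equal $-I_{m-1}$ and $-I_{m-2}$, and substituting them reassembles the left-hand side into exactly $G_0H_0 I_{m-2}+(G_0H_1+G_1H_0)I_{m-1}+G_1H_1 I_{m}$, which is the claimed expression.

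I expect the only real obstacle to be organizational bookkeeping: carrying the Binet coefficients through the two combining steps and repeatedly using $I_{j+1}=I_j+I_{j-1}$ — equivalently $\alpha^{j}\beta=\alpha^{j}-\alpha^{j+1}$ with $\alpha\beta=-1$ — to fold mixed powers of $\alpha$ and $\beta$ back into gibonacci numbers. The one substantive point to check is that each intermediate identity involves only (generalized) Fibonacci numbers together with $\alpha$ and $\beta$ and no further irrationals, so that Proposition~\ref{prop.invariant} is legitimately applicable at both combining steps.
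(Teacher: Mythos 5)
Your proposal is correct and follows exactly the route the paper prescribes (the paper itself only invites the reader to apply the second component of the method two more times with respect to $k$, which is precisely what you carry out). Your bookkeeping checks out: the key reductions $A_H\alpha^{n}\beta+B_H\beta^{n}\alpha=-H_{n-1}$ via $\alpha\beta=-1$, and the final collapse of the left-hand side to $G_0H_0I_{m-2}+(G_0H_1+G_1H_0)I_{m-1}+G_1H_1I_{m}$, are both right, and the intermediate identities contain no irrationals beyond $\alpha$ and $\beta$, so Proposition~\ref{prop.invariant} applies at each combining step.
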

In particular,
\begin{equation*}
G_0^2 G_{3k - 2}  + 2G_0 G_1 G_{3k - 1}  + G_1^2 G_{3k}  = G_{k + 1}^3  + G_k^3  - G_{k - 1}^3,
\end{equation*}
with the special cases:
\begin{gather*}
F_{3k}=F_{k + 1}^3 + F_k^3 - F_{k - 1}^3,\\
5L_{3k}=L_{k + 1}^3 + L_k^3 - L_{k - 1}^3.
\end{gather*}
Identities (42)--(45) of Long~\cite{long86} are special cases of the identity stated in Proposition~\ref{prop.nbs1udb}.
\subsubsection{Sum of products of the terms of two gibonacci sequences}
Simple telescoping of the identity
\begin{equation*}
G_{j + k}^2  = G_{j + k + 1}G_{j + k}   - G_{j + k} G_{j + k - 1},
\end{equation*}
gives
\begin{equation*}
\sum_{j = 1}^n {G_{j + k}^2 }  = G_{n + k} G_{n + k + 1}  - G_k G_{k + 1},
\end{equation*}
whose gibonacci function form is
\begin{equation*}
\sum_{j = 1}^n {g(j + k)^2 }  = g(n + k)g(n + k + 1) - g(k)g(k + 1).
\end{equation*}
Differentiating:
\begin{equation*}
\begin{split}
2\sum_{j = 1}^n {g(j + k)\frac{\partial }{{\partial k}}g(j + k)}  &= g(n + k)\frac{\partial }{{\partial k}}g(n + k + 1) + g(n + k + 1)\frac{\partial }{{\partial k}}g(n + k)\\
&\qquad - g(k)\frac{\partial }{{\partial k}}g(k + 1) - g(k + 1)\frac{\partial }{{\partial k}}g(k),
\end{split}
\end{equation*}
and, upon use of~\eqref{eq.brp6mp3} and~\eqref{eq.k32lr6u}, we have
\begin{equation*}
2\sum_{j = 1}^n {\beta ^{j + k} G_{j + k} }  = \beta ^{n + k + 1} G_{n + k}  + \beta ^{n + k} G_{n + k + 1}  - \beta ^k G_{k + 1}  - \beta ^{k + 1} G_k;
\end{equation*}
which, multiplying through by $\beta^{s - k}$, $s$ an arbitrary integer, gives
\begin{equation}\label{eq.b0ccikn}
2\sum_{j = 1}^n {\beta ^{j + s} G_{j + k} }  = \beta ^{n + s + 1} G_{n + k}  + \beta ^{n + s} G_{n + k + 1}  - \beta ^s G_{k + 1}  - \beta ^{s + 1} G_k,
\end{equation}
and hence also
\begin{equation}\label{eq.uax0hdc}
2\sum_{j = 1}^n {\alpha ^{j + s} G_{j + k} }  = \alpha ^{n + s + 1} G_{n + k}  + \alpha ^{n + s} G_{n + k + 1}  - \alpha ^s G_{k + 1}  - \alpha ^{s + 1} G_k.
\end{equation}
Combining~\eqref{eq.b0ccikn} and~\eqref{eq.uax0hdc} according to the Binet formulas yields the result stated in the next proposition.
\begin{proposition}
If $k$, $s$ and $n$ are any integers, then
\begin{equation}\label{eq.lyde8vc}
\sum_{j = 1}^n {G_{j + k} H_{j + s} }  = G_{n + k} H_{n + s + 1}  + G_{n + k + 1} H_{n + s}  - G_{k + 1} H_s  - G_k H_{s + 1}.
\end{equation}
\end{proposition}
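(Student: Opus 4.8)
The plan is to run the second component of the method on an elementary seed identity and then recombine via the Binet formula. First I would record the telescoping identity
\[
\sum_{j=1}^{n} G_{j+k}^2 = G_{n+k}G_{n+k+1} - G_k G_{k+1},
\]
obtained by summing $G_{j+k}^2 = G_{j+k+1}G_{j+k} - G_{j+k}G_{j+k-1}$, which is immediate from $G_{j+k}(G_{j+k+1}-G_{j+k-1}) = G_{j+k}^2$. I would then pass to the gibonacci-function form, replacing each $G_{h(k)}$ by $g(h(k))$ with $k$ regarded as a real variable, so that the seed becomes $\sum_{j=1}^{n} g(j+k)^2 = g(n+k)g(n+k+1) - g(k)g(k+1)$.

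Next I would differentiate through with respect to $k$, apply $\Im$ to the whole equation, and invoke the prescriptions \eqref{eq.brp6mp3} and \eqref{eq.k32lr6u}. Because every derivative appears in the shape $g(h(k))\,\tfrac{\partial}{\partial k}g(h(k))$, each outer factor $g(h(k))$ collapses to $G_{h(k)}$ while $\Im\tfrac{\partial}{\partial k}g(h(k))$ becomes $B_G\pi\beta^{h(k)}$; cancelling the common factor $B_G\pi$ leaves a relation purely in powers of $\beta$. Multiplying through by $\beta^{s-k}$ to replace the occurrences of $k$ in the exponents by an independent integer $s$ produces \eqref{eq.b0ccikn}. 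By Proposition~\ref{prop.invariant}, the entire chain remains valid with $\alpha$ in place of $\beta$ throughout, which gives \eqref{eq.uax0hdc}.

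Finally I would eliminate the powers of $\alpha$ and $\beta$ in favour of a second gibonacci sequence $(H_j)$. Forming $A_H$ times \eqref{eq.uax0hdc} plus $B_H$ times \eqref{eq.b0ccikn}, where $A_H,B_H$ are the Binet coefficients of $(H_j)$ in the sense of \eqref{eq.i07nxs0}, each pair $\alpha^m,\beta^m$ assembles as $A_H\alpha^m + B_H\beta^m = H_m$; since the two displayed identities carry identical $G$-coefficients attached to matching powers, the linear combination collapses exactly to \eqref{eq.lyde8vc}.

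The step I expect to be the main obstacle is the bookkeeping in the recombination: one must verify that every monomial $\beta^{m}G_{h}$ in \eqref{eq.b0ccikn} has an exact counterpart $\alpha^{m}G_{h}$ in \eqref{eq.uax0hdc} with the same index $h$ and the same coefficient (for instance $\beta^{n+s+1}G_{n+k}$ matching $\alpha^{n+s+1}G_{n+k}$, both yielding $G_{n+k}H_{n+s+1}$), so that no stray $\alpha$- or $\beta$-powers survive. This is routine but is the only place where a slip could occur; the differentiation and the passage to imaginary parts are mechanical once \eqref{eq.brp6mp3}, \eqref{eq.k32lr6u} and Proposition~\ref{prop.invariant} are in hand.
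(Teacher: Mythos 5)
Your proposal follows the paper's own derivation step for step: the same telescoping seed $\sum_{j=1}^{n}G_{j+k}^2=G_{n+k}G_{n+k+1}-G_kG_{k+1}$, the same passage to gibonacci functions, differentiation in $k$, imaginary parts via \eqref{eq.brp6mp3} and \eqref{eq.k32lr6u}, the shift by $\beta^{s-k}$, the $\alpha\leftrightarrow\beta$ interchange from Proposition~\ref{prop.invariant}, and the Binet recombination. One caution at the very last step you flag as ``routine'': the left-hand side of \eqref{eq.b0ccikn}--\eqref{eq.uax0hdc} carries a factor $2$, so the recombination actually yields $2\sum_{j=1}^{n}G_{j+k}H_{j+s}$ on the left --- consistent with the $H=G$, $s=k$ specialization returning \emph{twice} the seed --- whereas \eqref{eq.lyde8vc} as printed (and your closing claim of an exact collapse) omits that $2$; the case $n=1$, $G=H=F$, $k=s=0$ gives $1$ on the left and $2$ on the right, so the factor must be restored.
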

Using generating function techniques, Berzsenyi~\cite{berzsenyi} found alternative expressions for the special case
\begin{equation*}
\sum_{j = 0}^{2n + \delta } {G_j G_{j + 2k + \delta } } ,\quad\text{$\delta=1$ or $0$}.
\end{equation*}
Long's identities~\cite[(4)--(7)]{long86} are alternative expressions for special cases of the above proposition.  Kronenburg~\cite[Identity (11.1)]{kronenburg21} also derived~\eqref{eq.lyde8vc}.
\subsubsection{Generalizations of Hoggatt and Bicknell's identity~\eqref{eq.iigew21}}
Taking the imaginary part of~\eqref{eq.vzdatpd} on page~\pageref{eq.vzdatpd}, using~\eqref{eq.mbcae9n}, we have
\begin{equation*}
\begin{split}
&\sum_{j = 0}^{4n + 1} {( - 1)^{j - 1} \binom{4n + 1}jF_{j + k}^3 \Im\frac{\partial }{{\partial k}}f(j + k)}\\ 
&\qquad = 25^n \left( {F_{2n + k + 1}^3 \Im\frac{\partial }{{\partial k}}f(2n + k + 1) - F_{2n + k}^3 \Im\frac{\partial }{{\partial k}}f(2n + k)} \right);
\end{split}
\end{equation*}
which, applying~\eqref{eq.bf3c8fj}, gives
\begin{equation*}
\sum_{j = 0}^{4n + 1} {( - 1)^{j - 1} \binom{4n + 1}jF_{j + k}^3 \beta^{j + k}} = 25^n \left( {F_{2n + k + 1}^3 \beta^{2n + k + 1} - F_{2n + k}^3 \beta^{2n + k}} \right),
\end{equation*}
and hence
\begin{equation}\label{eq.lxbgoc2}
\sum_{j = 0}^{4n + 1} {( - 1)^{j - 1} \binom{4n + 1}jF_{j + k}^3 \beta^{j + r}} = 25^n \left( {F_{2n + k + 1}^3 \beta^{2n + r + 1} - F_{2n + k}^3 \beta^{2n + r}} \right);
\end{equation}
where $r$ is an arbitrary integer.

\bigskip

Interchanging $\alpha$ and $\beta$ in~\eqref{eq.lxbgoc2}, we also have
\begin{equation}\label{eq.ovw7s6k}
\sum_{j = 0}^{4n + 1} {( - 1)^{j - 1} \binom{4n + 1}jF_{j + k}^3 \alpha^{j + r}} = 25^n \left( {F_{2n + k + 1}^3 \alpha^{2n + r + 1} - F_{2n + k}^3 \alpha^{2n + r}} \right).
\end{equation}
Combining~\eqref{eq.lxbgoc2} and~\eqref{eq.ovw7s6k} according to the Binet formula, we have the result stated in the next proposition.
\begin{proposition}
If $n$ is a non-negative integer and $k$ is any integer, then
\begin{equation}\label{eq.p7si1ht}
\sum_{j = 0}^{4n + 1} {( - 1)^{j - 1} \binom{4n + 1}jF_{j + k}^3 G_{j + r}} = 25^n \left( {F_{2n + k + 1}^3 G_{2n + r + 1} - F_{2n + k}^3 G_{2n + r}} \right).
\end{equation}
\end{proposition}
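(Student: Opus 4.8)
The plan is to run the method (second component) on the Hoggatt--Bicknell identity~\eqref{eq.iigew21}, reusing the work already done for~\eqref{eq.f6y5qaq}. Recall that replacing each Fibonacci number in~\eqref{eq.iigew21} by the Fibonacci function $f$ and differentiating through with respect to the free index $k$ produced the identity~\eqref{eq.vzdatpd} on page~\pageref{eq.vzdatpd}. The first step is to take the \emph{imaginary} part of~\eqref{eq.vzdatpd} rather than its real part: using~\eqref{eq.mbcae9n} to pull $F_{j+k}^3$ (and the corresponding cubes on the right) out of $\Im$, and~\eqref{eq.bf3c8fj} to replace each $\Im\frac{\partial}{\partial k}f(\cdot)$ by $-\pi\beta^{(\cdot)}/\sqrt5$, the universal factors $4$ and $-\pi/\sqrt5$ cancel, leaving
\begin{equation*}
\sum_{j = 0}^{4n + 1} {( - 1)^{j - 1} \binom{4n + 1}jF_{j + k}^3 \beta^{j + k}} = 25^n \left( {F_{2n + k + 1}^3 \beta^{2n + k + 1} - F_{2n + k}^3 \beta^{2n + k}} \right).
\end{equation*}
Multiplying both sides by $\beta^{\,r-k}$ for an arbitrary integer $r$ converts every exponent $j+k$ into $j+r$ (and $2n+k+1\mapsto 2n+r+1$, $2n+k\mapsto 2n+r$), which is exactly~\eqref{eq.lxbgoc2}.

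The second step is to invoke Proposition~\ref{prop.invariant}. Since~\eqref{eq.lxbgoc2} involves only Fibonacci numbers together with the single irrational $\beta$, exchanging the roles of $\alpha$ and $\beta$ is legitimate and produces the companion identity~\eqref{eq.ovw7s6k}, identical to~\eqref{eq.lxbgoc2} but with $\beta$ replaced by $\alpha$ everywhere. With both $\alpha$- and $\beta$-weighted versions in hand, the final step is to assemble the linear combination dictated by the Binet form~\eqref{eq.i07nxs0}: multiplying~\eqref{eq.ovw7s6k} by $A_G=(G_1-G_0\beta)/\sqrt5$, multiplying~\eqref{eq.lxbgoc2} by $B_G=(G_0\alpha-G_1)/\sqrt5$ as in~\eqref{eq.zumcdp3}, and adding. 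Each pair $A_G\alpha^{j+r}+B_G\beta^{j+r}$ collapses to $G_{j+r}$, and likewise $A_G\alpha^{2n+r+1}+B_G\beta^{2n+r+1}=G_{2n+r+1}$ and $A_G\alpha^{2n+r}+B_G\beta^{2n+r}=G_{2n+r}$ on the right, yielding precisely~\eqref{eq.p7si1ht}.

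The only point requiring care is the bookkeeping around the free shift: the imaginary-part computation initially ties the exponent of $\beta$ to the same index $k$ that sits in the Fibonacci subscripts, and it is the innocuous-looking multiplication by $\beta^{\,r-k}$ (and its $\alpha$-analogue) that decouples the two, so that the resulting gibonacci factor $G_{j+r}$ carries a genuinely independent free index. Once that shift is in place, every subsequent manipulation is a direct substitution, and a sanity check at $n=0$ — where the sum has only the terms $j=0,1$ and the claim reduces to the tautology $F_{k+1}^3 G_{r+1}-F_k^3 G_r = F_{k+1}^3 G_{r+1}-F_k^3 G_r$ — confirms the normalization. I do not anticipate any genuine obstacle beyond this; the heavy lifting is entirely supplied by~\eqref{eq.vzdatpd}, Proposition~\ref{prop.invariant}, and the Binet formula~\eqref{eq.i07nxs0}.
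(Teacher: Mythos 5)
Your proposal is correct and follows essentially the same route as the paper: take the imaginary part of~\eqref{eq.vzdatpd} using~\eqref{eq.mbcae9n} and~\eqref{eq.bf3c8fj} to obtain the $\beta$-weighted identity, shift by $\beta^{r-k}$ to get~\eqref{eq.lxbgoc2}, invoke Proposition~\ref{prop.invariant} for the $\alpha$-companion~\eqref{eq.ovw7s6k}, and combine via the Binet form to reach~\eqref{eq.p7si1ht}. No gaps; the $n=0$ sanity check is a nice touch but not needed.
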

Observe that~\eqref{eq.iigew21} and~\eqref{eq.f6y5qaq} are particular cases of~\eqref{eq.p7si1ht}.

\bigskip

By applying the method (second component) three more times, with $k$ as the index of interest, the reader is invited to establish the further generalization presented in the next proposition.
\begin{proposition}\label{prop.ri7x357}
If $n$ is a non-negative integer and $r$, $k$ and $s$ are any integers, then
\begin{equation*}
\begin{split}
&\sum_{j = 0}^{4n + 1} {( - 1)^{j - 1} \binom{4n + 1}jE_{j + k} G_{j + r} H_{j + s}I_{j + t} }\\
&\qquad = 25^n \left( {E_{2n + k + 1} G_{2n + r + 1} H_{2n + s + 1}I_{2n + t + 1}  - E_{2n + k} G_{2n + r} H_{2n + s}I_{2n + t} } \right);
\end{split}
\end{equation*}
where $(E)_{j\in\mathbb Z}$, $(G)_{j\in\mathbb Z}$, $(H)_{j\in\mathbb Z}$ and $(I)_{j\in\mathbb Z}$ are gibonacci sequences with seeds $E_0$, $E_1$; $G_0$, $G_1$; $H_0$, $H_1$ and $I_0$, $I_1$.
\end{proposition}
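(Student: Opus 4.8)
The plan is to take identity~\eqref{eq.p7si1ht} --- the $F^3G$ identity already derived above from Hoggatt and Bicknell's~\eqref{eq.iigew21} --- and subject it to three further applications of the method (second component), always with $k$ as the free index of interest. Each application peels off one factor $F_{j+k}$ and installs in its place a new, independent gibonacci sequence. The structural point used repeatedly is that none of the gibonacci factors that get introduced depends on $k$, so $\partial/\partial k$ acts only on the surviving Fibonacci factors.

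First application. Pass to the function form of~\eqref{eq.p7si1ht} (write $f$ for $F$ and $g$ for $G$) and differentiate with respect to $k$. Only $f(j+k)^3$ and, on the right, $f(2n+k)^3$ and $f(2n+k+1)^3$ are affected, each producing the chain-rule constant $3$. Take the imaginary part, using~\eqref{eq.brp6mp3} and~\eqref{eq.mbcae9n} to pull the gibonacci factors and the Fibonacci squares outside $\Im$, and~\eqref{eq.bf3c8fj} for $\Im\frac{\partial}{\partial k}f$; the common factor $-3\pi/\sqrt5$ divides out, leaving
\[
\sum_{j=0}^{4n+1}(-1)^{j-1}\binom{4n+1}{j}F_{j+k}^2 G_{j+r}\,\beta^{j+k}=25^n\bigl(F_{2n+k+1}^2 G_{2n+r+1}\beta^{2n+k+1}-F_{2n+k}^2 G_{2n+r}\beta^{2n+k}\bigr).
\]
Multiplying through by $\beta^{s-k}$ turns each $\beta$-exponent $(\cdot)+k$ into $(\cdot)+s$, introducing a fresh parameter $s$ independent of the $k$ still present in the $F_{j+k}^2$ factors; by Proposition~\ref{prop.invariant} the same identity holds with $\alpha$ replacing $\beta$; and forming the combination $A_H\cdot(\alpha\text{-version})+B_H\cdot(\beta\text{-version})$, with $A_H,B_H$ the Binet coefficients of $(H_j)$ read off from~\eqref{eq.i07nxs0}, converts $\beta^{j+s}$ into $H_{j+s}$ (and likewise on the right). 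This produces the $F^2GH$ analogue of~\eqref{eq.p7si1ht}.

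Second and third applications. The same three steps --- differentiate in $k$ (chain-rule constant $2$, then $1$), take the imaginary part via~\eqref{eq.mbcae9n}, \eqref{eq.bf3c8fj} and~\eqref{eq.brp6mp3}, then multiply by a power of $\beta$, exchange $\alpha\leftrightarrow\beta$ by Proposition~\ref{prop.invariant}, and recombine through the Binet formula --- are repeated verbatim. The second pass peels the next $F_{j+k}$ and installs $(I_j)$ at a new shift $t$, yielding the $FGHI$ identity; the third pass peels the last $F_{j+k}$. On this final pass the surviving $\beta$-exponent is already $j+k$ with $k$ free, so no extra multiplication is needed: exchanging $\alpha\leftrightarrow\beta$ and combining with the Binet coefficients of $(E_j)$ directly gives $E_{j+k}G_{j+r}H_{j+s}I_{j+t}$ on the left and the two corresponding products on the right. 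The weights $(-1)^{j-1}\binom{4n+1}{j}$ and the factor $25^n$ are untouched throughout, so the asserted identity follows.

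There is no genuine obstacle here, only bookkeeping: one must confirm at each stage that the already-installed gibonacci factors are constant in $k$ (so $\partial/\partial k$ lands only on Fibonacci factors and~\eqref{eq.bf3c8fj} applies), and that Proposition~\ref{prop.invariant} is invoked only on the $\beta$-power version --- the previously installed gibonacci factors being, by that same proposition, already invariant under $\tau\leftrightarrow\sigma$ and hence unaffected, which is what makes the Binet recombination legitimate. Tracking the freshly created shifts ($s$, then $t$, and finally the reused $k$) then delivers exactly the four-sequence, four-index identity stated.
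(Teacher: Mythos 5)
Your proposal is correct and follows exactly the route the paper intends: it starts from~\eqref{eq.p7si1ht} and applies the method (second component) three more times with $k$ as the index of interest, which is precisely the derivation the paper sketches and leaves to the reader. The bookkeeping (chain-rule constants dividing out, multiplication by $\beta^{s-k}$ and $\beta^{t-k}$ to free the new shifts, invocation of Proposition~\ref{prop.invariant} only after the $\sqrt5$ factors cancel, and Binet recombination with the seeds of each new gibonacci sequence) is all handled correctly.
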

\subsubsection{Generalizations of an identity of Melham}
In this section, we present a generalization of Melham's identity,~\eqref{eq.zi7ygj7} on page~\pageref{eq.zi7ygj7}, that is
\begin{equation*}
6\left( {\sum_{j = 0}^{2n - 1} {G_{k + j}^2 } } \right)^2  = F_{2n}^2 \left( {G_{k + n - 2}^4  + 4G_{k + n - 1}^4  + 4G_{k + n}^4  + G_{k + n + 1}^4 } \right);
\end{equation*}
whose Fibonacc-gibonacci function derivative is
\begin{equation*}
\begin{split}
&\left( {6\sum_{j = 0}^{2n - 1} {g(k + j)^2 } } \right)\sum_{j = 0}^{2n - 1} {g(k + j)\frac{\partial }{{\partial k}}g(k + j)} \\
&\qquad= f(2n)^2 \left( {g(k + n - 2)^3 \frac{\partial }{{\partial k}}g(k + n - 2)} \right. + 4g(k + n - 1)^3 \frac{\partial }{{\partial k}}g(k + n - 1)\\
&\qquad\left. { + 4g(k + n)^3 \frac{\partial }{{\partial k}}g(k + n) + g(k + n + 1)^3 \frac{\partial }{{\partial k}}g(k + n + 1)} \right).
\end{split}
\end{equation*}
Taking the imaginary part and applying~\eqref{eq.brp6mp3} and~\eqref{eq.k32lr6u} leads to
\begin{equation*}
\begin{split}
&6\sum_{j = 0}^{2n - 1} {G_{k + j}^2 } \sum_{j = 0}^{2n - 1} {\beta ^{k + j} G_{k + j} }\\ 
&\qquad = F_{2n}^2 \left( {\beta ^{k + n - 2} G_{k + n - 2}^3  + 4\beta ^{k + n - 1} G_{k + n - 1}^3  + 4\beta ^{k + n} G_{k + n}^3  + \beta ^{k + n + 1} G_{k + n + 1}^3 } \right),
\end{split}
\end{equation*}
which multiplying through by $\beta^{r - k}$, $r$ an arbitrary integer, gives
\begin{equation}\label{eq.xaxq153}
\begin{split}
&6\sum_{j = 0}^{2n - 1} {G_{k + j}^2 } \sum_{j = 0}^{2n - 1} {\beta ^{r + j} G_{k + j} } \\
&\qquad = F_{2n}^2 \left( {\beta ^{r + n - 2} G_{k + n - 2}^3  + 4\beta ^{r + n - 1} G_{k + n - 1}^3  + 4\beta ^{r + n} G_{k + n}^3  + \beta ^{r + n + 1} G_{k + n + 1}^3 } \right),
\end{split}
\end{equation}
and also
\begin{equation}\label{eq.uqsnbmg}
\begin{split}
&6\sum_{j = 0}^{2n - 1} {G_{k + j}^2 } \sum_{j = 0}^{2n - 1} {\alpha ^{r + j} G_{k + j} } \\
&\qquad = F_{2n}^2 \left( {\alpha ^{r + n - 2} G_{k + n - 2}^3  + 4\alpha ^{r + n - 1} G_{k + n - 1}^3  + 4\alpha ^{r + n} G_{k + n}^3  + \alpha ^{r + n + 1} G_{k + n + 1}^3 } \right).
\end{split}
\end{equation}
Combining~\eqref{eq.xaxq153} and~\eqref{eq.uqsnbmg} according to the Binet formula yields the next result.
\begin{proposition}\label{prop.fms8uy6}
If $k$ and $r$ are any integers, then
\begin{equation}
\begin{split}
&6\sum_{j = 0}^{2n - 1} {G_{k + j}^2 } \sum_{j = 0}^{2n - 1} {H_{r + j} G_{k + j} } \\
&\qquad = F_{2n}^2 \left( {H_{r + n - 2} G_{k + n - 2}^3  + 4H_{r + n - 1} G_{k + n - 1}^3  + 4H_{r + n} G_{k + n}^3  + H_{r + n + 1} G_{k + n + 1}^3 } \right).
\end{split}
\end{equation}

\end{proposition}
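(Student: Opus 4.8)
The plan is to mirror the pattern of the preceding subsections: start from Melham's identity~\eqref{eq.zi7ygj7}, apply the method (first component) to differentiate in $k$, extract the imaginary part via~\eqref{eq.brp6mp3} and~\eqref{eq.k32lr6u} to expose a single power of $\beta$ in every term, introduce a free integer $r$ by a multiplicative shift, pass to the conjugate relation through Proposition~\ref{prop.invariant}, and finally recombine via the Binet formula~\eqref{eq.i07nxs0} for the $H$-sequence.

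First I would rewrite~\eqref{eq.zi7ygj7} in Fibonacci–gibonacci function form, replacing $F_{2n}$ by $f(2n)$ and each $G_{k+j}$ by $g(k+j)$ with $k$ now a real variable, and differentiate both sides with respect to $k$. Since $f(2n)$ does not depend on $k$, only the $g$-factors are differentiated, and after cancelling the common numerical factor one obtains the derivative equation displayed in this subsection. Next I would take the imaginary part: the outer real factors $g(k+j)^2$ and $g(k+j)$ come out of $\Im$ by~\eqref{eq.brp6mp3} (legitimate since at an integer value of $k$ every gibonacci-function value is real), while $\Im\frac{\partial}{\partial k}g(\,\cdot\,)$ is replaced by $B_G\pi\beta^{(\cdot)}$ using~\eqref{eq.k32lr6u}. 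The constant $B_G\pi$ is common to both sides and nonzero (as $G_0,G_1$ are not both zero, $B_G=(G_0\alpha-G_1)/\sqrt5\neq 0$), so it cancels, leaving an identity in which every term carries exactly one power of $\beta$, valid for all integers $k$.

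The free parameter $r$ then enters by multiplying through by $\beta^{\,r-k}$, which sends $\beta^{k+j}$ to $\beta^{\,r+j}$ and $\beta^{k+n-2}$ to $\beta^{\,r+n-2}$, and so on; this is legitimate because $r-k\in\mathbb Z$ and $\beta^{\,r-k}\neq 0$, and the result is~\eqref{eq.xaxq153}. That relation involves only gibonacci numbers, the Fibonacci number $F_{2n}$, and powers of $\beta$, and no other irrationals, so Proposition~\ref{prop.invariant} furnishes the companion identity~\eqref{eq.uqsnbmg} obtained under the exchange $\alpha\leftrightarrow\beta$. Forming $A_H$ times~\eqref{eq.uqsnbmg} plus $B_H$ times~\eqref{eq.xaxq153}, where $A_H=(H_1-H_0\beta)/\sqrt5$ and $B_H=(H_0\alpha-H_1)/\sqrt5$ so that $A_H\alpha^m+B_H\beta^m=H_m$ for every integer $m$ by~\eqref{eq.i07nxs0}, replaces $\beta^{\,r+j}$ by $H_{r+j}$ inside the sum on the left and $\beta^{\,r+n-2},\dots,\beta^{\,r+n+1}$ by $H_{r+n-2},\dots,H_{r+n+1}$ on the right, producing exactly the asserted identity.

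I do not anticipate a genuine obstacle, as each step instantiates machinery already established in the paper. The only point requiring care is the bookkeeping of numerical constants: the surviving factor $6$ on the left arises from cancelling the $2$'s produced by differentiating the squares in $\bigl(\sum g^2\bigr)^2$ against the $2$'s hidden in the coefficients $4$ and $16$ produced by differentiating the fourth powers on the right, and one must verify that $B_G\pi$ drops out cleanly on both sides; once that is checked, the shift by $\beta^{\,r-k}$ and the Binet recombination are purely formal.
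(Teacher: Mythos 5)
Your proposal is correct and follows essentially the same route as the paper: differentiate the Fibonacci--gibonacci function form of Melham's identity~\eqref{eq.zi7ygj7} with respect to $k$, take the imaginary part via~\eqref{eq.brp6mp3} and~\eqref{eq.k32lr6u} (cancelling $B_G\pi$), multiply by $\beta^{\,r-k}$ to obtain~\eqref{eq.xaxq153}, invoke Proposition~\ref{prop.invariant} for~\eqref{eq.uqsnbmg}, and recombine by the Binet formula. The only difference is that you make the constant bookkeeping and the $A_H,B_H$ recombination explicit, which the paper leaves implicit.
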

Repeated application of the method (second component) two more times  to the identity stated in Proposition~\ref{prop.fms8uy6} with $k$ as the index of interest establishes the next result.
\begin{proposition}
If $k$, $r$, $s$ and $t$ are any integers, then
\begin{equation*}
\begin{split}
&2\sum_{j = 0}^{2n - 1} {G_{k + j} J_{t + j} } \sum_{j = 0}^{2n - 1} {H_{r + j} I_{s + j} }  + 2\sum_{j = 0}^{2n - 1} {I_{s + j} J_{t + j} } \sum_{j = 0}^{2n - 1} {G_{k + j} H_{r + j} } \\
&\qquad\qquad + 2\sum_{j = 0}^{2n - 1} {G_{k + j} I_{s + j} } \sum_{j = 0}^{2n - 1} {H_{r + j} J_{t + j} } \\
& = F_{2n}^2 \left( {G_{k + n - 2} H_{r + n - 2} I_{s + n - 2} J_{t + n - 2}  + 4\,G_{k + n - 1} H_{r + n - 1} I_{s + n - 1} J_{t + n - 1} } \right.\\
&\qquad\qquad\left. { + 4\,G_{k + n} H_{r + n} I_{s + n} J_{t + n}  + G_{k + n + 1} H_{r + n + 1} I_{s + n + 1} J_{t + n + 1} } \right);
\end{split}
\end{equation*}
where $(G_j)$, $(H_j)$, $(I_j)$ and $(J_j)$ are gibonacci sequences.
\end{proposition}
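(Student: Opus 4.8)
The plan is to derive the identity by applying the method (second component) twice, with $k$ as the index of interest, starting from Melham's generalization recorded in Proposition~\ref{prop.fms8uy6}. In gibonacci function form that identity reads
\[
6\Bigl(\sum_{j=0}^{2n-1}g(k+j)^2\Bigr)\Bigl(\sum_{j=0}^{2n-1}h(r+j)g(k+j)\Bigr)=f(2n)^2\sum_{m}c_m\,h(r+m)g(k+m)^3,
\]
where $m$ runs over $\{n-2,n-1,n,n+1\}$ with weights $c_m=1,4,4,1$ and $f(2n)=F_{2n}$ is constant in $k$. Differentiating with respect to $k$ by the product rule, every resulting term carries exactly one factor $\tfrac{\partial}{\partial k}g(\cdot)$; hence, after taking the imaginary part and invoking the gibonacci prescriptions~\eqref{eq.brp6mp3} and~\eqref{eq.k32lr6u}, every term acquires the common factor $B_G\pi$ together with a power of $\beta$. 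Cancelling $B_G\pi$ and dividing out the overall numerical factor yields
\[
4\Bigl(\sum_j G_{k+j}\beta^{k+j}\Bigr)\Bigl(\sum_j H_{r+j}G_{k+j}\Bigr)+2\Bigl(\sum_j G_{k+j}^2\Bigr)\Bigl(\sum_j H_{r+j}\beta^{k+j}\Bigr)=F_{2n}^2\sum_m c_m\,H_{r+m}G_{k+m}^2\beta^{k+m}.
\]

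Next I would multiply through by $\beta^{s-k}$, introducing a new free integer $s$ in place of $k$ in every power of $\beta$, invoke Proposition~\ref{prop.invariant} to obtain the companion identity with $\alpha$ in place of $\beta$, and combine the $\beta$- and $\alpha$-versions according to the Binet formula $I_{s+j}=A_I\alpha^{s+j}+B_I\beta^{s+j}$ of a fresh gibonacci sequence $(I_j)$ with Binet coefficients $A_I,B_I$. This produces the intermediate identity
\[
4\Bigl(\sum_j G_{k+j}I_{s+j}\Bigr)\Bigl(\sum_j H_{r+j}G_{k+j}\Bigr)+2\Bigl(\sum_j G_{k+j}^2\Bigr)\Bigl(\sum_j H_{r+j}I_{s+j}\Bigr)=F_{2n}^2\sum_m c_m\,H_{r+m}G_{k+m}^2 I_{s+m}.
\]
Running the identical cycle once more on this identity — differentiate in $k$, take imaginary parts via~\eqref{eq.brp6mp3} and~\eqref{eq.k32lr6u}, cancel $B_G\pi$ and the numerical factor, multiply by $\beta^{t-k}$, apply Proposition~\ref{prop.invariant}, and combine using $J_{t+j}=A_J\alpha^{t+j}+B_J\beta^{t+j}$ for a fourth gibonacci sequence $(J_j)$ — converts the surviving $G_{k+m}^2$ into $G_{k+m}J_{t+m}$, splits the left side into the three bilinear sum-products displayed in the statement, and gives exactly the asserted identity once $\sum_m c_m$ is expanded into its four terms.

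The computation is routine, so the main hazard is bookkeeping. In the second cycle the factor $g(k+j)$ occurs in two of the sum-products simultaneously, so differentiation in $k$ spawns three distinct bilinear terms on the left, and one must check that the numerical coefficients collapse to $2,2,2$ on the left and $1,4,4,1$ on the right; a quick sanity check is the degenerate specialization $G=H=I=J$, $k=r=s=t$, which must reduce to Melham's identity~\eqref{eq.zi7ygj7}. A minor caveat is that cancelling $B_G\pi$ presupposes $B_G\neq0$; since both sides are polynomial in the seeds of all four sequences, the general case follows by specialization from the generic one.
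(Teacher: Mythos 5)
Your proposal is correct and follows essentially the same route as the paper, which simply states that the result follows by applying the method (second component) two more times to Proposition~\ref{prop.fms8uy6} with $k$ as the index of interest; your intermediate identities and the coefficient bookkeeping (the factors $12,6,3$ collapsing to $4,2,1$ in the first cycle and $4,4,4,2$ collapsing to $2,2,2,1$ in the second) check out. The only detail you add beyond the paper is the harmless caveat about $B_G\neq 0$, handled by specialization.
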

\subsubsection{Generalizations of an identity of Howard}
Taking the imaginary part of~\eqref{eq.vig7d8s} on page~\pageref{eq.vig7d8s} gives
\begin{equation}\label{eq.f6g296p}
G_{k + r}\Im\frac{d}{{ds}}f(s) + ( - 1)^{r - 1} G_k\Im\frac{\partial }{{\partial s}}f(s - r) = F_r\Im\frac{\partial }{{\partial s}}g(k + s);
\end{equation}
which, on using~\eqref{eq.k32lr6u}, yields
\begin{equation}\label{eq.ybzjodo}
\beta ^s G_{k + r}  + ( - 1)^{r - 1} \beta ^{s - r} G_k  = \beta ^{k + s - 1} G_0 F_r  + \beta ^{k + s} G_1 F_r
\end{equation}
and hence, also
\begin{equation}\label{eq.v554f0r}
\alpha ^s G_{k + r}  + ( - 1)^{r - 1} \alpha ^{s - r} G_k  = \alpha ^{k + s - 1} G_0 F_r  + \alpha ^{k + s} G_1 F_r.
\end{equation}
Combining~\eqref{eq.ybzjodo} and~\eqref{eq.v554f0r} provides the following generalization of Howard's identity~\eqref{eq.e715lgu}.
\begin{proposition}
If $r$, $s$ and $k$ are any integers, then
\begin{equation*}
H_s G_{k + r}  + ( - 1)^{r - 1} H_{s - r} G_k  = F_r \left( {G_0 H_{k + s - 1} + G_1 H_{k + s} } \right).
\end{equation*}
\end{proposition}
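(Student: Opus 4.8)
Since identities~\eqref{eq.ybzjodo} and~\eqref{eq.v554f0r}, derived just above the statement, already carry essentially all the content, the plan is short: combine them according to the Binet representation~\eqref{eq.i07nxs0}. Let $(H_j)$ be a gibonacci sequence with seeds $H_0,H_1$, and write $H_j=A_H\alpha^j+B_H\beta^j$ with $A_H=(H_1-H_0\beta)/\sqrt5$ and $B_H=(H_0\alpha-H_1)/\sqrt5$, exactly as in~\eqref{eq.zumcdp3}. I would multiply~\eqref{eq.v554f0r} through by $A_H$ and~\eqref{eq.ybzjodo} through by $B_H$, and add the two resulting equations.

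On the left this turns $A_H\alpha^sG_{k+r}+B_H\beta^sG_{k+r}$ into $(A_H\alpha^s+B_H\beta^s)G_{k+r}=H_sG_{k+r}$, and similarly $A_H\alpha^{s-r}G_k+B_H\beta^{s-r}G_k$ into $H_{s-r}G_k$; on the right, the pair $A_H\alpha^{k+s-1}+B_H\beta^{k+s-1}$ assembles into $H_{k+s-1}$ and $A_H\alpha^{k+s}+B_H\beta^{k+s}$ into $H_{k+s}$. After factoring out the common $F_r$ on the right this gives precisely
\[
H_sG_{k+r}+(-1)^{r-1}H_{s-r}G_k=F_r\left(G_0H_{k+s-1}+G_1H_{k+s}\right),
\]
which is the claim. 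Note that~\eqref{eq.ybzjodo} itself is obtained by passing Howard's identity~\eqref{eq.e715lgu} to the function form~\eqref{eq.zgdv7h6}, differentiating with respect to $s$ to get~\eqref{eq.vig7d8s}, taking imaginary parts via~\eqref{eq.mbcae9n} to reach~\eqref{eq.f6g296p}, and then substituting $\Im\,\partial_s f$ from~\eqref{eq.bf3c8fj} and $\Im\,\partial_s g$ from~\eqref{eq.k32lr6u}; while~\eqref{eq.v554f0r} follows at once from Proposition~\ref{prop.invariant}, since~\eqref{eq.ybzjodo} contains no irrational quantities besides $\alpha$ and $\beta$.

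I do not expect a genuine obstacle here. The only point requiring care is the sign/normalization bookkeeping: in deriving~\eqref{eq.ybzjodo} one must use $\alpha\beta=-1$ (so $\alpha\beta^{k+s}=-\beta^{k+s-1}$) to rewrite the right-hand side $-F_r(G_0\alpha-G_1)\beta^{k+s}$ as $F_r(G_0\beta^{k+s-1}+G_1\beta^{k+s})$, and in the final step one must match the exponents of $\alpha$ and $\beta$ correctly so that each combined term reassembles the intended $(H_j)$-value. Everything else is the routine linear combination described above.
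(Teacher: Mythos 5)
Your proposal is correct and follows exactly the paper's route: take the imaginary part of~\eqref{eq.vig7d8s} to get~\eqref{eq.ybzjodo}, invoke Proposition~\ref{prop.invariant} for~\eqref{eq.v554f0r}, and recombine via the Binet coefficients of $(H_j)$; your sign check using $\alpha\beta=-1$ is also the right bookkeeping. Nothing further is needed.
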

By taking the imaginary part of~\eqref{eq.dzcinsk} on page~\pageref{eq.dzcinsk}, the reader is invited to establish the result stated in Proposition~\ref{prop.o7pee2v}.
\begin{proposition}\label{prop.o7pee2v}
If $k$, $r$, $s$ and $t$ are integers, then
\begin{equation*}
\begin{split}
&F_s \left( {G_0 H_{k + r + t - 1}  + G_1 H_{k + r + t} } \right) - ( - 1)^r F_{s - r} G_k \left( {H_{t + 1}  + H_{t - 1} } \right)\\
&\qquad + ( - 1)^r G_k H_{s - r + t} \\
&\qquad = G_{k + s} H_{r + t}. 
\end{split}
\end{equation*}
\end{proposition}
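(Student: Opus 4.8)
The plan is to run the second component of the method on equation~\eqref{eq.dzcinsk}, which the paper has already produced by differentiating the function form~\eqref{eq.zgdv7h6} of Howard's identity~\eqref{eq.e715lgu} with respect to $r$. First I would evaluate~\eqref{eq.dzcinsk} at an integer $r$ and take the imaginary part of both sides: \eqref{eq.mbcae9n} and~\eqref{eq.bf3c8fj} handle the $f$-factors, \eqref{eq.brp6mp3} and~\eqref{eq.k32lr6u} handle the $g$-factor, and the rule $i\to 1$ handles the term $(-1)^{r-1}i\pi f(s-r)g(k)$. After dividing by $\pi$ this should give
\begin{equation*}
F_s B_G\,\beta^{k+r} + (-1)^{r-1}F_{s-r}G_k + (-1)^{r-1}\frac{\beta^{s-r}}{\sqrt 5}\,G_k = -\frac{\beta^r}{\sqrt 5}\,G_{k+s},
\end{equation*}
where $B_G=(G_0\alpha-G_1)/\sqrt 5$ as in~\eqref{eq.zumcdp3}.

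Next I would recast this into the shape the Binet-combination trick needs. Multiplying through by $\sqrt 5$, substituting $\sqrt 5\,B_G=G_0\alpha-G_1$ and using $\alpha\beta=-1$ (so $\alpha\beta^{k+r}=-\beta^{k+r-1}$) rewrites $\sqrt 5\,F_s B_G\beta^{k+r}$ as $-F_s(G_0\beta^{k+r-1}+G_1\beta^{k+r})$; then multiplying through by $-1$ and by $\beta^t$ (an arbitrary integer, which supplies the fourth free index $t$) yields
\begin{equation*}
F_s(G_0\beta^{k+r+t-1}+G_1\beta^{k+r+t}) + (-1)^r\sqrt 5\,F_{s-r}G_k\,\beta^t + (-1)^r G_k\,\beta^{s-r+t} = G_{k+s}\,\beta^{r+t}.
\end{equation*}
It is essential here \emph{not} to collapse the two middle terms into $(-1)^r G_k\alpha^{s-r+t}$ via $\sqrt 5\,F_{s-r}=\alpha^{s-r}-\beta^{s-r}$; although algebraically valid, that step introduces a mixed $\alpha$-power which, in the combination below, reassembles into the reflected sequence $m\mapsto(-1)^m H_{-m}$ rather than $(H_m)$.

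By Proposition~\ref{prop.invariant} the last display stays valid when $\alpha$ and $\beta$ are interchanged, the only subtlety being that the isolated $\sqrt 5=\alpha-\beta$ flips sign; so one also has the same identity with every $\beta$ replaced by $\alpha$, except that the $F_{s-r}G_k$-term becomes $-(-1)^r\sqrt 5\,F_{s-r}G_k\alpha^t$ (with a minus sign). Writing $H_m=A_H\alpha^m+B_H\beta^m$ with $A_H,B_H$ as in~\eqref{eq.n2r2300} (cf.~\eqref{eq.i07nxs0}), I would then form $A_H\times(\alpha\text{-version})+B_H\times(\beta\text{-version})$. Every pure power recombines through $A_H\alpha^m+B_H\beta^m=H_m$: the first term becomes $F_s(G_0 H_{k+r+t-1}+G_1 H_{k+r+t})$, the $G_k\alpha^{s-r+t}$/$G_k\beta^{s-r+t}$ pair becomes $(-1)^r G_k H_{s-r+t}$, and the right side becomes $G_{k+s}H_{r+t}$; the two $F_{s-r}G_k$-terms combine to $(-1)^r F_{s-r}G_k\cdot\sqrt 5\,(B_H\beta^t-A_H\alpha^t)$, and Lemma~\ref{lem.r4azw23} applied to $(H_j)$, which gives $\sqrt 5\,(A_H\alpha^t-B_H\beta^t)=H_{t+1}+H_{t-1}$, turns this into $-(-1)^r F_{s-r}G_k(H_{t+1}+H_{t-1})$. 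Assembling the four pieces is exactly the asserted identity.

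The hard part is the bookkeeping rather than any idea: getting the chain-rule sign right in $f(s-r)$ when passing to the imaginary part, tracking the sign change of the lone $\sqrt 5$ under the $\alpha\leftrightarrow\beta$ exchange, and — most easily botched — pairing $A_H$ with the $\alpha$-identity and $B_H$ with the $\beta$-identity, since the reversed pairing replaces $(H_m)$ by its reflection $(-1)^m H_{-m}$ everywhere and the identity fails to close.
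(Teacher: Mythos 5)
Your proposal is correct and follows exactly the route the paper intends: the paper leaves this proposition as an exercise ("by taking the imaginary part of~\eqref{eq.dzcinsk}\ldots"), and you carry that out faithfully — the signs in the imaginary part of~\eqref{eq.dzcinsk}, the sign flip of the isolated $\sqrt 5$ under the $\alpha\leftrightarrow\beta$ exchange, and the $A_H$/$B_H$ pairing with Lemma~\ref{lem.r4azw23} are all handled correctly and reassemble into the stated identity.
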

\subsubsection{Generalizations of Candido's identity}
Taking the imaginary part of~\eqref{eq.eqypotv} on page~\pageref{eq.eqypotv} according to the prescription of~\eqref{eq.brp6mp3} and~\eqref{eq.k32lr6u}, we have
\begin{equation*}
\begin{split}
&2\left( {\beta ^k G_k^3  + \beta ^{k + 1} G_{k + 1}^3  + \beta ^{k + 2} G_{k + 2}^3 } \right)\\
&\qquad = \left( {G_k^2  + G_{k + 1}^2  + G_{k + 2}^2 } \right)\left( {\beta ^k G_k  + \beta ^{k + 1} G_{k + 1}  + \beta ^{k + 2} G_{k + 2} } \right);
\end{split}
\end{equation*}
which, upon multiplication by $\beta^{r - k}$ gives
\begin{equation}\label{eq.kwuc7to}
\begin{split}
&2\left( {\beta ^r G_k^3  + \beta ^{r + 1} G_{k + 1}^3  + \beta ^{r + 2} G_{k + 2}^3 } \right)\\
&\qquad = \left( {G_k^2  + G_{k + 1}^2  + G_{k + 2}^2 } \right)\left( {\beta ^r G_k  + \beta ^{r + 1} G_{k + 1}  + \beta ^{r + 2} G_{k + 2} } \right);
\end{split}
\end{equation}
which, on account of Proposition~\ref{prop.invariant}, also implies
\begin{equation}\label{eq.w6hqycw}
\begin{split}
&2\left( {\alpha ^r G_k^3  + \alpha ^{r + 1} G_{k + 1}^3  + \alpha ^{r + 2} G_{k + 2}^3 } \right)\\
&\qquad = \left( {G_k^2  + G_{k + 1}^2  + G_{k + 2}^2 } \right)\left( {\alpha ^r G_k  + \alpha ^{r + 1} G_{k + 1}  + \alpha ^{r + 2} G_{k + 2} } \right).
\end{split}
\end{equation}
Combining according to the Binet formula gives the following generalization of Candido's identity.
\begin{proposition}
If $r$ and $k$ are any integers, then
\begin{equation}\label{eq.y80lcc4}
\begin{split}
&2\left( {H_rG_k^3  + H_{r + 1} G_{k + 1}^3  + H_{r + 2} G_{k + 2}^3 } \right)\\
&\qquad = \left( {G_k^2  + G_{k + 1}^2  + G_{k + 2}^2 } \right)\left( {H_r G_k  + H_{r + 1} G_{k + 1}  + H_{r + 2} G_{k + 2} } \right).
\end{split}
\end{equation}
\end{proposition}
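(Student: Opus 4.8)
The plan is to apply the method (second component) to the gibonacci form of Candido's identity established above, namely
\begin{equation*}
2\left( G_k^4 + G_{k+1}^4 + G_{k+2}^4 \right) = \left( G_k^2 + G_{k+1}^2 + G_{k+2}^2 \right)^2 .
\end{equation*}
First I would replace each $G_{k+i}$ by $g(k+i)$, with $g$ as in~\eqref{eq.r25jazv}, and differentiate with respect to $k$; this produces exactly~\eqref{eq.eqypotv}. Taking the imaginary part of~\eqref{eq.eqypotv} and using the prescriptions~\eqref{eq.brp6mp3} and~\eqref{eq.k32lr6u} turns each occurrence of $\partial g(k+i)/\partial k$ into $B_G\pi\beta^{k+i}$, while the sum $g(k)^2+g(k+1)^2+g(k+2)^2$, being real at integer argument, passes through as $G_k^2+G_{k+1}^2+G_{k+2}^2$. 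The common factor $B_G\pi$ then cancels from both sides (the differentiated Candido identity being linear of degree one in the derivative factor on each term), leaving
\begin{equation*}
2\left( \beta^k G_k^3 + \beta^{k+1} G_{k+1}^3 + \beta^{k+2} G_{k+2}^3 \right) = \left( G_k^2 + G_{k+1}^2 + G_{k+2}^2 \right)\left( \beta^k G_k + \beta^{k+1} G_{k+1} + \beta^{k+2} G_{k+2} \right) .
\end{equation*}

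Next I would multiply through by $\beta^{r-k}$ for an arbitrary integer $r$, which shifts the powers of $\beta$ onto the free exponent $r$ while leaving the $G$'s indexed by $k$; this is~\eqref{eq.kwuc7to}. Since that identity contains only gibonacci numbers together with $\alpha$ and $\beta$ and no other irrationals, Proposition~\ref{prop.invariant} (invariance under the exchange of $\tau$ and $\sigma$, i.e.\ here of $\alpha$ and $\beta$) yields the companion identity~\eqref{eq.w6hqycw}, obtained by replacing every $\beta$ by $\alpha$. Finally I would form the linear combination obtained by multiplying~\eqref{eq.w6hqycw} by $A_H=(H_1-H_0\beta)/\sqrt5$ and~\eqref{eq.kwuc7to} by $B_H=(H_0\alpha-H_1)/\sqrt5$, the Binet coefficients of $(H_j)$, and adding: the scalar factor $G_k^2+G_{k+1}^2+G_{k+2}^2$ is inert under this combination, and each bracket $A_H\alpha^{r+i}+B_H\beta^{r+i}$ collapses to $H_{r+i}$ by~\eqref{Binet}. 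This is precisely~\eqref{eq.y80lcc4}.

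I do not expect a genuine obstacle; the whole argument follows the standard ``differentiate, take imaginary part, multiply by $\beta^{r-k}$, symmetrize, recombine'' template of this section. The only points needing a moment's care are checking that the imaginary-part prescription is applied consistently to every term of~\eqref{eq.eqypotv}, so that $B_G\pi$ really does cancel and no stray $\sqrt5$ or other irrational survives in~\eqref{eq.kwuc7to}, and confirming that this absence of extra irrationals is exactly what licenses Proposition~\ref{prop.invariant} in passing between~\eqref{eq.kwuc7to} and~\eqref{eq.w6hqycw}. The final recombination, which might look like the substantive step, is merely the observation that the $\alpha$- and $\beta$-identities are the two Binet components of the claimed gibonacci identity, with the $G$-dependent scalar carried along as a constant coefficient.
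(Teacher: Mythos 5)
Your proposal is correct and follows the paper's own proof essentially step for step: differentiate the gibonacci-function form of Candido's identity to get~\eqref{eq.eqypotv}, take the imaginary part via~\eqref{eq.brp6mp3} and~\eqref{eq.k32lr6u} (cancelling the common factor $B_G\pi$), multiply by $\beta^{r-k}$ to obtain~\eqref{eq.kwuc7to}, invoke Proposition~\ref{prop.invariant} for~\eqref{eq.w6hqycw}, and recombine with the Binet coefficients of $(H_j)$. No discrepancies with the paper's argument.
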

By differentiating the gibonacci function form of~\eqref{eq.y80lcc4} three more times with respect to $k$, the reader is invited to demonstrate the further generalization of the Candido identity stated in Proposition~\ref{prop.wzey6tg}.
\begin{proposition}\label{prop.wzey6tg}
If $k$, $r$, $s$ and $t$ are any integers, then
\begin{equation*}
\begin{split}
&6\left( {G_k H_r M_s N_t  + G_{k + 1} H_{r + 1} M_{s + 1} N_{t + 1}  + G_{k + 2} H_{r + 2} M_{s + 2} N_{t + 2} } \right)\\
& = \left( {G_k M_s  + G_{k + 1} M_{s + 1}  + G_{k + 2} M_{s + 2} } \right)\left( {H_r N_t  + H_{r + 1} N_{t + 1}  + H_{r + 2} N_{t + 2} } \right)\\
&\qquad + \left( {G_k H_r  + G_{k + 1} H_{r + 1}  + G_{k + 2} H_{r + 2} } \right)\left( {M_s N_t  + M_{s + 1} N_{t + 1}  + M_{s + 2} N_{t + 2} } \right)\\
&\qquad + \left( {G_k N_t  + G_{k + 1} N_{t + 1}  + G_{k + 2} N_{t + 2} } \right)\left( {H_r M_s  + H_{r + 1} M_{s + 1}  + H_{r + 2} M_{s + 2} } \right),
\end{split}
\end{equation*}
where ${(M_j)}_{j\in\mathbb Z}$ and ${(N_j)}_{j\in\mathbb Z}$ are gibonacci sequences with seeds $M_0$ and $M_1$ and $N_0$ and $N_1$.
\end{proposition}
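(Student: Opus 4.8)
The plan is to run the second-component procedure of \S\ref{sec.additional} twice, starting from the two-sequence Candido identity~\eqref{eq.y80lcc4}; each pass peels off one factor of $G$ and grafts on a fresh gibonacci sequence, in exactly the way~\eqref{eq.y80lcc4} was itself produced from the derivative~\eqref{eq.eqypotv} via~\eqref{eq.kwuc7to}, \eqref{eq.w6hqycw} and the Binet formula.

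\emph{First pass.} Differentiate the gibonacci function form of~\eqref{eq.y80lcc4} with respect to $k$ (the $H_{r+i}$ are constants here) and take imaginary parts. The single derivative in each term always lands on some factor $g(k+i)$, so the prescription~\eqref{eq.brp6mp3}--\eqref{eq.k32lr6u} applies uniformly and the common factor $\pi B_G$ cancels, leaving a relation linear in $\beta^{k},\beta^{k+1},\beta^{k+2}$ with coefficients polynomial in the $G$'s and $H$'s. Multiplying through by $\beta^{s-k}$ turns these into $\beta^{s},\beta^{s+1},\beta^{s+2}$; by Proposition~\ref{prop.invariant} the same relation holds with $\alpha$ in place of $\beta$; and combining the two according to~\eqref{eq.i07nxs0} for a fresh gibonacci sequence $(M_j)$ replaces $\beta^{s+i}$ by $M_{s+i}$, yielding the three-sequence Candido identity
\begin{equation*}
\begin{split}
&6\left( H_r G_k^2 M_s + H_{r+1}G_{k+1}^2 M_{s+1} + H_{r+2}G_{k+2}^2 M_{s+2} \right)\\
&\qquad = 2\left( G_k M_s + G_{k+1}M_{s+1} + G_{k+2}M_{s+2} \right)\left( H_r G_k + H_{r+1}G_{k+1} + H_{r+2}G_{k+2} \right)\\
&\qquad\quad + \left( G_k^2 + G_{k+1}^2 + G_{k+2}^2 \right)\left( H_r M_s + H_{r+1}M_{s+1} + H_{r+2}M_{s+2} \right),
\end{split}
\end{equation*}
where the $6$ on the left is $2\cdot 3$, the $3$ coming from $\frac{d}{dk}g(k)^3 = 3g(k)^2 g'(k)$.

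\emph{Second pass.} Apply the same five moves to this identity with respect to $k$, now with a new index $t$ and a fresh gibonacci sequence $(N_j)$. On the left the remaining square $g(k+i)^2$ differentiates to $2g(k+i)g'(k+i)$; on the right the product rule turns the first product into two terms and the second into one more (its factor $H_r M_s+\cdots$ being constant in $k$), so three terms survive. Taking imaginary parts via~\eqref{eq.brp6mp3}--\eqref{eq.k32lr6u}, dividing by the common factor $2\pi B_G$, multiplying by $\beta^{t-k}$, invoking Proposition~\ref{prop.invariant} and recombining with~\eqref{eq.i07nxs0} for $(N_j)$ replaces $\beta^{t+i}$ by $N_{t+i}$ and delivers exactly the stated identity, the left coefficient returning to $6$ after the division by $2$.

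\emph{Main obstacle.} Conceptually nothing new is needed; the only place requiring care is the second pass: correctly applying the product rule to the two-summand right-hand side, identifying in each resulting term which factor is real at integer argument (the one not carrying the lone $g'$) so that~\eqref{eq.brp6mp3} may be invoked, and then checking that the three surviving terms reassemble into the symmetric pattern $(GM)(HN)+(GH)(MN)+(GN)(HM)$ with the stated coefficients. Degeneracy is harmless: $B_G=(G_0\alpha-G_1)/\sqrt5\neq0$ for any admissible seeds, and in any case the final relation is a polynomial identity in the eight seeds, so it suffices to verify it where every relevant $B$ is nonzero.
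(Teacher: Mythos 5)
Your proposal is correct and is essentially the paper's own argument: the paper establishes Proposition~\ref{prop.wzey6tg} only by inviting the reader to iterate the second component on the gibonacci function form of~\eqref{eq.y80lcc4}, and your two passes (differentiate with respect to $k$, take imaginary parts via~\eqref{eq.brp6mp3}--\eqref{eq.k32lr6u}, cancel the common factor $B_G\pi$, shift the exponent by $\beta^{s-k}$ resp.\ $\beta^{t-k}$, invoke Proposition~\ref{prop.invariant}, and recombine by the Binet form~\eqref{eq.i07nxs0}) carry that out with the correct coefficient bookkeeping ($6=2\cdot 3$ after the first pass, $12/2=6$ after the second). The only discrepancy is with the paper's phrase ``three more times'': as your count shows, two further differentiations suffice to pass from the two-sequence identity~\eqref{eq.y80lcc4} to the four-sequence statement.
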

In particular, we have
\begin{equation*}
\begin{split}
&6\left( {F_k F_r F_s F_t  + F_{k + 1} F_{r + 1} F_{s + 1} F_{t + 1}  + F_{k + 2} F_{r + 2} F_{s + 2} F_{t + 2} } \right)\\
& = \left( {F_k F_s  + F_{k + 1} F_{s + 1}  + F_{k + 2} F_{s + 2} } \right)\left( {F_r F_t  + F_{r + 1} F_{t + 1}  + F_{r + 2} F_{t + 2} } \right)\\
&\qquad + \left( {F_k F_r  + F_{k + 1} F_{r + 1}  + F_{k + 2} F_{r + 2} } \right)\left( {F_s F_t  + F_{s + 1} F_{t + 1}  + F_{s + 2} F_{t + 2} } \right)\\
&\qquad + \left( {F_k F_t  + F_{k + 1} F_{t + 1}  + F_{k + 2} F_{t + 2} } \right)\left( {F_r F_s  + F_{r + 1} F_{s + 1}  + F_{r + 2} F_{s + 2} } \right).
\end{split}
\end{equation*}
\subsection{Generalizations of a Lucas number identity}
Differentiating the Lucas function:
\begin{equation*}
l(2r) + 2(-1)^r=l(r)^2,
\end{equation*}
of the well-known identity:
\begin{equation*}
L_{2r} + 2(-1)^r = L_r^2,
\end{equation*}
gives
\begin{equation*}
\frac{d}{{dr}}l(2r) + ( - 1)^r i\pi  = l(r)\frac{d}{{dr}}l(r),
\end{equation*}
which, employing~\eqref{eq.p4bbs37} and~\eqref{eq.b00ifyn}, yields
\begin{equation*}
\beta^{2r} + (-1)^r =\beta^rL_r,
\end{equation*}
or multiplying through by $\beta^{s - r}$,
\begin{equation}\label{eq.x5gev45}
\beta^{s + r} + (-1)^r\beta^ {s - r} =\beta^sL_r,
\end{equation}
and also
\begin{equation}\label{eq.vlq8mj5}
\alpha^{s + r} + (-1)^r\alpha^ {s - r} =\alpha^sL_r,
\end{equation}
Combining~\eqref{eq.x5gev45} and~\eqref{eq.vlq8mj5} according to the Binet formula gives the following multiplication formula (also Vajda~\cite[Formula (10a)]{vajda}).
\begin{proposition}
If $r$ and $s$ are any integers, then
\begin{equation*}
G_{s + r} + (-1)^rG_{s - r}=L_rG_s.
\end{equation*}
\end{proposition}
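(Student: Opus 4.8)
The plan is to obtain this multiplication formula by applying the second component of the method to the well-known Lucas identity $L_{2r}+2(-1)^r=L_r^2$. First I would replace the Lucas numbers by Lucas functions, writing $l(2r)+2(-1)^r=l(r)^2$, and then differentiate with respect to $r$. The only non-obvious point here is that $(-1)^r$ must be read as $e^{i\pi r}$, so that differentiating the constant-looking term $2(-1)^r$ produces the genuinely imaginary contribution $2i\pi(-1)^r$; after cancelling the common factor of~$2$ one is left with $\tfrac{d}{dr}l(2r)+i\pi(-1)^r=l(r)\tfrac{d}{dr}l(r)$.

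Next I would take the imaginary part of this equation, using the prescriptions \eqref{eq.p4bbs37} and \eqref{eq.b00ifyn} on the first term and on the right-hand side, together with $\Im\big(i\pi(-1)^r\big)=\pi(-1)^r$ on the middle term. Cancelling $\pi$ yields the compact power identity $\beta^{2r}+(-1)^r=\beta^rL_r$, valid for every integer $r$. Multiplying through by $\beta^{\,s-r}$, where $s$ is now an arbitrary integer, converts this into \eqref{eq.x5gev45}, namely $\beta^{\,s+r}+(-1)^r\beta^{\,s-r}=\beta^sL_r$.

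Since this last identity contains only the irrational $\beta$ (and $L_r$, which is itself invariant under the exchange of $\alpha$ and $\beta$), Proposition~\ref{prop.invariant} applies and delivers the twin identity \eqref{eq.vlq8mj5}, $\alpha^{\,s+r}+(-1)^r\alpha^{\,s-r}=\alpha^sL_r$. To finish, I would multiply \eqref{eq.vlq8mj5} by $A_G=(G_1-G_0\beta)/\sqrt5$, multiply \eqref{eq.x5gev45} by $B_G=(G_0\alpha-G_1)/\sqrt5$ (the coefficients appearing in \eqref{eq.zumcdp3}), and add the two results: by the Binet representation \eqref{eq.i07nxs0}--\eqref{eq.r25jazv} the three bracketed combinations of powers collapse to $G_{s+r}$, $G_{s-r}$ and $G_s$ respectively, and the claimed identity $G_{s+r}+(-1)^rG_{s-r}=L_rG_s$ drops out.

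I do not expect a serious obstacle here: once the justification of Section~\ref{sec.justification} and Proposition~\ref{prop.invariant} are in hand, every step is a short routine manipulation. The two places that call for a little care are the differentiation of $(-1)^r$ (keeping track of the factor $i\pi$ and remembering that only its imaginary part is retained after taking $\Im$) and the verification that \eqref{eq.x5gev45} really involves no irrational beyond $\alpha$ and $\beta$, which is precisely what licenses the application of Proposition~\ref{prop.invariant}.
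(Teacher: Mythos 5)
Your proposal is correct and follows the paper's own derivation essentially verbatim: differentiate the Lucas-function form of $L_{2r}+2(-1)^r=L_r^2$, take imaginary parts via \eqref{eq.p4bbs37} and \eqref{eq.b00ifyn} to get $\beta^{2r}+(-1)^r=\beta^r L_r$, shift by $\beta^{s-r}$, invoke Proposition~\ref{prop.invariant} for the $\alpha$-twin, and combine via the Binet coefficients $A_G$, $B_G$. The only difference is that you spell out the final Binet combination explicitly where the paper just says ``combining according to the Binet formula.''
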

Differentiating the function
\begin{equation*}
g(s + r) + ( - 1)^r g(s - r) = l(r)g(s),
\end{equation*}
with respect to $r$ gives
\begin{equation*}
\begin{split}
&\frac{\partial }{{\partial r}}g(s + r) + ( - 1)^r \pi ig(s - r) - ( - 1)^r \frac{\partial }{{\partial r}}g(s - r)\\
&\qquad= g(s)\frac{d}{{dr}}l(r),
\end{split}
\end{equation*}
and taking the imaginary part, making use of~\eqref{eq.b00ifyn},~\eqref{eq.brp6mp3} and~\eqref{eq.k32lr6u}, we find
\begin{equation*}
\begin{split}
&G_0 \left( {\beta ^{k + r + s - 1}  - ( - 1)^r \beta ^{k + s - r - 1} } \right) + G_1 \left( {\beta ^{k + s - 1 + r}  - ( - 1)^r \beta ^{k + s - 1 - r} } \right)\\
&\qquad = ( - 1)^r \beta ^k G_{s - r} \sqrt 5  - \beta ^{r + k} G_s \sqrt 5 ,
\end{split}
\end{equation*}
and also
\begin{equation*}
\begin{split}
&G_0 \left( {\alpha ^{k + r + s - 1}  - ( - 1)^r \alpha ^{k + s - r - 1} } \right) + G_1 \left( {\alpha ^{k + s - 1 + r}  - ( - 1)^r \alpha ^{k + s - 1 - r} } \right)\\
&\qquad = ( - 1)^{r - 1} \alpha ^k G_{s - r} \sqrt 5  + \alpha ^{r + k} G_s \sqrt 5, 
\end{split}
\end{equation*}
and hence the next result.
\begin{proposition}
If $r$, $k$ and $s$ are integers, then
\begin{equation*}
\begin{split}
&G_s \left( {H_{r + k + 1}  + H_{r + k - 1} } \right) + ( - 1)^{r - 1} G_{s - r} \left( {H_{k + 1}  + H_{k - 1} } \right)\\
&\qquad =G_0 F_r \left( {H_{k + s}  + H_{k + s - 2} } \right) + G_1 F_r \left( {H_{k + s + 1}  + H_{k + s - 1} } \right).
\end{split}
\end{equation*}
\end{proposition}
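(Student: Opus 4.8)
The plan is to apply the second-component method to the multiplication formula $G_{s+r}+(-1)^r G_{s-r}=L_r G_s$ established in the preceding proposition, differentiating with respect to $r$, and then to reinterpret the resulting power identity through the Binet formula so as to introduce an arbitrary companion sequence $(H_j)$. First I would pass to the function form $g(s+r)+(-1)^r g(s-r)=l(r)g(s)$ and differentiate it in $r$; the factor $(-1)^r=e^{i\pi r}$ contributes a term $(-1)^r i\pi\,g(s-r)$, and the argument $s-r$ of the middle term introduces a sign from the chain rule. Taking imaginary parts and invoking \eqref{eq.b00ifyn}, \eqref{eq.brp6mp3} and \eqref{eq.k32lr6u} — so that $\Im\frac{d}{dr}l(r)=\pi\beta^r$, $\Im\bigl(g(s-r)Y\bigr)=G_{s-r}\Im Y$, and $\Im\frac{\partial}{\partial r}g(\cdot)=B_G\pi\beta^{(\cdot)}$ with the chain-rule sign — then substituting $B_G=(G_0\alpha-G_1)/\sqrt5$, clearing $\pi$ and $\sqrt5$, collapsing the products $\alpha\beta^{m}=-\beta^{m-1}$, and finally multiplying through by $\beta^{k}$ to bring in the third free index, I obtain an identity in powers of $\beta$ of the shape
\begin{equation*}
G_0\bigl(\beta^{k+r+s-1}-(-1)^r\beta^{k+s-r-1}\bigr)+G_1\bigl(\beta^{k+s+r}-(-1)^r\beta^{k+s-r}\bigr)=\sqrt5\,\bigl((-1)^r\beta^{k}G_{s-r}-\beta^{k+r}G_s\bigr).
\end{equation*}
By Proposition~\ref{prop.invariant} the same relation holds with $\alpha$ replacing $\beta$ throughout, the handling of $\alpha\beta=-1$ flipping the signs of the two terms on the right.

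Next I would form $B_H\cdot(\beta\text{-identity})+A_H\cdot(\alpha\text{-identity})$, with $A_H,B_H$ the Binet coefficients of $(H_j)$. On the left each pair $A_H\alpha^{m}+B_H\beta^{m}$ collapses to $H_m$, so the left side becomes $G_0\bigl(H_{k+r+s-1}-(-1)^r H_{k+s-r-1}\bigr)+G_1\bigl(H_{k+s+r}-(-1)^r H_{k+s-r}\bigr)$; on the right, Lemma~\ref{lem.r4azw23} converts each opposite-sign pair $A_H\alpha^{m}-B_H\beta^{m}$ into $(H_{m+1}+H_{m-1})/\sqrt5$, the $\sqrt5$ cancels, and the right side becomes $G_s(H_{k+r+1}+H_{k+r-1})+(-1)^{r-1}G_{s-r}(H_{k+1}+H_{k-1})$. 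It then remains to rewrite the left side: applying the elementary multiplication-type identity $H_{j+r}-(-1)^r H_{j-r}=F_r(H_{j+1}+H_{j-1})$ — itself immediate from Binet and $\alpha\beta=-1$ — at $j=k+s-1$ and at $j=k+s$ turns it into $F_r\bigl(G_0(H_{k+s}+H_{k+s-2})+G_1(H_{k+s+1}+H_{k+s-1})\bigr)$, which is the asserted identity.

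I expect the only real difficulty to be clerical: carrying the signs and index shifts correctly through the differentiation of $(-1)^r$, the chain rule on the decreasing argument $s-r$, and the $\alpha\beta=-1$ reductions, and in particular reading off the companion $\alpha$-identity so that exactly the intended pairs combine into plain $H_m$ on one side and into $(H_{m+1}+H_{m-1})$ on the other. No new idea beyond the last-step identity $H_{j+r}-(-1)^r H_{j-r}=F_r(H_{j+1}+H_{j-1})$ is needed.
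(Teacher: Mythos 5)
Your proposal is correct and follows essentially the same route as the paper: differentiate the gibonacci-function form of $G_{s+r}+(-1)^rG_{s-r}=L_rG_s$ with respect to $r$, take imaginary parts via~\eqref{eq.b00ifyn}, \eqref{eq.brp6mp3} and~\eqref{eq.k32lr6u}, pair the resulting $\beta$- and $\alpha$-identities through the Binet coefficients of $(H_j)$, and finish with $H_{j+r}-(-1)^rH_{j-r}=F_r\left(H_{j+1}+H_{j-1}\right)$, exactly as the paper does (citing Vajda's formula (10b)). Your intermediate power identity even carries the correct exponents $k+s\pm r$ on the $G_1$ terms, where the paper's displayed version appears to contain a typographical $k+s-1\pm r$.
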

Note that we used
\begin{equation*}
G_{s + r}  - ( - 1)^r G_{s - r}  = F_r \left( {G_{s + 1}  + G_{s - 1} } \right),\quad\text{\cite[Formula (10b)]{vajda}}.
\end{equation*}
Note also that $(H_j)=(L_j)$ in the proposition gives the Howard identity~\eqref{eq.e715lgu} while \mbox{$(H_j)=(F_j)$} produces~\eqref{eq.nh7xbr5}.
\begin{remark}
It should be noted that in order that the results obtained from taking the imaginary part be valid, it is necessary that the (generalized) Fibonacci function identity obtained from the original (generalized) Fibonacci number identity holds for all real $x$; not just integers. This point is taken into consideration in the examples presented in \S~\ref{sec.b1ek8kn} to~\ref{sec.n3q9dr5}.
\end{remark}

\subsubsection{A generalization of the Gelin-Ces\`aro identity}\label{sec.b1ek8kn}
Since
\begin{equation}
G_{k - 1} G_{k + 1}  = G_k^2  - ( - 1)^k e_G,\quad\text{\cite[Identity 28]{vajda}},
\end{equation}
where $e_G=G_0^2 - G_1^2 +G_1G_0$, and
\begin{equation*}
\begin{split}
G_{k - 2} G_{k + 2}  &= \left( {G_k  - G_{k - 1} } \right)\left( {G_k  + G_{k + 1} } \right)\\
& = G_k^2  + (G_k G_{k + 1}  - G_{k - 1} G_k ) - G_{k - 1} G_{k + 1} \\
& = G_k^2  + G_k^2  - G_{k - 1} G_{k + 1} \\
& = G_k^2  + G_k^2  - \left( {G_k^2  - ( - 1)^k e_G } \right)\\
& = G_k^2  + ( - 1)^k e_G ;
\end{split}
\end{equation*}
we have the following generalization of the Gelin-Ces\`aro identity:
\begin{equation*}
G_{k - 2} G_{k - 1} G_{k + 1} G_{k + 2}  = G_k^4  - ( - 1)^{2k} e_G^2, 
\end{equation*}
where we have retained $( - 1)^{2k}$ to allow a direct conversion to the gibonacci function form which is required to hold for all real numbers $k$, namely,
\begin{equation}\label{eq.r05dxhq}
g(k - 2) g(k - 1) g(k + 1)g(k + 2)  = g(k)^4  - ( - 1)^{2k} e_G^2 .
\end{equation}
Differentiating~\eqref{eq.r05dxhq} gives
\begin{equation*}
\begin{split}
&\frac{d}{{dk}}g(k - 2)g(k - 1)g(k + 1)g(k + 2)\\
&\quad + g(k - 2)\frac{d}{{dk}}g(k - 1)g(k + 1)g(k + 2)\\
&\quad\, + g(k - 2)g(k - 1)\frac{d}{{dk}}g(k + 1)g(k + 2)\\
&\quad\; + g(k - 2)g(k - 1)g(k + 1)\frac{d}{{dk}}g(k + 2)\\
&\quad\;\, = 4g(k)^3 \frac{d}{{dk}}g(k) - 2i( - 1)^{2k} \pi e_G^2 ;
\end{split}
\end{equation*}
so that taking the imaginary part, we have
\begin{equation*}
\begin{split}
&B_G \beta ^{k - 2} G_{k - 1} G_{k + 1} G_{k + 2}  + G_{k - 2} B_G \beta ^{k - 1} G_{k + 1} G_{k + 2}\\ 
&\qquad + G_{k - 2} G_{k - 1} B_G \beta ^{k + 1} G_{k + 2}  + G_{k - 2} G_{k - 1} G_{k + 1} B_G \beta ^{k + 2} \\
& = 4G_k^3 B_G \beta ^k  - 2( - 1)^{2k} e_G^2; 
\end{split}
\end{equation*}
and substituting $(G_0\alpha - G_1)/\sqrt 5$ for $B_G$ from~\eqref{eq.k32lr6u} and multiplying through by $\beta^r$ yields
\begin{equation}\label{eq.c4fuwto}
\begin{split}
&\beta ^{k + r - 2} G_{k - 1} G_{k + 1} G_{k + 2}  + G_{k - 2} \beta ^{k + r - 1} G_{k + 1} G_{k + 2} \\
&\quad + G_{k - 2} G_{k - 1} \beta ^{k + r + 1} G_{k + 2}  + G_{k - 2} G_{k - 1} G_{k + 1} \beta ^{k + r + 2} \\
& = 4G_k^3 \beta ^{k + r}  + 2\left( {\beta ^{r + 1} G_0  - \beta ^r G_1 } \right)e_G \sqrt 5,
\end{split}
\end{equation}
which also implies
\begin{equation}\label{eq.r12zjzv}
\begin{split}
&\alpha ^{k + r - 2} G_{k - 1} G_{k + 1} G_{k + 2}  + G_{k - 2} \alpha ^{k + r - 1} G_{k + 1} G_{k + 2} \\
&\quad + G_{k - 2} G_{k - 1} \alpha ^{k + r + 1} G_{k + 2}  + G_{k - 2} G_{k - 1} G_{k + 1} \alpha ^{k + r + 2} \\
& = 4G_k^3 \alpha ^{k + r}  - 2\left( {\alpha ^{r + 1} G_0  - \alpha ^r G_1 } \right)e_G \sqrt 5 .
\end{split}
\end{equation}
Note that we used
\begin{equation*}
\frac{1}{{B_G }} = \frac{{\sqrt 5 }}{{G_0 \alpha  - G_1 }} = \frac{{\sqrt 5 \left( {G_0 \beta  - G_1 } \right)}}{{ - e_G }}.
\end{equation*}
Combining~\eqref{eq.c4fuwto} and~\eqref{eq.r12zjzv}, we arrive at the generalization of the Gelin-Ces\`aro identity stated in Proposition~\ref{prop.om4cjop}.
\begin{proposition}\label{prop.om4cjop}
If $k$ and $r$ are any integers, then
\begin{equation*}
\begin{split}
&H_{k + r - 2} G_{k - 1} G_{k + 1} G_{k + 2}  + G_{k - 2} H_{k + r - 1} G_{k + 1} G_{k + 2} \\
&\quad + G_{k - 2} G_{k - 1} H_{k + r + 1} G_{k + 2}  + G_{k - 2} G_{k - 1} G_{k + 1} H_{k + r + 2} \\
& = 4H_{k + r} G_k^3  - 2e_G G_0 \left( {H_{r + 2}  + H_r } \right) + 2e_G G_1 \left( {H_{r + 1}  + H_{r - 1} } \right).
\end{split}
\end{equation*}
\end{proposition}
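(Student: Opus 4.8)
The plan is to apply the method (second component) to the gibonacci form of the Gelin-Ces\`aro identity and then use Proposition~\ref{prop.invariant} to bootstrap the resulting $\beta$-identity into a full $H$-identity.

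First I would use the two preliminary computations carried out just above the statement, namely $G_{k - 1} G_{k + 1} = G_k^2 - (-1)^k e_G$ and $G_{k - 2} G_{k + 2} = G_k^2 + (-1)^k e_G$, to conclude that $G_{k - 2} G_{k - 1} G_{k + 1} G_{k + 2} = G_k^4 - (-1)^{2k} e_G^2$, the exponent $2k$ being kept deliberately so that passing to the gibonacci function identity~\eqref{eq.r05dxhq}, which must hold for \emph{all} real $k$, is legitimate. Next I would differentiate~\eqref{eq.r05dxhq} with respect to $k$: the left side produces, by the product rule, the four-term sum in which one factor is differentiated in each term, while the right side produces $4g(k)^3\tfrac{d}{dk}g(k)$ together with the term $-2i(-1)^{2k}\pi e_G^2$ coming from $\tfrac{d}{dk}(-1)^{2k} = 2i\pi(-1)^{2k}$. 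Taking the imaginary part and invoking~\eqref{eq.brp6mp3} and~\eqref{eq.k32lr6u} replaces each differentiated gibonacci factor by $B_G\beta^{(\cdot)}$ times the undifferentiated product and turns $i$ into $1$; cancelling the common factor $B_G$ (using $1/B_G = \sqrt 5(G_0\beta - G_1)/(-e_G)$, which follows from $(G_0\alpha - G_1)(G_0\beta - G_1) = -e_G$) and multiplying through by $\beta^r$ yields~\eqref{eq.c4fuwto}.

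By Proposition~\ref{prop.invariant}, identity~\eqref{eq.c4fuwto} involves only gibonacci numbers together with $\alpha$ and $\beta$, hence remains valid under the exchange $\alpha\leftrightarrow\beta$; this is precisely~\eqref{eq.r12zjzv}. The final step is to combine the two. Writing $A_H = (H_1 - H_0\beta)/\sqrt 5$ and $B_H = (H_0\alpha - H_1)/\sqrt 5$, so that $H_j = A_H\alpha^j + B_H\beta^j$, I would form $A_H$ times~\eqref{eq.r12zjzv} plus $B_H$ times~\eqref{eq.c4fuwto}. In each term of the left side and in the $4G_k^3$ term on the right, the pair $A_H\alpha^{(\cdot)} + B_H\beta^{(\cdot)}$ reassembles immediately into the corresponding $H$-subscripted number. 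The only delicate point is the $e_G\sqrt 5$ term, which comes out as $2e_G\sqrt 5\bigl(G_0(B_H\beta^{r + 1} - A_H\alpha^{r + 1}) + G_1(A_H\alpha^r - B_H\beta^r)\bigr)$: here the combinations $A_H\alpha^j - B_H\beta^j$ are \emph{not} $H_j$ but rather $(H_{j + 1} + H_{j - 1})/\sqrt 5$, by Lemma~\ref{lem.r4azw23} applied to $(H_j)$. Substituting this collapses the term to $-2e_G G_0(H_{r + 2} + H_r) + 2e_G G_1(H_{r + 1} + H_{r - 1})$, which is exactly the right side of the stated identity.

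The main obstacle is this last bookkeeping step: one must resist reading $A_H\alpha^j - B_H\beta^j$ as $H_j$ and instead recognize the conjugate-difference combination governed by Lemma~\ref{lem.r4azw23}; once that substitution is made, everything else is routine algebra. A secondary point worth verifying is that~\eqref{eq.r05dxhq} genuinely holds for all real $k$, not merely for integers, since this is what licenses the differentiation and the subsequent extraction of the imaginary part.
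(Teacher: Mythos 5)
Your proposal is correct and follows essentially the same route as the paper: retain $(-1)^{2k}$ so that the gibonacci function form \eqref{eq.r05dxhq} holds for all real $k$, differentiate, take imaginary parts via \eqref{eq.brp6mp3} and \eqref{eq.k32lr6u}, divide out $B_G$ using $(G_0\alpha - G_1)(G_0\beta - G_1) = -e_G$, multiply by $\beta^r$, swap $\alpha\leftrightarrow\beta$ by Proposition~\ref{prop.invariant}, and recombine. Your explicit treatment of the $e_G\sqrt5$ term via Lemma~\ref{lem.r4azw23} applied to $(H_j)$ is exactly the bookkeeping the paper leaves implicit in its final ``combining'' step, and it checks out.
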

\subsubsection{Generalizations of Catalan's identity}
Upon differentiating the Fibonacci function form:
\begin{equation*}
f(k - r)f(k + r) = f(k)^2  + ( - 1)^{k + r + 1} f(r)^2,
\end{equation*}
of Catalan's identity:
\begin{equation*}
F_{k - r} F_{k + r}  = F_k^2  + ( - 1)^{k + r + 1} F_r^2,
\end{equation*}
with respect to $k$ and applying the prescription~\eqref{eq.mbcae9n} and~\eqref{eq.bf3c8fj}, we obtain
\begin{equation}\label{eq.o8e2j38}
F_{k + r} \beta ^{k - r + s}  + F_{k - r} \beta ^{k + r + s}  = 2F_k \beta ^{k + s}  + ( - 1)^{k + r} \beta ^s F_r^2 \sqrt 5,
\end{equation} 
and hence also
\begin{equation}\label{eq.d6vmxww}
F_{k + r} \alpha ^{k - r + s}  + F_{k - r} \alpha ^{k + r + s}  = 2F_k \alpha ^{k + s}  - ( - 1)^{k + r} \alpha ^s F_r^2 \sqrt 5.
\end{equation}
Combining~\eqref{eq.o8e2j38} and~\eqref{eq.d6vmxww} according to the Binet formula gives the following generalization of Catalan's identity.
\begin{proposition}
If $k$, $r$ and $s$ are any integers, then
\begin{equation}\label{eq.ghaetrs}
F_{k + r} G_{k - r + s}  + F_{k - r} G_{k + r + s}  = 2F_k G_{k + s}  + ( - 1)^{k + r + 1} F_r^2 \left( {G_{s + 1}  + G_{s - 1} } \right).
\end{equation}
\end{proposition}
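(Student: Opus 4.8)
The plan is to run the second-component procedure on Catalan's identity with $k$ as the free index, exactly along the lines of the display preceding the statement. First I would write the Fibonacci function form $f(k-r)f(k+r) = f(k)^2 + (-1)^{k+r+1}f(r)^2$ and verify that it holds for \emph{all} real $k$, not merely integer $k$, since the imaginary-part step below is legitimate only then; this follows from the Binet form of $f$ together with $\alpha\beta=-1$ and the elementary fact $L_{2r}-5F_r^2 = 2(-1)^r$. Then I would differentiate with respect to $k$ by the product rule, noting that the factor $(-1)^{k+r+1}=e^{i\pi(k+r+1)}$ contributes the extra term $i\pi(-1)^{k+r+1}f(r)^2$ on the right, while $f(r)^2$ itself is $k$-independent.

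Next I would take the imaginary part and apply the prescriptions \eqref{eq.mbcae9n} and \eqref{eq.bf3c8fj}: each product $f(h(k))\,Y$ becomes $F_{h(k)}\,\Im Y$, each $\Im\frac{\partial}{\partial k}f(h(k))$ becomes $-\beta^{h(k)}\pi/\sqrt5$, and the $i\pi$ term becomes $\pi(-1)^{k+r+1}F_r^2$. Clearing the common factor $\pi/\sqrt5$ and multiplying through by an arbitrary power $\beta^s$ then yields \eqref{eq.o8e2j38}. Since \eqref{eq.o8e2j38} involves only Fibonacci numbers together with $\alpha$, $\beta$ and $\sqrt5=\alpha-\beta$ (which merely changes sign under $\alpha\leftrightarrow\beta$), Proposition~\ref{prop.invariant} immediately gives the companion relation \eqref{eq.d6vmxww}.

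Finally I would combine the two relations. Writing $G_j = A_G\alpha^j + B_G\beta^j$ with $A_G,B_G$ as in \eqref{eq.zumcdp3}, I would form $A_G$ times \eqref{eq.d6vmxww} plus $B_G$ times \eqref{eq.o8e2j38}. On the left the coefficients $F_{k+r}$ and $F_{k-r}$ reassemble the $\alpha$- and $\beta$-powers into $G_{k-r+s}$ and $G_{k+r+s}$; the term $2F_k(\cdots)$ becomes $2F_kG_{k+s}$; and the $F_r^2$ term becomes $-(-1)^{k+r}\sqrt5\,F_r^2\,(A_G\alpha^s - B_G\beta^s)$. Applying the gibonacci analogue of Lemma~\ref{lem.r4azw23}, that is \eqref{eq.dnu2vbt} with $\Delta=\sqrt5$, namely $A_G\alpha^s - B_G\beta^s = (G_{s+1}+G_{s-1})/\sqrt5$, collapses this last term to $(-1)^{k+r+1}F_r^2(G_{s+1}+G_{s-1})$, which is exactly the right-hand side of \eqref{eq.ghaetrs}.

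The computation is essentially routine once the setup is fixed; the one point that genuinely needs attention is the observation that the Fibonacci function form of Catalan's identity is valid for all real $k$, which is what legitimizes both the differentiation in $k$ and the subsequent extraction of the imaginary part. The only other place to be careful is sign bookkeeping — tracking $(-1)^{k+r}$ versus $(-1)^{k+r+1}$ and the $\sqrt5\mapsto-\sqrt5$ flip when invoking Proposition~\ref{prop.invariant} — but this is mechanical.
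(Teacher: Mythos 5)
Your proposal is correct and follows essentially the same route as the paper: differentiate the Fibonacci function form of Catalan's identity with respect to $k$, take imaginary parts via \eqref{eq.mbcae9n} and \eqref{eq.bf3c8fj} to get \eqref{eq.o8e2j38}, apply Proposition~\ref{prop.invariant} for \eqref{eq.d6vmxww}, and combine via the Binet coefficients using $A_G\alpha^s - B_G\beta^s = (G_{s+1}+G_{s-1})/\sqrt 5$. Your explicit check that the function form holds for all real $k$ is a point the paper only flags in its preceding remark, so including it is a welcome touch rather than a deviation.
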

Writing
\begin{equation}\label{eq.esegafn}
F_{k - s + r} G_{k - r}  + F_{k - s - r} G_{k + r}  = 2F_{k - s} G_k  + ( - 1)^{k - s + r + 1} F_r^2 \left( {G_{s + 1}  + G_{s - 1} } \right),
\end{equation}
and setting $k=s$ gives the multiplication formula
\begin{equation}\label{eq.rel90s5}
G_{s + r}  - ( - 1)^r G_{s - r}  = F_r \left( {G_{s + 1}  + G_{s - 1} } \right),
\end{equation}
derived also by Vajda~\cite[Formula (10b)]{vajda}.

\bigskip

Applying the method (second component) to~\eqref{eq.rel90s5} with $r$ as the index of interest gives the next result.
\begin{proposition}
If $r$, $s$ and $t$ are any integers, then
\begin{equation}
\begin{split}
&L_r\left(G_0 H_{s + t - 1}  + G_1 H_{s + t}\right)  - ( - 1)^r G_{s - r} \left( {H_{t + 1}  + H_{t - 1} } \right)\\
&\qquad = H_{r + t} \left( {G_{s + 1}  + G_{s - 1} } \right).
\end{split}
\end{equation}
\end{proposition}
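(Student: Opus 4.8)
The plan is to run the second component of the method on identity~\eqref{eq.rel90s5} with $r$ as the free index. First I pass to the gibonacci-function form
\begin{equation*}
g(s+r) - (-1)^r g(s-r) = f(r)\bigl(g(s+1) + g(s-1)\bigr),
\end{equation*}
reading $(-1)^r$ as $e^{i\pi r}$ so that both sides are infinitely differentiable functions of the real variable $r$. As the Remark preceding~\S\ref{sec.b1ek8kn} requires, one checks directly from the Binet forms that this identity is valid for \emph{all} real $r$: using $\beta^x = \alpha^{-x}e^{i\pi x}$ and $g(s+1)+g(s-1) = \sqrt 5\,(A_G\alpha^s - B_G\beta^s)$, both sides collapse to $(\alpha^r-\beta^r)(A_G\alpha^s - B_G\beta^s)$. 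Differentiating with respect to $r$ (so the term $\frac{d}{dr}(-1)^r = i\pi(-1)^r$ appears) and taking imaginary parts by means of~\eqref{eq.brp6mp3},~\eqref{eq.bf3c8fj} and~\eqref{eq.k32lr6u} gives, after dividing by $\pi$, the $\beta$-relation
\begin{equation*}
B_G\beta^{s+r} - (-1)^r G_{s-r} + (-1)^r B_G\beta^{s-r} = -\bigl(G_{s+1}+G_{s-1}\bigr)\beta^r/\sqrt 5 . \tag{$\star$}
\end{equation*}

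Next I would recast~($\star$) into the form required by Proposition~\ref{prop.invariant} and by a Binet combination, namely a relation among pure powers of $\beta$ whose coefficients are products of gibonacci numbers and a sign $(-1)^r$ only. Multiplying through by $-\sqrt 5$, using $-\sqrt 5\,B_G = G_1 - G_0\alpha$ together with $\alpha\beta^m = -\beta^{m-1}$ to replace each $B_G\beta^m$ by $G_1\beta^m + G_0\beta^{m-1}$, then multiplying by an arbitrary power $\beta^t$ and using $\sqrt 5\,\beta^t = -(\beta^{t+1}+\beta^{t-1})$ to clear the residual $\sqrt 5$, produces precisely such a relation: on the left $G_1\beta^{s+r+t}+G_0\beta^{s+r+t-1}$, $(-1)^r\bigl(G_1\beta^{s-r+t}+G_0\beta^{s-r+t-1}\bigr)$ and $-(-1)^r G_{s-r}\bigl(\beta^{t+1}+\beta^{t-1}\bigr)$, and on the right $\bigl(G_{s+1}+G_{s-1}\bigr)\beta^{r+t}$.

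By Proposition~\ref{prop.invariant} the same relation holds with $\beta$ replaced by $\alpha$ throughout, every coefficient being $\alpha\leftrightarrow\beta$ invariant. Taking $A_H$ times the $\alpha$-relation plus $B_H$ times the $\beta$-relation and using $A_H\alpha^m + B_H\beta^m = H_m$ replaces each pair $\alpha^m,\beta^m$ by $H_m$, yielding
\begin{equation*}
\begin{split}
&G_1 H_{s+r+t}+G_0 H_{s+r+t-1}+(-1)^r\bigl(G_1 H_{s-r+t}+G_0 H_{s-r+t-1}\bigr)\\
&\qquad -(-1)^r G_{s-r}\bigl(H_{t+1}+H_{t-1}\bigr)=\bigl(G_{s+1}+G_{s-1}\bigr)H_{r+t}.
\end{split}
\end{equation*}
Finally I apply the multiplication formula $H_{n+r}+(-1)^r H_{n-r}=L_r H_n$ (Vajda~\cite[Formula~(10a)]{vajda}) with $n=s+t$ and with $n=s+t-1$: this collapses $G_1 H_{s+r+t}+(-1)^r G_1 H_{s-r+t}$ into $G_1 L_r H_{s+t}$ and $G_0 H_{s+r+t-1}+(-1)^r G_0 H_{s-r+t-1}$ into $G_0 L_r H_{s+t-1}$, so the left-hand side becomes $L_r\bigl(G_0 H_{s+t-1}+G_1 H_{s+t}\bigr)-(-1)^r G_{s-r}\bigl(H_{t+1}+H_{t-1}\bigr)$, which is exactly the asserted identity.

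I expect the recasting in the second step to be the main obstacle: the relation~($\star$) that comes straight out of the imaginary part carries the factor $B_G$ (which itself contains $\alpha$) together with a loose $1/\sqrt 5$, so it is not yet of the shape to which Proposition~\ref{prop.invariant} or the Binet averaging applies, and one must first trade each $B_G\beta^m$ for $G_1\beta^m+G_0\beta^{m-1}$ and absorb $\sqrt 5$ into $\beta^{t\pm1}$ before the $\alpha\leftrightarrow\beta$ symmetrization is legitimate. A secondary point, flagged in the Remark, is verifying that the gibonacci-function identity holds for every real $r$ and not merely for integers.
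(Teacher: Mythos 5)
Your proposal is correct and is essentially the paper's own (unstated) derivation: the paper simply says the result follows by applying the method (second component) to~\eqref{eq.rel90s5} with $r$ as the index of interest, and you have carried that out in full, including the check (required by the paper's Remark) that the gibonacci-function form holds for all real $r$. The only cosmetic difference is that you finish by invoking Vajda's formula (10a) to collapse $H_{s+t\pm 0}$-pairs into $L_rH_{s+t}$ and $L_rH_{s+t-1}$, whereas one could equivalently use $\beta^r+(-1)^r\beta^{-r}=L_r$ already at the level of~($\star$); both routes land on the stated identity.
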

In particular, setting $t=s$, $(H_j)=(G_j)$ and the use of~(\cite[Formula (10a)]{vajda}):
\begin{equation*}
G_{n + m} + (-1)^mG_{n - m}=L_mG_n,
\end{equation*}
we have
\begin{equation*}
G_0G_{2s - 1}+G_1G_{2s}=G_s(G_{s + 1} + G_{s - 1}).
\end{equation*}
Differentiating the Fibonacci function form of~\eqref{eq.esegafn}, namely,
\begin{equation*}
\begin{split}
&f(k - s + r) g(k - r)  + f(k - s - r) g(k + r)\\
&\qquad  = 2f(k - s) g(k)  + ( - 1)^{k - s + r + 1} f(r)^2 \left( {g(s + 1)  + g(s - 1) } \right),
\end{split}
\end{equation*}
with respect to $s$, taking the imaginary part and making use of~\eqref{eq.mbcae9n},~\eqref{eq.bf3c8fj},~\eqref{eq.brp6mp3} and~\eqref{eq.k32lr6u} yield the identity stated in Proposition~\eqref{prop.w1edoil}.
\begin{proposition}\label{prop.w1edoil}
If $k$, $r$, $s$ and $t$ are any integers, then
\begin{equation}
\begin{split}
&G_{k + s - r} H_{k + r + t}  + G_{k + s + r} H_{k - r + t}\\
&\qquad= 2G_{k + s} H_{k + t} \\
&\qquad\; + ( - 1)^{k + r} F_r^2 \left( {G_0 \left( {H_{s + t}  + H_{s + t - 2} } \right) + G_1 \left( {H_{s + t + 1}  + H_{s + t - 1} } \right)} \right)\\
&\qquad\;\;- ( - 1)^{k + r} F_r^2 \left( {G_{s + 1}  + G_{s - 1} } \right)\left( {H_{t + 1}  + H_{t - 1} } \right).
\end{split}
\end{equation}
\end{proposition}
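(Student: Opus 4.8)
The plan is to feed identity~\eqref{eq.esegafn} through the second-component machinery of Section~\ref{sec.additional}, taking $s$ as the index of interest. First I would pass to its function form,
\begin{equation*}
f(k - s + r)g(k - r) + f(k - s - r)g(k + r) = 2f(k - s)g(k) + (-1)^{k - s + r + 1}f(r)^2\bigl(g(s + 1) + g(s - 1)\bigr),
\end{equation*}
and check --- this is the caveat raised in the Remark before~\S\ref{sec.b1ek8kn} --- that it is a genuine identity for \emph{every} real $s$ once $k$ and $r$ are integers. A direct Binet expansion reduces this to the Fibonacci--Lucas relation $(-1)^{r}L_{2r} = 2 + 5(-1)^{r}F_r^2$, itself the combination of $L_{2r} = L_r^2 - 2(-1)^r$ and $5F_r^2 = L_r^2 - 4(-1)^r$, so the function form is safe to differentiate.

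Next I would differentiate with respect to $s$. Here $g(k - r)$, $g(k + r)$, $g(k)$ are constant; $s$ enters each $f$-argument with chain-rule factor $-1$; $\partial_s(-1)^{k - s + r + 1} = -i\pi(-1)^{k - s + r + 1}$; and $g(s \pm 1)$ fall under~\eqref{eq.brp6mp3}--\eqref{eq.k32lr6u}. Taking the imaginary part --- using~\eqref{eq.mbcae9n},~\eqref{eq.brp6mp3} to pull out the real constants $G_{k \mp r}$, $G_k$, $F_r^2$, then~\eqref{eq.bf3c8fj} for the $f$-derivatives and~\eqref{eq.k32lr6u} for the $g(s \pm 1)$-derivatives --- cancelling the common $\pi$, multiplying through by $\sqrt 5$, and rewriting $\sqrt 5\,B_G(\beta^{s+1}+\beta^{s-1}) = (G_0\alpha - G_1)(\beta^{s+1}+\beta^{s-1}) = -G_0(\beta^{s}+\beta^{s-2}) - G_1(\beta^{s+1}+\beta^{s-1})$ with the aid of $\alpha\beta = -1$, I obtain a closed $\beta$-power identity. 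After the harmless relabelling $k \mapsto k + s$ (legitimate since the identity holds for all integers) and multiplication by $\beta^{t}$ for an arbitrary integer $t$, it reads
\begin{equation*}
G_{k+s-r}\beta^{k+r+t} + G_{k+s+r}\beta^{k-r+t} = 2G_{k+s}\beta^{k+t} - \sqrt 5\,(-1)^{k+r+1}F_r^2(G_{s+1}+G_{s-1})\beta^{t} - (-1)^{k+r+1}F_r^2\bigl(G_0(\beta^{s+t}+\beta^{s+t-2}) + G_1(\beta^{s+t+1}+\beta^{s+t-1})\bigr).
\end{equation*}

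By Proposition~\ref{prop.invariant} the same statement holds with every $\beta$ replaced by $\alpha$, except that $\sqrt 5 = \alpha - \beta$ changes sign under the exchange. I would then combine the two: multiply the $\beta$-version by $B_H$, the $\alpha$-version by $A_H$, and add. Each pair $A_H\alpha^{j} + B_H\beta^{j}$ collapses to $H_j$, so the left-hand side becomes $G_{k+s-r}H_{k+r+t} + G_{k+s+r}H_{k-r+t}$, the leading right-hand term becomes $2G_{k+s}H_{k+t}$, and the $G_0,G_1$ block becomes $(-1)^{k+r}F_r^2\bigl(G_0(H_{s+t}+H_{s+t-2}) + G_1(H_{s+t+1}+H_{s+t-1})\bigr)$. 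The single term carrying a bare $\sqrt 5$ contributes $\sqrt 5\,(-1)^{k+r+1}F_r^2(G_{s+1}+G_{s-1})(A_H\alpha^{t} - B_H\beta^{t})$; here I invoke $A_H\alpha^{t} - B_H\beta^{t} = (H_{t+1}+H_{t-1})/\sqrt 5$ (which follows from $H_{t+1}+H_{t-1} = \sqrt 5(A_H\alpha^{t} - B_H\beta^{t})$, using $\alpha^2+1 = \alpha\sqrt 5$ and $\beta^2+1 = -\beta\sqrt 5$), so the $\sqrt 5$ cancels and this piece becomes $-(-1)^{k+r}F_r^2(G_{s+1}+G_{s-1})(H_{t+1}+H_{t-1})$. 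Collecting the three pieces is precisely the asserted identity.

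The differentiation and the imaginary-part substitutions are routine. The real hazards --- and what I would watch hardest --- are keeping the sign of $(-1)^{k-s+r+1}$ straight through both the differentiation (where it spawns the term with \emph{no} power of $\beta$) and the $\alpha \leftrightarrow \beta$ exchange, realising that this bare-$\sqrt 5$ term is exactly what furnishes the final $(G_{s+1}+G_{s-1})(H_{t+1}+H_{t-1})$ summand rather than an irreparable mismatch, and confirming at the outset that the function form of~\eqref{eq.esegafn} really does hold for all real $s$, since the legitimacy of everything that follows from taking imaginary parts depends on it.
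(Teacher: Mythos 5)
Your proposal is correct and follows exactly the route the paper takes (the paper's "proof" is the single sentence: differentiate the function form of~\eqref{eq.esegafn} with respect to $s$, take the imaginary part using~\eqref{eq.mbcae9n},~\eqref{eq.bf3c8fj},~\eqref{eq.brp6mp3} and~\eqref{eq.k32lr6u}, then combine the $\beta$- and $\alpha$-versions via the Binet form). You simply supply the details the paper omits --- notably the verification that the function form holds for all real $s$ and the careful handling of the bare-$\sqrt 5$ term --- and these check out.
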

In particular,
\begin{equation}
G_{k - r}G_{k + r} - G_k^2=(-1)^{k + r}F_r^2e_G.
\end{equation}
\subsubsection{Generalization of an identity obtained from an inverse tangent relation}\label{sec.n3q9dr5}
The method (second component) cannot be applied to~\eqref{eq.k9bx5vw} because~\eqref{eq.hpngjml} on page~\pageref{eq.hpngjml} is valid only for integers $k$. In order to redeem the situation, we proceed as follows:
\begin{equation*}
\begin{split}
\tan ^{ - 1} \frac{1}{{F_{2k} }} - \tan ^{ - 1} \frac{1}{{F_{2k + 2} }} &= \tan ^{ - 1} \frac{{F_{2k + 2}  - F_{2k} }}{{F_{2k} F_{2k + 2}  + 1}}\\
&= \tan ^{ - 1} \frac{{F_{2k + 1} }}{{F_{2k + 1}^2  + ( - 1)^{2k + 1}  + 1}},
\end{split}
\end{equation*}
where our refrain from setting $( - 1)^{2k + 1}=-1$ ensures that the Fibonacci function form:
\begin{equation}\label{eq.mscmyfm}
\tan ^{ - 1} \frac{1}{{f(2k) }} - \tan ^{ - 1} \frac{1}{{f(2k + 2) }} = \tan ^{ - 1} \frac{f(2k + 1)}{{f(2k + 1)^2 +( - 1)^{2k + 1} + 1}},
\end{equation}
holds for all real numbers $k$.

\bigskip

Differentiating~\eqref{eq.mscmyfm} with respect to $k$ and taking the imaginary part, we find
\begin{equation*}
\begin{split}
&\frac{1}{{F_{2k}^2  + 1}}\Im\frac{d}{{dk}}f(2k) - \frac{1}{{F_{2k + 2}^2  + 1}}\Im\frac{d}{{dk}}f(2k + 2)\\
&\qquad = \frac{1}{{F_{2k + 1}^2  + 1}}\Im\frac{d}{{dk}}f(2k + 1) - \frac{\pi }{{F_{2k + 1} \left( {F_{2k + 1}^2  + 1} \right)}},
\end{split}
\end{equation*}
which, upon use of~\eqref{eq.mbcae9n} and~\eqref{eq.bf3c8fj}, gives
\begin{equation*}
\frac{{\beta ^{2k} }}{{F_{2k}^2  + 1}} - \frac{{\beta ^{2k + 2} }}{{F_{2k + 2}^2  + 1}} = \frac{{\beta ^{2k + 1} }}{{F_{2k + 1}^2  + 1}} + \frac{{\sqrt 5 }}{{F_{2k + 1} \left( {F_{2k + 1}^2  + 1} \right)}},
\end{equation*}
that is
\begin{equation}\label{eq.dol6t1x}
\frac{{\alpha ^{r + 2} }}{{F_{2k}^2  + 1}} - \frac{{\alpha ^r }}{{F_{2k + 2}^2  + 1}} = \frac{{\alpha ^{2k + r + 2} \sqrt 5 }}{{F_{2k + 1} \left( {F_{2k + 1}^2  + 1} \right)}} - \frac{{\alpha ^{r + 1} }}{{F_{2k + 1}^2  + 1}},
\end{equation}
where $r$ is an arbitrary integer and also
\begin{equation}\label{eq.lkj26sf}
\frac{{\beta ^{r + 2} }}{{F_{2k}^2  + 1}} - \frac{{\beta ^r }}{{F_{2k + 2}^2  + 1}} = -\frac{{\beta ^{2k + r + 2} \sqrt 5 }}{{F_{2k + 1} \left( {F_{2k + 1}^2  + 1} \right)}} - \frac{{\beta ^{r + 1} }}{{F_{2k + 1}^2  + 1}}.
\end{equation}
By combining~\eqref{eq.dol6t1x} and~\eqref{eq.lkj26sf}, we have the next result.
\begin{proposition}
If $r$ and $k$ are any integers, then
\begin{equation*}
\frac{{G_{r + 2} }}{{F_{2k}^2  + 1}} - \frac{{G_r }}{{F_{2k + 2}^2  + 1}}=\frac{{G_{2k + r + 3}  + G_{2k + r + 1} }}{{F_{2k + 1} (F_{2k + 1}^2  + 1)}} - \frac{{G_{r + 1} }}{{F_{2k + 1}^2  + 1}}.
\end{equation*}
\end{proposition}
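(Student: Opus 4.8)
The plan is to obtain the stated gibonacci identity by folding the two auxiliary relations \eqref{eq.dol6t1x} and \eqref{eq.lkj26sf}, which are already established just above, into a single statement via the Binet formula \eqref{eq.i07nxs0}. Writing $G_j=A_G\alpha^j+B_G\beta^j$ with $A_G,B_G$ as in \eqref{eq.zumcdp3}, I would form the linear combination $A_G\cdot\eqref{eq.dol6t1x}+B_G\cdot\eqref{eq.lkj26sf}$ and read off the three kinds of terms. The pair $\frac{A_G\alpha^{r+2}+B_G\beta^{r+2}}{F_{2k}^2+1}$ and $\frac{A_G\alpha^{r}+B_G\beta^{r}}{F_{2k+2}^2+1}$ collapse to $\frac{G_{r+2}}{F_{2k}^2+1}$ and $\frac{G_r}{F_{2k+2}^2+1}$, and similarly $\frac{A_G\alpha^{r+1}+B_G\beta^{r+1}}{F_{2k+1}^2+1}$ collapses to $\frac{G_{r+1}}{F_{2k+1}^2+1}$; these account for three of the four terms in the claim.

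The one term that is not a bare Binet collapse is the $\sqrt5$-term. It enters \eqref{eq.dol6t1x} with a plus sign and \eqref{eq.lkj26sf} with a minus sign, so in the combination it contributes $\frac{\sqrt5\,(A_G\alpha^{2k+r+2}-B_G\beta^{2k+r+2})}{F_{2k+1}(F_{2k+1}^2+1)}$. Here I would invoke Lemma~\ref{lem.r4azw23}, which in the gibonacci specialization ($W=G$, $\tau=\alpha$, $\sigma=\beta$, $\Delta=\sqrt5$) says $A_G\alpha^{j}-B_G\beta^{j}=(G_{j+1}+G_{j-1})/\sqrt5$; taking $j=2k+r+2$ turns this term into exactly $\frac{G_{2k+r+3}+G_{2k+r+1}}{F_{2k+1}(F_{2k+1}^2+1)}$. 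Assembling the four pieces gives the asserted identity.

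For completeness one should recall why \eqref{eq.dol6t1x} and \eqref{eq.lkj26sf} hold in the first place: identity \eqref{eq.k9bx5vw} cannot be fed into the second component directly, because its Fibonacci-function form \eqref{eq.hpngjml} is valid only at integers, whereas the rewritten form \eqref{eq.mscmyfm} (with the factor $(-1)^{2k+1}$ kept unevaluated) holds for all real $k$. Differentiating \eqref{eq.mscmyfm} with respect to $k$, taking imaginary parts via the prescription \eqref{eq.mbcae9n}--\eqref{eq.bf3c8fj}, and then multiplying through by a suitable power of $\beta$ so that the arbitrary integer $r$ is introduced yields \eqref{eq.lkj26sf}; Proposition~\ref{prop.invariant} then produces the companion $\alpha$-version \eqref{eq.dol6t1x}.

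I expect the only genuine obstacle to be the sign bookkeeping around $\sqrt5=\alpha-\beta$: one must verify that the $\sqrt5$-term is indeed \emph{antisymmetric} under $\alpha\leftrightarrow\beta$, so that it combines into the difference $A_G\alpha^{j}-B_G\beta^{j}$ rather than the sum, and hence that Lemma~\ref{lem.r4azw23}, not the plain Binet formula, is the correct tool for that single term. The other three terms, and the verification that \eqref{eq.dol6t1x} and \eqref{eq.lkj26sf} are term-for-term images of one another under the exchange, are routine.
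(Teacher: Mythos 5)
Your proposal is correct and follows essentially the same route as the paper: the paper obtains \eqref{eq.dol6t1x} and \eqref{eq.lkj26sf} exactly as you describe (via the real-valued rewriting \eqref{eq.mscmyfm}, differentiation, imaginary parts, and the $\alpha\leftrightarrow\beta$ exchange of Proposition~\ref{prop.invariant}) and then simply states that the result follows ``by combining'' them according to the Binet formula. Your explicit $A_G\cdot\eqref{eq.dol6t1x}+B_G\cdot\eqref{eq.lkj26sf}$ combination, with Lemma~\ref{lem.r4azw23} handling the antisymmetric $\sqrt5$-term to produce $G_{2k+r+3}+G_{2k+r+1}$, is precisely the intended (but unwritten) computation, with the sign bookkeeping done correctly.
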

\section{Extension of the method (second component) to the Horadam sequence}\label{sec.horadam}
The Horadam sequence $(W_j) = \left(W_j(W_0,W_1; p,q)\right)$ is defined, for all integers and arbitrary real numbers $W_0$, $W_1$, $p\ne 0$ and $q\ne 0$, by the recurrence relation
\begin{equation}\label{eq.c18xwge}
W_j  = pW_{j - 1}  - qW_{j - 2}, \quad  j \ge 2,
\end{equation}
with $W_{ - j}  =\left(pW_{-j + 1} - W_{-j + 2}\right)/q$.

\bigskip

\bigskip

Associated with  $(W_j)$ are the Lucas sequences of the first kind, $(U_j(p,q))=(W_j(0,1;p,q))$, and of the second kind, $(V_j(p,q))=(W_j(2,p;p,q))$; so that 
\begin{equation}\label{eq.dv6jotq}
U_0  = 0,\,U_1  = 1,\quad U_j  = pU_{j - 1}  - qU_{j - 2}, \quad  j \ge 2,
\end{equation}
and
\begin{equation}\label{eq.ynn0ffo}
V_0  = 2,\,V_1  = p,\quad V_j  = pV_{j - 1} -  qV_{j - 2}, \quad  j \ge 2,
\end{equation}
with $U_{ - j}  =\left(pU_{-j + 1} - U_{-j + 2}\right)/q$ and $V_{ - j}  =\left(pV_{-j + 1} - V_{-j + 2}\right)/q$.

\bigskip

Note that, for convenience and since no confusion can arise, we have retained the notation $(W_j)=(W_j(W_0,W_1;p,q))$ for the Horadam sequence and $(U_j)=(U_j(p,q))$ and $(V_j)=(V_j(p,q))$ for the Lucas sequences.

\bigskip

The closed formula for $W_j(W_0,W_1;p,q)$ in the non-degenerate case, $p^2 - 4q > 0$, remains
\begin{equation}\label{BinetH}
W_j = A\tau^j + B\sigma^j, 
\end{equation}
where
\begin{equation}\label{eq.eydt9nt}
A = \frac{{W_1 - W_0 \sigma }}{{\tau  - \sigma }},\quad B = \frac{{W_0\tau  - W_1}}{{\tau  - \sigma }},
\end{equation}
with $\tau$ and $\sigma$ now given by
\begin{equation}\label{eq.egxt2r7}
\tau = \frac{p+\sqrt{p^2 - 4q}}{2}, \qquad \sigma = \frac{p-\sqrt{p^2 - 4q}}{2};
\end{equation}
so that
\begin{equation}\label{eq.p1vf48v}
\tau+\sigma=p,\quad\tau-\sigma=\sqrt{p^2 - 4q}=\Delta,\quad\mbox{and }\tau\sigma=q.
\end{equation}
In particular,
\begin{equation}\label{eq.n6mubfz}
U_j=\frac{\tau^j-\sigma^j}{\tau-\sigma},\quad V_j=\tau^j+\sigma^j.
\end{equation}
Identity~\eqref{eq.dnu2vbt} of the lemma on page~\pageref{eq.dnu2vbt} now reads
\begin{equation}\label{eq.ugio8hh}
A\tau ^j  -  B\sigma ^j  = \frac{{W_{j + 1}  - qW_{j - 1} }}{\Delta }.
\end{equation}
We define Horadam function $w(x)$ by
\begin{equation}\label{eq.c4amqna}
w(x)=A\tau^x + B\sigma^x,\quad x\in\mathbb R,
\end{equation}
where $A$ and $B$ are as defined in~\eqref{eq.eydt9nt} and $\tau$ and $\sigma$ are as given in~\eqref{eq.egxt2r7}.

\bigskip

Equation~\eqref{eq.ydfrjqv} on page~\pageref{eq.ydfrjqv} in the proof of the theorem now reads
\begin{equation}\label{eq.g453iba}
\begin{split}
\left. {\frac{d}{{dx}}w(x)} \right|_{x = j\in\mathbb Z}  &= \left( {A\tau ^j  - B\sigma ^j } \right)\ln \tau  + B\sigma ^j \ln q\\
& = \frac{{W_{j + 1}  - qW_{j - 1} }}{\Delta }\ln \tau  + B\sigma ^j \ln q,\text{by~\eqref{eq.ugio8hh}}.
\end{split}
\end{equation}
If the Horadam parameter, $q$, is \emph{negative}, then we can write~\eqref{eq.g453iba} as
\begin{equation*}
\left. {\frac{d}{{dx}}w(x)} \right|_{x = j\in\mathbb Z} = \frac{{W_{j + 1}  - qW_{j - 1} }}{\Delta }\ln \tau  + B\sigma ^j \ln {(-q)} + B\sigma ^j\ln{(-1)},
\end{equation*}
and hence, taking the imaginary part, we have
\begin{equation}\label{eq.b6o6fl9}
\Im\left( \left. {\frac{d}{{dx}}w(x)} \right|_{x = j \in \mathbb Z} \right) = B_W\sigma^j\pi(2m + 1),
\end{equation}
where $m$ is some integer and
\begin{equation}
B_W = B=\frac{{W_0\tau  - W_1}}{{\tau  - \sigma }}.
\end{equation}
Equation~\eqref{eq.b6o6fl9} is the same as~\eqref{eq.hg8x0pw} on page~\pageref{eq.hg8x0pw} in section~\ref{sec.additional}, except that the values of $\tau$ and $\sigma$ now depend on $p$ and $q$.

\bigskip

The bottomline here is that the method (second component) applies to any general non-degenerate second order sequence (Horadam sequence) $(W_j(W_0,W_1;p,q))$ whose terms are given by
\begin{equation}\label{eq.x1bi81t}
W_j=pW_{j - 1} - qW_{j - 2},
\end{equation}
provided that $q$ is negative.

\bigskip

We now describe how the method (second component) for obtaining new identities from existing ones works for the general second order (Horadam) sequence $\left(W_j(W_0,W_1; p,q)\right)$, $q<0$, whose terms are given in~\eqref{eq.x1bi81t}. The scheme is the following.
\begin{enumerate}
\item \label{step1g1} Let $k$ be a free index in the known identity. Replace each Horadam number, say $W_{h(k,\cdots)}$, with a certain differentiable function of $k$, namely, $w(h(k,\cdots))$, with $k$ now considered a variable.
\item\label{step2g2} By applying the usual rules of calculus, differentiate, with respect to $k$, through the identity obtained in step~\ref{step1g}.
\item\label{step3g3} Simplify the equation obtained in step~\ref{step2g} and take the imaginary part, using also the following prescription:
\begin{quote}
\begin{align}
&\Im \left(w(h(k,\cdots))Y\right)=W_{h(k,\cdots)}\Im Y,\label{eq.pma3e58}\\
&\Im\frac{\partial }{{\partial k}}w(h(k, \cdots )) = B_W\pi\sigma^k=\frac{W_0\tau - W_1}{\Delta}\,\pi\sigma^k\label{eq.mqupsr6};
\end{align}
where $\sigma=(p - \Delta)/2$ and $\Delta=\sqrt {p^2 - 4q} $.
\end{quote}

\end{enumerate}
In particular, for the Lucas sequences, we have
\begin{align}
&\Im \left(u(h(k,\cdots))Y\right)=U_{h(k,\cdots)}\Im Y,\label{eq.eeg6ze7}\\
&\Im\frac{\partial }{{\partial k}}u(h(k, \cdots )) = B_U\pi\sigma^k=-\frac{\pi\sigma^k}\Delta;\label{eq.o6ne7ff}
\end{align}
and
\begin{align}
&\Im \left(v(h(k,\cdots))Y\right)=V_{h(k,\cdots)}\Im Y,\label{eq.g1ginjc}\\
&\Im\frac{\partial }{{\partial k}}v(h(k, \cdots )) = B_V\pi\sigma^k=\pi\sigma^k\label{eq.q19rzh3};
\end{align}
of which the Fibonacci and Lucas relations~\eqref{eq.mbcae9n}--\eqref{eq.b00ifyn} on page~\pageref{eq.mbcae9n} are particular cases.

\bigskip

Note that Proposition~\ref{prop.invariant} on page~\pageref{prop.invariant} on the interchangeability of $\tau$ and $\sigma$, remains valid; and that the method (second component) is applicable provided the expression obtained after substituting Fibonacci, Lucas, gibonacci and Horadam functions $f(x)$, $l(x)$, $u(x)$, $v(x)$, $g(x)$ and $g(x)$ in the given identity holds for all real numbers, as noted in the remark on page~\pageref{sec.b1ek8kn} .

\bigskip

We give an example.

\bigskip

Consider the following identity (\cite[Equation (3.14)]{horadam65}):
\begin{equation*}
U_r W_{k + 1}  - qU_{r - 1} W_k  = W_{k + r}.
\end{equation*}
We write
\begin{equation*}
u(r) w(k + 1)  - qu(r - 1)w(k)  = w(k + r );
\end{equation*}
and differentiate with respect to $r$, obtaining
\begin{equation*}
\frac{d}{{dr}}u(r)\times w(k + 1) - \frac{d}{{dr}}u(r - 1)\times qw(k) = \frac{\partial }{{\partial r}}w(k + r);
\end{equation*}
so that, taking imaginary part, we find
\begin{equation*}
\Im\frac{d}{{dr}}u(r)\times W_{k + 1} - \Im\frac{d}{{dr}}u(r - 1)\times qW_k = \Im\frac{\partial }{{\partial r}}w(k + r);
\end{equation*}
which, using~\eqref{eq.o6ne7ff} and~\eqref{eq.mqupsr6} gives
\begin{equation*}
 - \sigma ^r W_{k + 1}  + q\sigma ^{r - 1} W_k  = \left( {W_0 \tau  - W_1 } \right)\sigma ^{k + r},
\end{equation*}
that is
\begin{equation}\label{eq.vlp82if}
 - \sigma ^r W_{k + 1}  + q\sigma ^{r - 1} W_k  = W_0 q\sigma ^{k + r - 1}  - W_1 \sigma ^{k + r}
\end{equation}
and also
\begin{equation}\label{eq.fyfdjx8}
 - \tau ^r W_{k + 1}  + q\tau ^{r - 1} W_k  = W_0 q\tau ^{k + r - 1}  - W_1 \tau ^{k + r}.
\end{equation}
Combining~\eqref{eq.vlp82if} and~\eqref{eq.fyfdjx8}, using the Binet formula, we have the result stated in Proposition~\ref{prop.adesbgv}.
\begin{proposition}\label{prop.adesbgv}
If $r$ and $k$ are any integers, then
\begin{equation*}
Z_r W_{k + 1} - qZ_{r - 1} W_k = W_1 Z_{k + r} - qW_0 Z_{k + r - 1},\quad q<0;
\end{equation*}
where $W_j=W_j\left(W_0,W_1;p,q\right)$ and $Z_j=Z_j\left(Z_0,Z_1;p,q\right)$ are two Horadam sequences.
\end{proposition}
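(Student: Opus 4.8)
The plan is to derive the identity by a single application of the method (second component) of Section~\ref{sec.horadam} to Horadam's recurrence relation $U_r W_{k + 1} - qU_{r - 1} W_k = W_{k + r}$, taking $r$ as the free index. First I would pass to the Horadam-function form $u(r)w(k + 1) - qu(r - 1)w(k) = w(k + r)$; since this identity carries no sign factor $(-1)^r$, it holds verbatim for every real $r$, so the term-by-term differentiation demanded by the method is legitimate. Differentiating with respect to $r$ and then taking the imaginary part, using the prescriptions \eqref{eq.pma3e58}, \eqref{eq.o6ne7ff} and \eqref{eq.mqupsr6}, gives
\[
-\sigma^r W_{k + 1} + q\sigma^{r - 1} W_k = W_0 q\sigma^{k + r - 1} - W_1 \sigma^{k + r},
\]
which is \eqref{eq.vlp82if}. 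It is exactly at this step that the hypothesis $q < 0$ enters: it permits writing $\ln q = \ln(-q) + i\pi$ with $\ln(-q)$ real, so that the only surviving imaginary contribution is the term $B_W\pi\sigma^k$ recorded in \eqref{eq.mqupsr6}.

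Next I would invoke Proposition~\ref{prop.invariant}. The identity just obtained involves only Horadam numbers together with $\tau$ and $\sigma$ and no other irrationals, so interchanging $\tau$ and $\sigma$ produces its companion \eqref{eq.fyfdjx8}, namely $-\tau^r W_{k + 1} + q\tau^{r - 1} W_k = W_0 q\tau^{k + r - 1} - W_1 \tau^{k + r}$. I would then combine the two according to the Binet formula \eqref{BinetH}, \eqref{eq.eydt9nt} for the second sequence: forming $A_Z\cdot\eqref{eq.fyfdjx8} + B_Z\cdot\eqref{eq.vlp82if}$ with $A_Z = (Z_1 - Z_0\sigma)/\Delta$ and $B_Z = (Z_0\tau - Z_1)/\Delta$, the left-hand side collapses to $-Z_r W_{k + 1} + qZ_{r - 1} W_k$ (using $A_Z\tau^r + B_Z\sigma^r = Z_r$ and likewise for index $r - 1$) and the right-hand side to $W_0 qZ_{k + r - 1} - W_1 Z_{k + r}$. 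Multiplying through by $-1$ yields $Z_r W_{k + 1} - qZ_{r - 1} W_k = W_1 Z_{k + r} - qW_0 Z_{k + r - 1}$, which is the assertion.

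I do not expect a genuine obstacle here; this is a routine instance of the machinery set up in Section~\ref{sec.horadam}, mirroring the gibonacci-case derivation of~\eqref{eq.o0dg2ly}. The only two points demanding attention are (i) confirming that the function form of Horadam's identity is valid for all real $r$ — which it is, precisely because no $(-1)^r$ occurs — so that differentiation under the method is justified, and (ii) carrying the parameter $q$ correctly through both the differentiation and the final linear combination, since it multiplies several of the terms. As a consistency check one can specialize $(Z_j)$ to the Lucas sequences $(U_j)$ and $(V_j)$, recovering in particular the known relation $W_{k + r} = W_1 U_{k + r} - qW_0 U_{k + r - 1}$ and its $(V_j)$-analogue.
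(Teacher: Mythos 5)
Your proposal is correct and follows essentially the same route as the paper: pass to the Horadam-function form of $U_r W_{k+1} - qU_{r-1}W_k = W_{k+r}$, differentiate in $r$, take the imaginary part via \eqref{eq.o6ne7ff} and \eqref{eq.mqupsr6} to get \eqref{eq.vlp82if}, obtain \eqref{eq.fyfdjx8} by the $\tau\leftrightarrow\sigma$ exchange of Proposition~\ref{prop.invariant}, and combine according to the Binet form of $(Z_j)$. Your explicit linear combination $A_Z\cdot\eqref{eq.fyfdjx8}+B_Z\cdot\eqref{eq.vlp82if}$ and your remarks on where $q<0$ and the all-real-$r$ validity enter are accurate elaborations of steps the paper leaves implicit.
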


\hrule

\end{document}